\newcommand\footnoteref[1]{\protected@xdef\@thefnmark{\ref{#1}}\@footnotemark}
\title{On the Price of Independence for Vertex Cover, Feedback Vertex Set and Odd Cycle Transversal\footnote{A number of results in this paper have appeared in extended abstracts of the proceedings of MFCS~2018~\cite{DJPPZ18} and EuroComb~2019~\cite{DJPPZ19}.}}
\titlerunning{On the Price of Independence}
\author{Konrad K. Dabrowski}{Department of Computer Science, Durham University, UK}{konrad.dabrowski@durham.ac.uk}{https://orcid.org/0000-0001-9515-6945}{Supported by EPSRC (EP/K025090/1) and the Leverhulme Trust (RPG-2016-258).}
\author{Matthew Johnson}{Department of Computer Science, Durham University, UK}{matthew.johnson2@durham.ac.uk}{https://orcid.org/0000-0002-7295-2663}{Supported by the Leverhulme Trust (RPG-2016-258).}
\author{Giacomo Paesani}{Department of Computer Science, Durham University, UK}{giacomo.paesani@durham.ac.uk}{https://orcid.org/0000-0002-2383-1339}{}
\author{Dani\"el Paulusma}{Department of Computer Science, Durham University, UK}{daniel.paulusma@durham.ac.uk}{https://orcid.org/0000-0001-5945-9287}{Supported by EPSRC (EP/K025090/1) and the Leverhulme Trust (RPG-2016-258).}
\author{Viktor Zamaraev}{Department of Computer Science, Durham University, UK}{viktor.zamaraev@durham.ac.uk}{https://orcid.org/0000-0001-5755-4141}{Supported by EPSRC (EP/P020372/1).}
\authorrunning{K.\,K. Dabrowski, M. Johnson, G. Paesani, D. Paulusma and V. Zamaraev}
\subjclass{\ccsdesc[500]{Mathematics of computing~Graph theory}}
\keywords{vertex cover, feedback vertex set, odd cycle transversal, price of independence}
\theoremstyle{plain}
\newtheorem{openproblem}{Open Problem}
\newcommand{\NP}{{\sf NP}}
\def\dist{\qopname\relax n{dist}}
\def\deg{\qopname\relax n{deg}}
\def\fvs{\qopname\relax n{fvs}}
\def\ifvs{\qopname\relax n{ifvs}}
\def\vc{\qopname\relax n{vc}}
\def\ivc{\qopname\relax n{ivc}}
\def\oct{\qopname\relax n{oct}}
\def\ioct{\qopname\relax n{ioct}}
\newcommand{\qedllncs}{}
\begin{document}

\maketitle

\begin{abstract}
Let~$\vc(G)$, $\fvs(G)$ and~$\oct(G)$, respectively, denote the size of a minimum vertex cover, minimum feedback vertex set and minimum odd cycle transversal in a graph~$G$. One can ask, when looking for these sets in a graph, how much bigger might they be if we require that they are independent; that is, what is the {\em price of independence}?  If~$G$ has a vertex cover, feedback vertex set or odd cycle transversal that is an independent set, then we let~$\ivc(G)$, $\ifvs(G)$ or~$\ioct(G)$, respectively, denote the minimum size of such a set.
Similar to a recent study on the price of connectivity (Hartinger et al. EuJC 2016), we investigate for which graphs~$H$ the values of~$\ivc(G)$, $\ifvs(G)$ and~$\ioct(G)$ are bounded in terms of~$\vc(G)$, $\fvs(G)$ and~$\oct(G)$, respectively, when the graph~$G$ belongs to the class of $H$-free graphs.
We find complete classifications for vertex cover and feedback vertex set and an almost complete classification for odd cycle transversal (subject to three non-equivalent open cases). We also investigate for which graphs~$H$ the values of~$\ivc(G)$, $\ifvs(G)$ and~$\ioct(G)$ are equal to~$\vc(G)$, $\fvs(G)$ and~$\oct(G)$, respectively, when the graph~$G$ belongs to the class of $H$-free graphs. 
We find a complete classification for vertex cover and almost complete classifications for feedback vertex set (subject to one open case) and odd cycle transversal (subject to three open cases).
\end{abstract}

\section{Introduction}\label{sec:intro}

A transversal~$\tau(\pi)$ of a graph~$G$ is a set of vertices that transverse (intersect) all subsets of~$G$ that have some specific property~$\pi$.
The default aim is to find a transversal~$\tau(\pi)$ that has minimum size, but one may also add further conditions, such as demanding that the transversal must induce a connected subgraph or must be an {\it independent set} (set of pairwise non-adjacent vertices).
In this paper we focus on the latter condition and consider three classical and well-studied transversals obtained by specifying~$\pi$.

Let~$G$ be a graph.
We define the following three transversals of the vertex set~$V(G)$ of~$G$.
A set $S \subseteq V(G)$ is a \emph{vertex cover} if for every edge $uv \in E(G)$, at least one of~$u$ and~$v$ is in~$S$, or, equivalently, if the graph $G-S$ contains no edges.
A set $S \subseteq V(G)$ is a \emph{feedback vertex set} if for every cycle in~$G$, at least one vertex of the cycle is in~$S$, or, equivalently, if the graph $G-S$ is a forest.
A cycle is {\it odd} if it has an odd number of vertices.
A graph is {\it bipartite} if its vertex set can be partitioned into at most two independent sets.
A set $S \subseteq V(G)$ is an \emph{odd cycle transversal} if for every odd cycle in~$G$, at least one vertex of the cycle is in~$S$, or, equivalently, if $G-S$ is bipartite.
For each of these transversals, one usually wishes to investigate how small they can be and there is a vast research literature on this topic.

As mentioned, one can add an additional constraint: require the transversal to be an \emph{independent set}, that is, a set of vertices that are pairwise non-adjacent.
It may be the case that no such transversal exists under this constraint.
For example, a graph~$G$ has an independent vertex cover if and only if~$G$ is bipartite.
We are interested in the following research question:

\begin{quote}
{\em How is the minimum size of a transversal in a graph affected by adding the requirement that the transversal is independent?}
\end{quote}
Of course, this question can be interpreted in many ways.
For example, one might ask about the computational complexity of finding the transversals.
In this paper, we focus on the following: for the three transversals introduced above, is the size of a smallest possible independent transversal (assuming one exists) bounded in terms of the minimum size of a transversal?
That is, one might say, what is the \emph{price of independence}?

To the best of our knowledge, the term price of independence was first used by Camby~\cite{Camby17} in a recent unpublished manuscript.
She considered dominating sets of graphs (sets of vertices such that every vertex outside the set has a neighbour in the set).
As she acknowledged, though first to coin the term, she was building on past work.
In fact, Camby and her co-author Plein had given a forbidden induced subgraph characterization of those graphs~$G$ for which, for every induced subgraph of~$G$, there are minimum size dominating sets that are already independent~\cite{CP17}, and there are a number of further papers on the topic of the price of independence for dominating sets (see the discussion in~\cite{Camby17}).

We observe that this incipient work on the price of independence is a natural companion to recent work on the \emph{price of connectivity}, investigating the relationship between minimum size transversals and minimum size connected transversals (which, in contrast to independent transversals, will always exist for the transversals we consider, assuming the input graph is connected).
This work began with the work of Cardinal and Levy in their 2010 paper~\cite{CL10} and has since been taken in several directions; see, for example,~\cite{BHKP17,Ca19,CCFS14,CS14, CHJMP18, GS09,HJMP16}.

In this paper, as we broaden the study of the price of independence by investigating the three transversals defined above, we will concentrate on classes of graphs defined by a single forbidden induced subgraph~$H$, just as was done for the price of connectivity~\cite{BHKP17,HJMP16}.
That is, for a graph~$H$, we ask what, for a given type of transversal, is the price of independence in the class of $H$-free graphs?
The ultimate aim in each case is to find a dichotomy that allows us to say, given~$H$, whether or not the size of a minimum size independent transversal can be bounded in terms of the size of a minimum transversal.
We briefly give some necessary definitions and notation before presenting our results.

A {\em colouring} of a graph~$G$ is an assignment of positive integers (called {\em colours}) to the vertices of~$G$ such that if two vertices are adjacent, then they are assigned different colours.
A graph is {\em $k$-colourable} if there is a colouring that only uses colours from the set $\{1,\ldots,k\}$.
Equivalently, a graph is $k$-colourable if we can partition its vertex set into~$k$ (possibly empty) independent sets (called {\em colour classes} or {\em partition classes}).
For $s,t \geq 0$, let~$K_{s,t}$ denote the complete bipartite graph with partition classes of size~$s$ and~$t$, respectively (note that~$K_{s,t}$ is edgeless if $s=0$ or $t=0$).
For $r \geq 0$, the graph~$K_{1,r}$ is also called the $(r+\nobreak 1)$-vertex {\em star}; if $r \geq 2$ we say that the vertex in the partition class of size~$1$ is the {\em central vertex} of this star.
For $r \geq 1$, let~$K_{1,r}^+$ denote the graph obtained from~$K_{1,r}$ by subdividing one edge.
For $n \geq 1$, let~$P_n$ and~$K_n$ denote the path and complete graph on~$n$ vertices, respectively.
For $n \geq 3$, let~$C_n$ denote the cycle on~$n$ vertices.
The {\em disjoint union}~$G+\nobreak H$ of two vertex-disjoint graphs~$G$ and~$H$ is the graph with vertex set $V(G)\cup V(H)$ and edge set $E(G)\cup E(H)$.
We denote the disjoint union of~$r$ copies of a graph~$G$ by~$rG$.

\subparagraph*{The Price of Independence for Vertex Cover}
As mentioned above, a graph has an independent vertex cover if and only if it is bipartite.
For a bipartite graph~$G$, let~$\vc(G)$ denote the size of a minimum vertex cover, and let~$\ivc(G)$ denote the size of a minimum independent vertex cover.
Let~${\cal X}$ be a class of bipartite graphs.
Then~${\cal X}$ is {\it $\ivc$-bounded} if there exists a function $f:\mathbb{Z}_{\geq 0} \to \mathbb{Z}_{\geq 0}$ such that $\ivc(G) \leq f(\vc(G))$ for every $G \in {\cal X}$, and~${\cal X}$ is {\it $\ivc$-unbounded} if no such function exists, that is, if there is a~$k$ such that for every $s \geq 0$ there is a graph~$G$ in~${\cal X}$ with $\vc(G) \leq k$, but $\ivc(G) \geq s$.
Moreover,~${\cal X}$ is {\it $\ivc$-identical} if $\ivc(G)=\vc(G)$ for every $G \in {\cal X}$.

In our first two results, proven in Section~\ref{sec:vc}, we determine for every graph~$H$, whether or not the class of $H$-free bipartite graphs is $\ivc$-bounded or $\ivc$-identical, respectively.

\begin{theorem}\label{thm:indep-vc}
Let~$H$ be a graph.
The class of $H$-free bipartite graphs is $\ivc$-bounded if and only if~$H$ is an induced subgraph of $K_{1,r}+\nobreak rP_1$ or~$K_{1,r}^+$ for some~$r\geq 1$.
\end{theorem}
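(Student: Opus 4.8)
The plan is to prove the two directions separately. Throughout, let $\mathcal{S}$ denote the set of all graphs that are an induced subgraph of $K_{1,r}+rP_1$ or of $K_{1,r}^+$ for some $r\geq1$. A short case analysis shows that $\mathcal{S}$ is exactly the set of graphs of the form $K_{1,s}+tP_1$ $(s,t\geq0)$ together with the graphs $K_{1,s}^+$ $(s\geq1)$; I would establish this at the outset, as it is used throughout.

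\emph{The ``if'' direction (boundedness).} Suppose $H\in\mathcal{S}$. Since $H$ is then an induced subgraph of $K_{1,r}+rP_1$ or of $K_{1,r}^+$ for some $r$, the class of $H$-free bipartite graphs is contained in the class of $(K_{1,r}+rP_1)$-free bipartite graphs, respectively in the class of $K_{1,r}^+$-free bipartite graphs, so it suffices to prove that each of these two classes is $\ivc$-bounded. As $\ivc$ and $\vc$ are additive over connected components and at most $\vc(G)$ components of $G$ contain an edge, it suffices to bound $\ivc$ on connected graphs. For a connected bipartite graph $G$ with bipartition $(A,B)$ the only independent vertex covers are $A$ and $B$ (if $S$ is an independent vertex cover then $V(G)\setminus S$ is also independent, so $(S,V(G)\setminus S)$ is a proper $2$-colouring, which is unique for connected $G$), hence $\ivc(G)=\min(|A|,|B|)$. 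Fix a minimum vertex cover $C$, put $k=\vc(G)=|C|$, and let $C_A=C\cap A$, $C_B=C\cap B$, $I_A=A\setminus C_A$, $I_B=B\setminus C_B$; then $I_A\cup I_B$ is independent, every vertex of $I_A$ has all its neighbours in $C_B$, every vertex of $I_B$ has all its neighbours in $C_A$, and $|C_A|,|C_B|\leq k$. Thus $\min(|A|,|B|)\leq k+\min(|I_A|,|I_B|)$, so it is enough to bound $\min(|I_A|,|I_B|)$; say $|I_A|\leq|I_B|$. If $|I_A|>r\cdot 2^{k}$ then, by the pigeonhole principle, $r$ vertices $a_1,\dots,a_r$ of $I_A$ share a common non-empty neighbourhood $N^{*}\subseteq C_B$; fix $b^{*}\in N^{*}$. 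In the $(K_{1,r}+rP_1)$-free case, any $r$ vertices of $B\setminus N^{*}$ would be non-adjacent to $b^{*}$ and to $a_1,\dots,a_r$ and hence complete an induced $K_{1,r}+rP_1$, so $|B|<|N^{*}|+r\leq k+r$, contradicting the size of $I_A$. In the $K_{1,r}^+$-free case, the same star on $b^{*},a_1,\dots,a_r$ is present, and one produces the subdivided ``arm'' of $K_{1,r}^+$ by finding a neighbour $a$ of $b^{*}$ outside $\{a_1,\dots,a_r\}$ and then a neighbour $w$ of $a$ outside $N^{*}$ — both forced to exist by connectedness (e.g.\ by following a shortest path from $b^{*}$ into the rest of $G$) — so that $b^{*}$ together with $a_1,\dots,a_{r-1}$ and the arm $b^{*}aw$ induces $K_{1,r}^+$, again a contradiction. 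Either way $\ivc(G)$ is bounded in terms of $k$ and $r$.

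\emph{The ``only if'' direction (unboundedness).} Suppose $H\notin\mathcal{S}$; I would produce, for each such $H$, a family of $H$-free bipartite graphs with bounded vertex cover number but unbounded independent vertex cover number. For $n\geq1$ let $D_n$ be the double star obtained from an edge $xy$ by attaching $n$ pendant leaves to $x$ and $n$ pendant leaves to $y$; a short check gives $\vc(D_n)=2$, $\ivc(D_n)=n+1$, that $D_n$ is $2K_2$-free, and that the induced subgraphs of $D_n$ are precisely the generalised double stars (obtained from an edge $xy$ by attaching $p$ leaves to $x$ and $q$ leaves to $y$) and the graphs $K_{1,p}+qP_1$. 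Hence if $H$ is not a generalised double star then, since $H\notin\mathcal{S}$ also rules out $H=K_{1,p}+qP_1$, $H$ is an induced subgraph of no $D_n$, and $\{D_n\}_{n\geq1}$ is the desired family (this covers, in particular, every non-bipartite $H$, every $H$ with an induced cycle, and every $H$ with an induced $2K_2$). So assume $H$ is a generalised double star with parameters $p,q$. A generalised double star lies in $\mathcal{S}$ precisely when $\min(p,q)\leq1$ (it is then a star or some $K_{1,s}^+$), so $H\notin\mathcal{S}$ forces $p,q\geq2$, and then $H$ contains an induced $S_{2,2}$ (the generalised double star with $p=q=2$). For such $H$, for $n\geq1$ let $G_n$ be the tree consisting of a path $a\,b'\,a'\,b$ with $n$ pendant leaves attached to $a$ and $n$ pendant leaves attached to $b$. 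Then $\vc(G_n)=3$ ($\{a,a',b\}$ is a vertex cover and $G_n$ has a matching of size $3$), $\ivc(G_n)=n+2$, and $G_n$ is $S_{2,2}$-free: its only vertices of degree at least $3$ are $a$ and $b$, which are non-adjacent, so no edge of $G_n$ can be the central edge of an induced $S_{2,2}$. Since $H$ contains an induced $S_{2,2}$, the graphs $G_n$ are all $H$-free, and $\{G_n\}_{n\geq1}$ is the desired family.

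The step I expect to be the main obstacle is the $K_{1,r}^+$-free subcase of the boundedness direction. Unlike the $(K_{1,r}+rP_1)$-free case, where a large part of the bipartition immediately yields the forbidden subgraph using only non-neighbours from the opposite part, here one must also exhibit the subdivided arm of $K_{1,r}^+$, i.e.\ a vertex guaranteed to be non-adjacent to the bulk of the large star; obtaining it cleanly requires a connectedness/shortest-path argument and some case distinctions. A secondary point needing care is the verification that the induced subgraphs of a double star are exactly those listed, as this is what makes the three cases ($H\in\mathcal{S}$; $H$ not a generalised double star; $H$ a generalised double star outside $\mathcal{S}$) exhaustive.
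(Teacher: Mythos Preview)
Your ``only if'' direction is essentially the paper's argument: your double stars $D_n$ and your graphs $G_n$ are precisely the paper's families $D_s^1$ and $D_s^2$, and your case split (is $H$ a generalised double star or not?) is a reorganisation of the paper's reasoning that $H\subseteq_i D_s^1$ forces $H$ to lie in $\mathcal{S}$ or to contain $D_{2,2}$, while $D_s^2$ is $D_{2,2}$-free.

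Your ``if'' direction, by contrast, takes a genuinely different route. The paper proves explicit polynomial bounds: for $(K_{1,r}+sP_1)$-free bipartite $G$ it obtains $\ivc(G)\le r\cdot\vc(G)+rs$ via a structural dichotomy (either every degree is below $r$, or almost every vertex on each side is nearly complete to the other), and for $K_{1,r}^+$-free $G$ it obtains $\ivc(G)\le(r-1)\vc(G)^2$ by combining the observation that two vertices on the same side at distance~$2$ have degrees differing by at most $r-2$ with the diameter bound coming from $G$ being $P_{2\vc(G)+2}$-free. Your pigeonhole-then-exhibit-the-forbidden-subgraph approach is more direct and avoids these structural lemmas, at the cost of exponential rather than polynomial bounds; for boundedness alone this is immaterial.

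There is, however, a genuine gap in your $K_{1,r}^+$ case, at exactly the spot you flagged. Having fixed $b^*\in N^*$, you assert that connectedness yields a neighbour $a$ of $b^*$ outside $\{a_1,\dots,a_r\}$ with a neighbour $w\notin N^*$. This need not hold for that particular $b^*$: every neighbour of $b^*$ might have its entire neighbourhood inside $N^*$ (so your shortest path out of $b^*$ would first hop to some other vertex of $N^*$ before escaping). What \emph{is} true is that some $b'\in N^*$ has such a neighbour $a$: otherwise every vertex at distance~$2$ from any point of $N^*$ lies again in $N^*$, whence by connectedness $B\subseteq N^*\subseteq C_B$, contradicting $|B|\ge|I_B|\ge|I_A|>r\cdot 2^k\ge k\ge|C_B|$. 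Since every vertex of $N^*$ is adjacent to all of $a_1,\dots,a_r$, you may use $b'$ as the centre instead, and then $\{b',a_1,\dots,a_{r-1},a,w\}$ induces $K_{1,r}^+$ as you intended. (Equivalently: on a shortest path from $b^*$ to $B\setminus N^*$, take the \emph{last} vertex lying in $N^*$ as the centre.) With this correction your argument is complete.
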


\begin{theorem}\label{t-vci}
Let~$H$ be a graph.
The class of $H$-free bipartite graphs is $\ivc$-identical if and only if~$H$ is an induced subgraph of~$K_{1,3}^+$ or~$2P_1+\nobreak P_3$.
\end{theorem}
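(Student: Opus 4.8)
Since an independent vertex cover is in particular a vertex cover, $\ivc(G)\ge\vc(G)$ always holds, so the statement is about when equality is forced. The plan is to first reformulate. Both $\ivc$ and $\vc$ are additive over connected components, so it suffices to treat a connected bipartite graph $C$ with bipartition classes $A,B$ where $|A|\le|B|$. There, an independent vertex cover, having independent complement, is one of the two (unique) bipartition classes, so $\ivc(C)=|A|$; and by K\"onig's theorem $\vc(C)$ equals the maximum matching size, which is at most $|A|$, with equality if and only if some matching saturates $A$, that is (Hall's theorem) if and only if $|N(A')|\ge|A'|$ for every $A'\subseteq A$. Call such a $C$ \emph{bad} if this fails. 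Then the class of $H$-free bipartite graphs is $\ivc$-identical if and only if every bad bipartite graph contains an induced copy of $H$.

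For the ``if'' direction, since being $K_{1,3}^+$-free (resp.\ $2P_1+P_3$-free) implies being $H$-free for every induced subgraph $H$ of $K_{1,3}^+$ (resp.\ $2P_1+P_3$), it is enough to show that every bad connected bipartite $G$ contains an induced $K_{1,3}^+$ \emph{and} an induced $2P_1+P_3$. I would fix $(A,B)$ with $|A|\le|B|$ and take a minimal $A'\subseteq A$ with $|N(A')|<|A'|$; standard arguments give $|N(A')|=|A'|-1$, that $G[A'\cup N(A')]$ is connected, that $A'\subsetneq A$ and $|A'|\ge2$, and that $G[A'\cup N(A')]-a$ has a perfect matching for every $a\in A'$. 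If $|A'|=2$, say $A'=\{a_1,a_2\}$ with common unique neighbour $b_1$, then (using connectedness and $|B|\ge3$) $b_1$ has a non-leaf neighbour $a^*\notin\{a_1,a_2\}$ with a neighbour $b^*\ne b_1$, so $\{b_1,a_1,a_2,a^*,b^*\}$ induces $K_{1,3}^+$; and $a_1,a_2$, being leaves at $b_1$, together with an induced $P_3$ lying in a component of $G-b_1$ on at least three vertices (one exists, else $|A|>|B|$) induce a $2P_1+P_3$. The case $|A'|\ge3$ is where the real work lies: one must either extract these two configurations directly from $G[A'\cup N(A')]$ and its near-perfect matchings, or first obtain structural descriptions of the two graph classes restricted to connected bipartite graphs (e.g.\ for $K_{1,3}^+$-free: a complete bipartite graph with a matching deleted, a path, or an even cycle; for $2P_1+P_3$-free: a graph all but at most one of whose vertices lie in the closed neighbourhood of an induced $P_3$) and then verify Hall's condition on the smaller class in each case.

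For the ``only if'' direction, suppose $H$ is not an induced subgraph of $K_{1,3}^+$ or of $2P_1+P_3$; I would exhibit a bad $H$-free bipartite graph. The key gadget is the double star $S$: an edge with two leaves attached to each of its endpoints. Both bipartition classes of $S$ have size $3$ while its maximum matching has size $2$, so $S$ is bad; moreover its only two $5$-vertex induced subgraphs are $K_{1,3}^+$ and $2P_1+P_3$, so the proper induced subgraphs of $S$ are exactly the induced subgraphs of $K_{1,3}^+$ together with those of $2P_1+P_3$. Hence $S$ itself is $H$-free whenever $H$ is non-bipartite, has at least $7$ vertices, has exactly $6$ vertices and is not $S$, or has at most $5$ vertices (in the last case because then $H$, not being an induced subgraph of $K_{1,3}^+$ or $2P_1+P_3$, is not an induced subgraph of $S$ at all). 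The only remaining case is $H=S$, which I would settle with the graph $G^*$ obtained from $S$ by subdividing its central edge twice: $G^*$ has $8$ vertices, bipartition classes of equal size $4$, is bad (the two leaves at a centre still violate Hall's condition), and has no induced $S$ since its only two vertices of degree at least $3$ are now non-adjacent.

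The step I expect to dominate the work is the case $|A'|\ge3$ of the ``if'' direction: showing that a minimal Hall-violator of size at least $3$, together with the way the rest of $G$ attaches to it, still forces both an induced $K_{1,3}^+$ and an induced $2P_1+P_3$, or equivalently pinning down the structure of $K_{1,3}^+$-free and of $2P_1+P_3$-free bipartite graphs precisely enough. The reformulation via K\"onig's and Hall's theorems and the whole ``only if'' direction are routine once the gadgets $S$ and $G^*$ are identified.
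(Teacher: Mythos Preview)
Your K\"onig/Hall reformulation is correct, and your ``only if'' direction is complete. The paper's route there is slightly different: it first invokes the boundedness dichotomy (Theorem~\ref{thm:indep-vc}) to conclude that any relevant~$H$ is an induced subgraph of some $K_{1,r}+rP_1$ or~$K_{1,r}^+$, so if~$H$ is not an induced subgraph of~$K_{1,3}^+$ or $2P_1+P_3$ it must contain one of $K_{1,4}$, $K_{1,3}+P_1$, $3P_1+P_2$, $5P_1$; then the single gadget~$D_{2,2}$ suffices. Your argument avoids this dependency at the cost of a second gadget~$G^*$ for the case $H=D_{2,2}$; both are fine.

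For the ``if'' direction, your plan in the $K_{1,3}^+$ case via a structural lemma is exactly what the paper does: it quotes Alekseev's result that every connected $K_{1,3}^+$-free bipartite graph is a path, an even cycle, or a complete bipartite graph minus a matching, and then checks $\ivc=\vc$ for each. For $2P_1+P_3$, however, the paper does \emph{not} go through Hall violators or a structure theorem. It takes a minimum vertex cover~$S$ with the fewest edges inside~$G[S]$; if~$S$ contains adjacent $x,y$ with private neighbour sets $I_x,I_y$ in $V(G)\setminus S$, then $(2P_1+P_3)$-freeness forces $|I_x|\le 1$ or $|I_y|\le 1$, and swapping~$x$ for its unique private neighbour produces a minimum vertex cover with strictly fewer internal edges. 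A short endgame then finishes. This is cleaner than the case analysis on~$|A'|$ you outline, and your proposed ``structural description'' for $(2P_1+P_3)$-free bipartite graphs is only a necessary condition (it is what $(2P_1+P_3)$-freeness says about \emph{each} induced~$P_3$), not a usable structure theorem. Your Hall-based approach can be pushed through, but you should expect the $|A'|\ge 3$ case for $2P_1+P_3$ to be messier than the paper's swap argument; consider switching to that argument for this half while keeping your Hall set-up and your treatment of~$K_{1,3}^+$.
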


\subparagraph*{The Price of Independence for Feedback Vertex Set}
A graph has an independent feedback vertex set if and only if its vertex set can be partitioned into an independent set and a set of vertices that induces a forest; graphs that have such a partition are said to be \emph{near-bipartite}.
In fact, minimum size independent feedback vertex sets have been the subject of much research from a computational perspective: to find such a set is \NP-hard in general, but there are fixed-parameter tractable algorithms and polynomial-time algorithms for certain graph classes; we refer to~\cite{BDFJP17} for further details.
For a near-bipartite graph~$G$, let~$\fvs(G)$ denote the size of a minimum feedback vertex set, and let~$\ifvs(G)$ denote the size of a minimum independent feedback vertex set.
Given a class~${\cal X}$ of near-bipartite graphs, we say that~${\cal X}$ is {\it $\ifvs$-bounded} if there is a function $f:\mathbb{Z}_{\geq 0} \to \mathbb{Z}_{\geq 0}$ such that $\ifvs(G) \leq f(\fvs(G))$ for every $G \in {\cal X}$ and {\it $\ifvs$-unbounded} otherwise.
Moreover, a class~${\cal X}$ of near-bipartite graphs is {\it $\ifvs$-identical} if $\ifvs(G) = \fvs(G)$ for every $G \in {\cal X}$.

In our next two results, proven in Section~\ref{sec:fvs}, we almost completely determine for every graph~$H$, whether or not the class of $H$-free near-bipartite graphs is $\ifvs$-bounded or $\ifvs$-identical, respectively; the only open case left is determining whether the class of $K_{1,3}$-free near-bipartite graphs is $\ifvs$-identical.

\begin{theorem}\label{thm:indep-fvs}
Let~$H$ be a graph.
The class of $H$-free near-bipartite graphs is $\ifvs$-bounded if and only if~$H$ is isomorphic to $P_1+\nobreak P_2$, a star or an edgeless graph.
\end{theorem}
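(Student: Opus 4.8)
The plan is to prove both implications of Theorem~\ref{thm:indep-fvs}, reducing in each direction to a finite list of graphs~$H$.

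\emph{The ``if'' direction.} If $H'$ is an induced subgraph of~$H$ then the class of $H'$-free graphs is contained in the class of $H$-free graphs, and $rP_1$ is an induced subgraph of~$K_{1,r}$, so it suffices to treat $H=K_{1,r}$ and $H=P_1+P_2$. For $H=P_1+P_2$ I would use that a graph is $(P_1+P_2)$-free if and only if its complement is $P_3$-free, i.e.\ if and only if it is complete multipartite; a short case analysis shows the near-bipartite complete multipartite graphs are exactly the complete bipartite graphs and the graphs~$K_{1,b,c}$, and for all of these $\ifvs(G)=\fvs(G)$ by direct computation, so this subclass is even $\ifvs$-identical. For $H=K_{1,r}$ I would take a minimum feedback vertex set~$S$, so $T:=G-S$ is a forest; since neighbourhoods in a forest are independent, $T$ has maximum degree at most~$r-1$, and since each $N_T(s)$ induces a bipartite graph it has an independent set of size at least $|N_T(s)|/2$, which together with~$s$ induces a star---forcing $|N_T(s)|\le 2(r-1)$. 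Hence $G$ has maximum degree bounded in terms of $\fvs(G)$ and~$r$, and $U:=S\cup N_T(S)$ is a feedback vertex set of bounded size with $G-U$ a forest. Deleting all vertices on no cycle (which preserves $\fvs$ and $\ifvs$) and suppressing the degree-$2$ vertices of what remains gives a multigraph~$M$ with $\fvs(M)=\fvs(G)$, with minimum degree at least~$3$ apart from its components that are cycles, and with bounded maximum degree; these constraints bound $|V(M)|$ and $|E(M)|$ in terms of $\fvs(G)$ and~$r$. A bounded independent feedback vertex set is then built from a deep interior vertex of each long suppressed edge (pairwise non-adjacent and avoiding the branch vertices), an independent feedback vertex set of the bounded-size graph induced by the branch vertices and the short suppressed edges (near-bipartite, since near-bipartiteness is hereditary), and one vertex from each component that is a cycle.

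\emph{The ``only if'' direction.} Call~$H$ \emph{bad} if it is not isomorphic to $P_1+P_2$, a star, or an edgeless graph. Badness is preserved under taking induced supergraphs, so it is enough to find, for each \emph{minimal} bad graph~$H_0$, a family of $H_0$-free near-bipartite graphs with bounded $\fvs$ but unbounded $\ifvs$ (such a family then also works for every bad~$H$ with $H_0\le H$). A short enumeration shows the minimal bad graphs are exactly $K_3$, $C_4$, $2P_2$, $P_3+P_1$, $K_2+2P_1$ and~$P_4$. I would handle them with three families, each built around a small cheap feedback vertex set of ``hub'' vertices that is unavoidably non-independent: \textup{(i)}~the join $2P_2\vee\overline{K_s}$ of two disjoint edges and $s$ isolated vertices, which is a cograph---hence $P_4$-free---and is also $(P_3+P_1)$-free and $(K_2+2P_1)$-free, is near-bipartite (delete the $s$ isolated vertices), and satisfies $\fvs=3$, $\ifvs=s$; \textup{(ii)}~the graph~$T_s$ obtained from a triangle by attaching, to each of its three edges, $s$ further triangles sharing that edge, which is $2P_2$-free and $C_4$-free (every edge meets the central triangle, and no induced cycle of length at least~$4$ can use a degree-$2$ apex), is near-bipartite, and satisfies $\fvs=2$, $\ifvs=s+1$; and \textup{(iii)}~the bipartite graph~$B_s$ consisting of an edge~$pq$ together with $s$ copies of~$C_4$ through~$p$ and $s$ through~$q$, pairwise sharing only the relevant hub, which is triangle-free---hence $K_3$-free---and satisfies $\fvs=2$, $\ifvs=s+1$. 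In each family the feedback-vertex-set value is witnessed by deleting the (at most three) hubs, whereas an \emph{independent} feedback vertex set can contain at most one hub and so is forced to use one vertex on almost every gadget; confirming the stated values and the forbidden-subgraph properties is a finite check.

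\emph{The main obstacle} is the $\ifvs$-bound in the $K_{1,r}$-free case: upgrading ``bounded feedback vertex set plus bounded maximum degree'' to ``bounded independent feedback vertex set''. This does require the degree bound (the families above have bounded $\fvs$ but unbounded $\ifvs$), and the delicate points are handling the long degree-$2$ paths left behind by a bounded feedback vertex set and checking that the bounded-size graph one reduces to is still near-bipartite. On the negative side the difficulty is not the (finite) verification but the \emph{choice} of families: for a dense bad graph such as $K_2+2P_1$ the natural candidates, complete multipartite graphs, are $(K_2+2P_1)$-free yet have $\ifvs=\fvs$, so a fundamentally different construction is needed.
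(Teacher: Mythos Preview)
Your proposal is correct, but the $K_{1,r}$-free case takes a substantially more complicated route than the paper's. The paper exploits the near-bipartite $3$-colouring $(V_1,V_2,V_3)$ directly: since each $V_i$ is independent, $K_{1,r}$-freeness forces every vertex to have at most $r-1$ neighbours in each other colour class, hence degree at most $2(r-1)$ --- a bound \emph{independent of $\fvs(G)$}. Then, starting from a minimum feedback vertex set~$S$, each $u\in S$ has at most $2r-2$ neighbours in the forest $G-S$, so $G[(V(G)\setminus S)\cup\{u\}]$ contains at most $\binom{2r-2}{2}$ induced cycles; since every cycle meets~$V_1$, each vertex of $S\cap(V_2\cup V_3)$ can be swapped for at most $\binom{2r-2}{2}$ vertices of~$V_1$, and iterating pushes the feedback vertex set entirely into~$V_1$. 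This gives the explicit linear bound $\ifvs(G)\le(2r^2-5r+3)\fvs(G)$. Your suppression-to-a-bounded-multigraph argument also works (and the set $U=S\cup N_T(S)$ you introduce is not actually used in it), but because your degree bound depends on $\fvs(G)$, the resulting function of $\fvs(G)$ is considerably messier and not obviously linear.

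For the ``only if'' direction your minimal-bad-graph enumeration is a clean reorganisation of essentially the same content. Your family~(i), $2P_2\vee\overline{K_s}$, is exactly the paper's graph~$T_s'$; the paper uses it only to exclude~$P_4$, whereas you correctly observe it is simultaneously $(P_1+P_3)$-free and $(2P_1+P_2)$-free, so one family does the work of two. Your triangle-book and $C_4$-book constructions differ from the paper's families ($P_4\vee\overline{K_s}$ for $P_1+P_3$, $2P_1+P_2$, $2P_2$, and the long-odd-cycle graphs~$S_s^r$ for all~$C_k$) but achieve the same ends; the paper's route has the side benefit that the single family~$S_s^r$ also yields the analogous $\ioct$-unboundedness lemma used later.
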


\begin{theorem}\label{t-fvsi}
Let~$H$ be a graph different from~$K_{1,3}$.
The class of $H$-free near-bipartite graphs is $\ifvs$-identical if and only if~$H$ is a (not necessarily induced) subgraph of~$P_3$.
\end{theorem}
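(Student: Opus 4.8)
The plan is to prove the two implications of the equivalence separately.

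For the ``if'' direction, recall that whenever $H'$ is an induced subgraph of $H''$, every $H'$-free graph is $H''$-free, and that every subgraph of $P_3$ is an induced subgraph of one of $P_3$, $P_1+P_2$ and $3P_1$; it therefore suffices to prove that the classes of $P_3$-free, of $(P_1+P_2)$-free and of $3P_1$-free near-bipartite graphs are each $\ifvs$-identical. A $P_3$-free graph is a disjoint union of complete graphs; as a disjoint union of graphs is near-bipartite exactly when each component is, and $K_n$ is near-bipartite exactly when $n\le 3$, a $P_3$-free near-bipartite graph is a disjoint union of copies of $K_1$, $K_2$ and $K_3$, so taking one vertex from each $K_3$-component yields an independent feedback vertex set of size $\fvs(G)$. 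A $3P_1$-free graph has no three pairwise non-adjacent vertices, so the independent part $I$ of any near-bipartite partition $(I,F)$ of $G$ has $|I|\le 2$ and hence $\fvs(G)\le 2$: if $\fvs(G)\le 1$ a minimum feedback vertex set is trivially independent, and if $\fvs(G)=2$ then $I$ is itself an independent feedback vertex set of size~$2$. Finally, a graph is $(P_1+P_2)$-free if and only if it is complete multipartite (equivalently, its complement is a disjoint union of complete graphs); since a graph containing $K_4$ as a subgraph is never near-bipartite, a $(P_1+P_2)$-free near-bipartite graph has at most three parts, and if it has exactly three parts then the smallest has size~$1$, because two parts of size at least~$2$ would induce a $4$-cycle inside the forest part of any near-bipartite partition. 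A short case analysis, distinguishing $G=K_{s,t}$ from $G=K_{1,b,c}$ with $b\le c$ (and, in the latter, whether or not the vertex of the size-$1$ part is deleted), shows that a minimum feedback vertex set can always be chosen to be a smallest part or a subset of one; in particular $\fvs(K_{1,b,c})=b$, realised by deleting a smallest part. As a part of a complete multipartite graph is an independent set, this gives $\ifvs(G)=\fvs(G)$ in all three cases.

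For the ``only if'' direction, let $H\ne K_{1,3}$ be a graph that is not a subgraph of $P_3$. Since every $\ifvs$-identical class is $\ifvs$-bounded, Theorem~\ref{thm:indep-fvs} already settles every $H$ that is not isomorphic to $P_1+P_2$, to a star, or to an edgeless graph. As $P_1+P_2$, every star $K_{1,r}$ with $r\le 2$, and every edgeless graph $rP_1$ with $r\le 3$ is a subgraph of $P_3$, the only graphs left to handle are the stars $K_{1,r}$ and the edgeless graphs $rP_1$ with $r\ge 4$. Because $K_{1,4}$ is an induced subgraph of $K_{1,r}$ and $4P_1$ is an induced subgraph of $rP_1$ for every $r\ge 4$, it suffices to exhibit a single near-bipartite graph that is simultaneously $K_{1,4}$-free and $4P_1$-free and has $\ifvs>\fvs$. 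I would take the $7$-vertex graph $G$ obtained from the disjoint union of a $4$-cycle $x_1x_2x_3x_4$ and an edge $y_1y_2$ by adding one vertex $v$ adjacent to all six existing vertices. Then $G$ has no four pairwise non-adjacent vertices and no vertex with four pairwise non-adjacent neighbours, so $G$ is $4P_1$-free and $K_{1,4}$-free; $\{v,x_1\}$ is a feedback vertex set while no single vertex is, so $\fvs(G)=2$; and every feedback vertex set of size~$2$ is of the form $\{v,x_i\}$ and hence is not independent, whereas $\{x_1,x_3,y_1\}$ is an independent feedback vertex set of size~$3$ (which simultaneously witnesses that $G$ is near-bipartite). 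Hence $\ifvs(G)=3>2=\fvs(G)$, as needed.

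I expect the main work to lie in the $(P_1+P_2)$-free case of the ``if'' direction: while the structural step (at most three parts, the smallest of size~$1$) is quick, computing $\fvs(K_{1,b,c})$ exactly and showing that the optimum is attained inside a single part requires a careful split according to whether or not the vertex of the size-$1$ part is removed. The remaining ingredients are routine: bookkeeping with induced-subgraph containments, the fact that a $K_4$-subgraph precludes near-bipartiteness, and a finite verification on the explicit $7$-vertex graph above.
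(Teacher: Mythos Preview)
Your proof is correct and follows essentially the same approach as the paper: the ``if'' direction handles $P_3$, $P_1+P_2$ and $3P_1$ separately (your treatment of the $(P_1+P_2)$-free case reproduces Lemma~\ref{lem:fvs-P_1+P_2}), and the ``only if'' direction reduces via Theorem~\ref{thm:indep-fvs} to exhibiting a single $(K_{1,4},4P_1)$-free near-bipartite counterexample. The only difference is cosmetic: your counterexample (the cone over $C_4+K_2$) is not isomorphic to the paper's $7$-vertex graph, but it is equally valid.
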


\subparagraph*{The Price of Independence for Odd Cycle Transversal}
A graph has an independent odd cycle transversal~$S$ if and only if it has a $3$-colouring, since, by definition, we are requesting that~$S$ is an independent set of~$G$ such that $G-S$ has a $2$-colouring.
For a $3$-colourable graph~$G$, let~$\oct(G)$ denote the size of a minimum odd cycle transversal, and let~$\ioct(G)$ denote the size of a minimum independent odd cycle transversal.
Given a class~${\cal X}$ of $3$-colourable graphs, we say that~${\cal X}$ is {\it $\ioct$-bounded} if there is a function $f:\mathbb{Z}_{\geq 0} \to \mathbb{Z}_{\geq 0}$ such that $\ioct(G) \leq f(\oct(G))$ for every $G \in {\cal X}$ and {\it $\ioct$-unbounded} otherwise.
Moreover, a class~${\cal X}$ of $3$-colourable graphs is {\it $\ioct$-identical} if $\ioct(G) = \oct(G)$ for every $G \in {\cal X}$.

In our final two results, proven in Section~\ref{sec:oct}, we address the question of whether or not, for a graph~$H$, the class of $H$-free $3$-colourable graphs is $\ioct$-bounded or $\ioct$-identical, respectively.
Here, we do not have complete dichotomies.
For the former question, we prove that the number of non-equivalent open cases left is three, namely the cases when $H\in \{K_{1,4}, K_{1,3}^+,K_{1,4}^+\}$.
Note that for the latter question there are also three missing cases.

\begin{theorem}\label{thm:indep-oct}
Let~$H$ be a graph.
The class of $H$-free $3$-colourable graphs is $\ioct$-bounded:
\begin{itemize}
\item if~$H$ is an induced subgraph of~$P_4$ or $K_{1,3}+\nobreak sP_1$ for some $s\geq 0$ and
\item only if~$H$ is an induced subgraph of~$K_{1,4}^+$ or $K_{1,4}+\nobreak sP_1$ for some $s\geq 0$.
\end{itemize}
\end{theorem}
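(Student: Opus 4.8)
The plan is to establish the two implications separately, with the positive (``if'') direction broken into the two listed sub-cases and the negative (``only if'') direction handled by a family of constructions.

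\medskip
\noindent\emph{The positive direction.}
First I would treat $H$ an induced subgraph of $P_4$. Since every induced subgraph of a $P_4$-free graph is $P_4$-free, and since $\oct$-boundedness passes to subclasses, it suffices to bound $\ioct(G)$ in terms of $\oct(G)$ for $3$-colourable $P_4$-free graphs. I expect to exploit the well-known structure of $P_4$-free graphs (cographs): every such $G$ is built from single vertices by disjoint union and join. On a join $G_1 \ast G_2$, a $3$-colouring forces at least one of $G_1,G_2$ to be edgeless (in fact a single colour class worth of vertices), so the non-edgeless side is bipartite; a short case analysis on the cotree then shows $\oct(G)$ and $\ioct(G)$ are both bounded by a small constant (I would aim to show $\ioct(G)\le \oct(G)+c$ for some absolute constant $c$, or even that both are at most, say, $2$, outside trivial cases).

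Next I would treat $H=K_{1,3}+sP_1$. The key reduction is that if $G$ is $(K_{1,3}+sP_1)$-free, then for any small odd cycle transversal $D$ (of size $\oct(G)$), the graph $G-D$ is bipartite, and the set of vertices of $G$ outside the closed neighbourhood of $D$ that is ``far'' from $D$ induces a $K_{1,3}$-free bipartite graph — hence a graph of maximum degree at most $2$, i.e.\ a disjoint union of paths and even cycles. Removing $D$ together with $O(|D|)$ further vertices near $D$ leaves a bounded-pathwidth (indeed, disjoint-union-of-paths/cycles) remainder; on such a graph one can greedily convert the bipartition into an independent odd cycle transversal of size comparable to $\oct$. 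The point is that $K_{1,3}$-freeness controls the degree of the bipartite part, so that making the transversal independent costs only a constant factor; I would phrase this as: $\ioct(G) \le g(\oct(G))$ where $g$ depends on $s$ and the $K_{1,3}$-free structure. This step is the main obstacle in the positive direction — the interaction between the deleted set $D$ and the local structure around it needs care, and one must ensure that ``repairing'' the bipartite remainder into an independent set does not create new odd cycles through $D$.

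\medskip
\noindent\emph{The negative direction.}
Here I would argue contrapositively: if $H$ is \emph{not} an induced subgraph of $K_{1,4}^+$ and \emph{not} an induced subgraph of $K_{1,4}+sP_1$ for any $s\ge 0$, I must exhibit a family of $H$-free $3$-colourable graphs $G$ with $\oct(G)$ bounded but $\ioct(G)$ unbounded. The standard technique is to find, for each such forbidden $H$, a single gadget graph $F$ that (i) is $3$-colourable, (ii) has $\oct(F)$ equal to a small constant but $\ioct(F)$ large, and (iii) is $H$-free; then disjoint unions of copies of $F$ (or a single suitably large member of a parametrised family) witness unboundedness. To organise which $H$ need gadgets, I would use the two-parameter obstruction characterisation: $H$ fails to be an induced subgraph of $K_{1,4}^+$ and of every $K_{1,4}+sP_1$ exactly when $H$ contains, as an induced subgraph, one of a short list of minimal ``bad'' graphs — essentially $K_3$ (or any graph with a cycle), $K_{1,5}$, $K_{1,3}+P_2$, $2P_1+P_3$-type configurations, or $K_{1,2}^+$-style subdivided stars that are too big — so it suffices to build one $\ioct$-unbounded, $\oct$-bounded family avoiding each of these. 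For the cycle-containing case, bipartite-like constructions (large incidence-type graphs) give $\oct$ small but $\ioct$ large while being triangle-free, hence $K_3$-free; for the large-star and ``star-plus-edge/path'' cases, one uses graphs of bounded degree (to avoid $K_{1,5}$) built from many disjoint odd components glued minimally (to keep $\oct$ small) where independence of the transversal is obstructed in each component. I expect the bookkeeping of reducing a general forbidden $H$ to this finite list of gadgets — and verifying each gadget simultaneously has the three required properties — to be the most laborious part, though each individual verification is routine.

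\medskip
\noindent Throughout, I would lean on Theorem~\ref{thm:indep-vc} and Theorem~\ref{thm:indep-fvs} as templates: the same ``$K_{1,r}$-type $H$ gives boundedness, anything forcing high-degree bipartite structure gives unboundedness'' dichotomy is at work, and several gadgets can likely be reused (a bipartite $\ivc$-unbounded example is automatically $3$-colourable and, after checking triangle-freeness, serves as an $\ioct$-unbounded example since for bipartite graphs $\oct=0$ while $\ioct$ can be forced large only if the graph is \emph{not} bipartite — so here one instead subdivides or adds a single triangle-free odd gadget to push $\oct$ to a small positive constant). The three cases $H\in\{K_{1,4},K_{1,3}^+,K_{1,4}^+\}$ are precisely where neither the $P_4$/$K_{1,3}+sP_1$ upper-bound argument applies nor a clean unbounded construction is available, which is why they remain open.
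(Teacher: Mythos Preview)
Your overall architecture---positive direction via structure, negative direction via a finite list of minimal obstructions each witnessed by a gadget family---matches the paper, but both halves have genuine gaps as written.

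For the negative direction, your proposed obstruction list is wrong. The graphs $2P_2$ and $P_1+P_4$ are not induced subgraphs of $K_{1,4}^+$ or of any $K_{1,4}+sP_1$, yet neither contains a cycle, $K_{1,5}$, $K_{1,3}+P_2$, $2P_1+P_3$, or a large subdivided star; so your case analysis simply misses them. The correct list the paper uses is: every cycle $C_k$ (forcing $H$ to be a forest), $K_{1,5}$, $P_1+P_4$, $2P_2$, and any graph with two vertices of degree at least~$3$. Each of these needs its own gadget family with $\oct=2$ and $\ioct$ unbounded, and the $K_{1,5}$ case in particular is delicate (the paper builds a graph whose $3$-colouring is provably unique up to permutation). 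Your closing remark about recycling bipartite $\ivc$-unbounded examples is also confused: any bipartite graph has $\oct=\ioct=0$, so it can never witness $\ioct$-unboundedness.

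For the positive direction, the $(K_{1,3}+sP_1)$-free case is where your sketch fails. The assertion that vertices ``far from~$D$'' induce a $K_{1,3}$-free graph is not justified: $(K_{1,3}+sP_1)$-freeness constrains the number of non-neighbours of an induced $K_{1,3}$ in the \emph{whole} graph, not the structure away from a particular small set~$D$. The paper's route is different: it first treats the $K_{1,3}$-free case directly by fixing a $3$-colouring $(V_1,V_2,V_3)$, noting that every vertex has at most two neighbours in each other class (so degree at most~$4$), and then iteratively replacing each transversal vertex in $V_2\cup V_3$ by at most four vertices of~$V_1$ hitting the same odd cycles; this yields an independent transversal inside~$V_1$ of size at most $3\oct(G)$. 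A separate reduction lemma (reusing the bipartite structure lemma from the vertex-cover section) then lifts the bound from $K_{1,r}$-free to $(K_{1,r}+sP_1)$-free.
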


\begin{theorem}\label{t-octi}
Let~$H$ be a graph such that $H \notin \{K_{1,3}, K_{1,3}^+, 2P_1+\nobreak P_3\}$.
The class of $H$-free $3$-colourable graphs is $\ioct$-identical if and only if~$H$ is a (not necessarily induced) subgraph of~$P_4$ that is not isomorphic to~$2P_2$.
\end{theorem}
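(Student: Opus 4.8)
Both implications require proof. Throughout I use that $\oct(G) \le \ioct(G)$ for every $3$-colourable graph $G$, so that ``$\ioct$-identical'' is equivalent to ``$\ioct(G) \le \oct(G)$ for all $G$ in the class''.

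\emph{Sufficiency.} Suppose $H$ is a (not necessarily induced) subgraph of $P_4$ with $H \not\cong 2P_2$. Every such $H$ is an induced subgraph of one of $P_4$, $P_3 + P_1$, $P_2 + 2P_1$ or $4P_1$, so it suffices to prove $\ioct$-identity for the class of $F$-free $3$-colourable graphs when $F$ is one of these four graphs. For $F = P_4$ I would use the structure of connected $3$-colourable $P_4$-free graphs: each such graph is complete bipartite, complete tripartite, or obtained from a disjoint union of complete bipartite graphs by adding one vertex adjacent to all others; in each case a minimum odd cycle transversal may be taken to be a colour class --- a smallest part, or the added vertex --- which is independent, and disjoint unions of such graphs are handled componentwise. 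For $F = 4P_1$ I would use that $\alpha(G) \le 3$, hence every induced bipartite subgraph of $G$ spans at most $6$ vertices, so that $|V(G)| \le \oct(G) + 6$; combined with the observation that $\ioct(G)$ equals the minimum, over all $3$-colourings of $G$, of the size of a smallest colour class, a short counting argument forces $\ioct(G) = \oct(G)$. For $F \in \{P_3 + P_1, P_2 + 2P_1\}$ I would first note that a disconnected graph in either class is a disjoint union of cliques, each of order at most $3$ by $3$-colourability, where the claim is immediate, and otherwise use that in a connected $(P_3 + P_1)$-free, respectively $(P_2 + 2P_1)$-free, graph every induced $P_3$, respectively every edge, dominates all of $G$ except a clique on at most three vertices; this domination property constrains the structure tightly enough to verify $\ioct = \oct$ directly.

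\emph{Necessity.} I would argue the contrapositive: if $H$ is not a subgraph of $P_4$, or $H \cong 2P_2$, then --- using $H \notin \{K_{1,3}, K_{1,3}^+, 2P_1 + P_3\}$ --- the class of $H$-free $3$-colourable graphs is not $\ioct$-identical. The plan is to assemble a short list of $3$-colourable graphs, each with $\ioct$ strictly larger than $\oct$, so that every ``bad'' graph $H$ fails to be an induced subgraph of at least one graph on the list. The list should contain: a triangle-free example, which is automatically $H$-free whenever $H$ contains a triangle (this settles $C_3$, the paw, the diamond, $K_4$, $K_3 + P_1$ and every larger $H$ with a triangle); a $2P_2$-free example, which --- as one checks it must itself contain a triangle --- has to be built separately; a few examples of small fixed order, each $H$-free for every $H$ with more vertices; and some further gadgets tailored to the remaining small triangle-free bad graphs, notably $C_4$, $C_5$, $K_{1,4}$, $K_{1,3} + P_1$, $P_5$, $P_4 + P_1$, $P_3 + P_2$ and $2P_3$. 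After discarding the three excluded graphs, one checks that every $H$ lying outside the family ``subgraph of $P_4$ and not isomorphic to $2P_2$'' is covered by some graph on the list.

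\emph{Main obstacle.} The sufficiency direction is a sequence of routine, if somewhat tedious, structural arguments. The genuine difficulty is in the necessity direction, and specifically in producing the $2P_2$-free counterexample. The constraint is severe: $2P_2$-freeness is equivalent to the complement being $C_4$-free, it forces any counterexample to carry a triangle, and the natural candidates --- complete multipartite graphs, prisms, a low-degree vertex attached to triangles, wheel-like graphs modified to be $3$-colourable --- all turn out to satisfy $\ioct = \oct$, because allowing the transversal to induce edges typically gains nothing. Engineering a $2P_2$-free $3$-colourable graph (and, in the same spirit, a $K_{1,4}$-free and a $C_4$-free one) with $\ioct > \oct$ and verifying the strict inequality is where the effort concentrates; once these gadgets are available, matching each bad $H$ to one of them is bookkeeping.
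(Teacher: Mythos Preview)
Your sufficiency outline is broadly aligned with the paper's --- the same four maximal cases $P_4$, $P_1+P_3$, $2P_1+P_2$, $4P_1$ are treated --- but the sketches for $P_1+P_3$ and $2P_1+P_2$ are too vague to verify. The paper does \emph{not} use your proposed domination argument: for $P_1+P_3$ it applies Olariu's theorem (every component of a $\overline{P_1+P_3}$-free graph is triangle-free or complete multipartite) to the complement, and for $2P_1+P_2$ it carries out a direct, somewhat delicate case analysis on a minimum odd cycle transversal. Your statement that in a connected $(P_1+P_3)$-free graph an induced $P_3$ ``dominates all of $G$ except a clique'' is not what $(P_1+P_3)$-freeness gives (it gives that the $P_3$ dominates \emph{everything}), so the sketch would need reworking.

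For necessity your plan is workable in principle but misses the paper's main shortcut and has a genuine gap exactly where you flag the obstacle. The paper first invokes the $\ioct$-boundedness dichotomy (Theorem~\ref{thm:indep-oct}): if $H$ is not an induced subgraph of $K_{1,4}^+$ or of $K_{1,4}+sP_1$ for some $s$, then $H$-free $3$-colourable graphs are not even $\ioct$-bounded, hence certainly not $\ioct$-identical. This single step disposes of $2P_2$, $C_4$, $C_5$, $P_5$, $P_4+P_1$, $K_{1,5}$ and every $H$ with a cycle or with two vertices of degree $\ge 3$, so no gadget-matching for those is needed. What survives is that $H$ contains one of $K_{1,4}$, $K_{1,3}+P_1$, $5P_1$, $3P_1+P_2$ as an induced subgraph, and just two explicit ten-vertex graphs with $\oct=2$ and $\ioct=3$ finish the job.

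Your ``main obstacle'' --- a $2P_2$-free counterexample --- is therefore misdiagnosed, and the fact that you do not have one is a real gap in the proposal. The paper constructs such a graph explicitly (and in fact a whole unbounded family): take three independent sets $A,B,C$ of size $s$, pick $a\in A$, $b\in B$, $c\in C$, and make each of $a,b,c$ complete to the union of the other two sets. This graph $Q_s$ is $(2P_2,\,P_1+P_4)$-free, has $\oct(Q_s)=2$ via $\{a,b\}$, and every independent odd cycle transversal must contain all of one of $A,B,C$, so $\ioct(Q_s)\ge s$. Your remark that ``natural candidates all turn out to satisfy $\ioct=\oct$'' suggests you were searching among graphs that are too symmetric; the asymmetry between the three hub vertices and the remaining vertices is what makes $Q_s$ work.
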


\subparagraph*{Further Notation}
Let~$G$ be a graph.
For a vertex $v \in V(G)$, the {\em (open) neighbourhood}~$N(v)$ of~$v$ is the set of vertices adjacent to~$v$; the {\em closed neighbourhood}~$N[v]$ is defined to be $N(v) \cup \{v\}$.
We let~$\deg(v)$ denote the {\em degree} of~$v$, i.e. the number of neighbours of~$v$.
Let~$S$ be a set of vertices in~$G$.
We let~$N(S)$ denote the set of those vertices outside~$S$ that have at least one neighbour in~$S$, i.e. $N(S) = \{v \in V(G)\setminus S \; | \; \exists u \in S, uv \in E(G)\}$.
We also let~$N[S]$ denote the set $N(S) \cup S$, and~$G[S]$ denote the \emph{subgraph of~$G$ induced by~$S$}, that is, the graph with vertex set~$S$, where two vertices in~$S$ in are adjacent~$G[S]$ if and only if they are adjacent in~$G$.
Furthermore, we let $G-S$ denote the graph obtained from~$G$ by removing all vertices in~$S$, that is,~$G-S = G[V(G) \setminus S]$.
For a graph~$F$, we write $F\subseteq_i G$ if~$F$ is an induced subgraph of~$G$ and we write $F\subseteq G$ if~$F$ is a (not necessarily induced) subgraph of~$G$, that is, $V(F)\subseteq V(G)$ and $E(F)\subseteq E(G)$.

Let $\{H_1, \ldots, H_s\}$ be a set of graphs.
Then a graph~$G$ is said to be {\em $(H_1, \ldots, H_s)$-free} if it contains no induced subgraph isomorphic to a graph in~$\{H_1, \ldots, H_s\}$.
If $s=1$, then we simply write $H_1$-free instead of $(H_1)$-free.

A vertex $v \in V(G)$ is \emph{complete} (resp. \emph{anti-complete}) to a set $X \subseteq V(G)$ if~$v$ is adjacent (resp. non-adjacent) to every vertex in~$X$.
A set $Y \subseteq V(G)$ is \emph{complete} (resp. \emph{anti-complete}) to a set $X \subseteq V(G)$ if every vertex of~$Y$ is complete (resp. anti-complete) to~$X$.
A graph is {\em complete multi-partite} if its vertex set can be partitioned into independent sets that are complete to each other.
A vertex $v \in V(G)$ is {\em dominating} if it is complete to $V(G) \setminus \{v\}$.
For two vertices $u,v \in V(G)$, we let $\dist(u,v)$ denote the length of a shortest path from~$u$ to~$v$ (by convention, $\dist(u,v)=\infty$ if~$u$ and~$v$ are in different connected components of~$G$).

The {\em complement}~$\overline{G}$ of a graph~$G$ has the same vertex set as~$G$ and an edge between two distinct vertices if and only if these vertices are not adjacent in~$G$.
The {\it double star}~$D_{p,q}$ is the tree on vertices $x,y,u_1,\ldots u_p,v_1,\ldots,v_q$ with edges~$xy$,~$xu_i$ for $i \in \{1,\dots,p\}$, and~$yv_j$ for $j\in \{1,\ldots,q\}$ (see \figurename~\ref{fig:Dsr-for-vc} for an example).

\section{Vertex Cover}\label{sec:vc}

In this section we prove Theorems~\ref{thm:indep-vc} and~\ref{t-vci} as part of a more general theorem.
We start with a useful lemma.

\begin{lemma}\label{lem:star+indep-free-bip-structure}
Let $r,s \geq 1$.
If~$G$ is a $(K_{1,r}+\nobreak sP_1)$-free bipartite graph with bipartition~$(X,Y)$ such that $|X|,|Y| \geq rs+r-1$, then either:
\begin{itemize}
\item every vertex of~$G$ has degree less than~$r$ or
\item fewer than~$s$ vertices of~$X$ have more than $s-\nobreak 1$ non-neighbours in~$Y$ and fewer than~$s$ vertices of~$Y$ have more than $s-\nobreak 1$ non-neighbours in~$X$.
\end{itemize}
\end{lemma}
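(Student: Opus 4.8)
The plan is to assume the first outcome fails and to derive the second one. So $G$ has a vertex of degree at least $r$, which---by the symmetry between $X$ and $Y$ in the statement---we may assume is a vertex $v\in X$. I would first record a simple observation $(\star)$: \emph{any vertex $u\in X$ of degree at least $r$ has at most $s-1$ non-neighbours in $Y$} (and symmetrically with the roles of $X$ and $Y$ exchanged). Indeed, if $u$ had at least $s$ non-neighbours in $Y$, then picking a set $N$ of $r$ neighbours of $u$ in $Y$ and a set $Z$ of $s$ non-neighbours of $u$ in $Y$ would give an induced $K_{1,r}+sP_1$ on $\{u\}\cup N\cup Z$, since $N\cup Z\subseteq Y$ is an independent set. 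This holds with no size hypothesis; the hypotheses $|X|,|Y|\ge rs+r-1$ will be used in the next two steps.

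Next I would show that fewer than $s$ vertices of $X$ have more than $s-1$ non-neighbours in $Y$. Suppose not, and let $x_1,\dots,x_s\in X$ each have at least $s$ non-neighbours in $Y$. By $(\star)$ each $x_i$ has degree at most $r-1$, so $\bigcup_i (N(x_i)\cap Y)$ has at most $s(r-1)$ vertices; by $(\star)$ applied to $v$, $|N(v)\cap Y|\ge |Y|-(s-1)\ge r+s(r-1)$. Hence there is a set $N$ of $r$ neighbours of $v$ in $Y$ that is anti-complete to $\{x_1,\dots,x_s\}$, and (since $v$ has fewer than $s$ non-neighbours in $Y$, it is distinct from each $x_i$) the set $\{v\}\cup N\cup\{x_1,\dots,x_s\}$ induces a $K_{1,r}+sP_1$, a contradiction. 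Running the same argument with $X$ and $Y$ swapped shows that fewer than $s$ vertices of $Y$ have more than $s-1$ non-neighbours in $X$---\emph{provided} $Y$ also contains a vertex of degree at least $r$. So to finish it suffices to guarantee that $Y$ has such a vertex.

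The one thing left to rule out---and the step I expect to be the real obstacle---is the case where $v\in X$ has degree at least $r$ but every vertex of $Y$ has degree at most $r-1$; here I would derive a contradiction with $(K_{1,r}+sP_1)$-freeness by a counting argument. By $(\star)$, the set $Y_1=N(v)\cap Y$ has at least $|Y|-(s-1)\ge rs+r-s$ vertices, and each $y\in Y_1$ is adjacent to $v$ and has degree at most $r-1$, hence has at most $r-2$ neighbours in $X\setminus\{v\}$; thus there are at most $|Y_1|(r-2)$ edges between $Y_1$ and $X\setminus\{v\}$. Since $|X\setminus\{v\}|\ge rs+r-2$, an averaging argument yields $s$ vertices $z_1,\dots,z_s\in X\setminus\{v\}$ with at most $\frac{s}{rs+r-2}|Y_1|(r-2)$ neighbours in $Y_1$ in total, and a short calculation using $|Y_1|\ge rs+r-s$ shows this number is at most $|Y_1|-r$ (it reduces to checking $|Y_1|(r+2s-2)\ge r(rs+r-2)$). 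Then I can pick a set $N\subseteq Y_1$ of size $r$ that is anti-complete to $\{z_1,\dots,z_s\}$, and $\{v\}\cup N\cup\{z_1,\dots,z_s\}$ induces a $K_{1,r}+sP_1$, a contradiction. The cases $r\le 2$ degenerate pleasantly---$r=1$ is vacuous and for $r=2$ the set $Y_1$ is already anti-complete to $X\setminus\{v\}$---so the substance of this step is the range $r\ge 3$. This forces $Y$ to contain a vertex of degree at least $r$, which together with the second paragraph gives the second outcome.
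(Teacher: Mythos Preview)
Your proof is correct and tracks the paper's argument through the first two steps: the observation~$(\star)$ and the pigeonhole that produces an induced $K_{1,r}+sP_1$ from $v$ together with $s$ ``bad'' vertices of~$X$ are exactly what the paper does.

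The only real difference is in how you obtain a vertex of degree at least~$r$ in~$Y$. You treat this as an independent obstacle and set up an averaging argument over the neighbours that $Y_1=N(v)$ can have in $X\setminus\{v\}$; the calculation goes through, but it is more work than necessary. The paper instead \emph{reuses} the conclusion it has just established for~$X$: since fewer than~$s$ vertices of~$X$ have more than $s-1$ non-neighbours in~$Y$, and $|X|\ge r+(s-1)$, one can pick $r$ vertices of~$X$ each with at most $s-1$ non-neighbours in~$Y$; together they miss at most $r(s-1)<|Y|$ vertices of~$Y$, so some $y\in Y$ is adjacent to all of them, giving $\deg(y)\ge r$. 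This two-line pigeonhole replaces your entire third paragraph (and needs no case split on~$r$). Your route has the mild conceptual advantage of establishing the existence of a high-degree vertex in~$Y$ directly from the hypothesis on~$v$, without first proving the $X$-half of the second alternative; but the paper's ordering of the steps makes the argument noticeably shorter.
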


\begin{proof}
Let~$G$ be a $(K_{1,r}+\nobreak sP_1)$-free bipartite graph with bipartition~$(X,Y)$ such that $|X|,|Y| \geq rs+r-1$.
No vertex in~$X$ can have both~$r$ neighbours and~$s$ non-neighbours in~$Y$, otherwise~$G$ would contain an induced $K_{1,r}+\nobreak sP_1$.
Therefore every vertex in~$X$ has degree either at most~$r-\nobreak 1$ or at least $|Y|-(s-1) \geq rs+r-s$.
By symmetry, we may assume that there is a vertex $x \in X$ of degree at least~$r$.
Suppose, for contradiction, that there is a set $X' \subseteq X$ of~$s$ vertices, each of which has more than $s-\nobreak 1$ non-neighbours in~$Y$.
Then every vertex of~$X'$ has degree at most~$r-\nobreak 1$.
Since $\deg(x) \geq rs+r-s= s(r-1)+r$, there must be a set $Y' \subseteq N(x)$ of~$r$ neighbours of~$x$ that have no neighbours in~$X'$.
Then $G[\{x\} \cup Y' \cup X']$ is a $K_{1,r}+\nobreak sP_1$, a contradiction.
It follows that fewer than~$s$ vertices in~$X$ have more than $s-\nobreak 1$ non-neighbours in~$Y$.
Since $|X|\geq r+(s-1)$, there is a set $X'' \subsetneq X$ of~$r$ vertices, each of which has at most $s-\nobreak 1$ non-neighbours in~$Y$.
Since $|Y|>r(s-1)$, there must be a vertex $y \in Y$ that is complete to~$X''$, and therefore has $\deg(y) \geq r$.
Repeating the above argument, it follows that fewer than~$s$ vertices of~$Y$ have more than $s-\nobreak 1$ non-neighbours in~$X$.
This completes the proof.\qedllncs
\end{proof}

We recall that a graph has an independent vertex cover if and only if it is bipartite, and we prove two more lemmas.

\begin{lemma}\label{lem:vc-K1r+sP1}
Let $r,s \geq 1$.
If~$G$ is a $(K_{1,r}+\nobreak sP_1)$-free bipartite graph, then $\ivc(G) \leq r\cdot \vc(G)+rs$.
\end{lemma}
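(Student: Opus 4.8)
The plan is to fix a bipartition $(X,Y)$ of~$G$ and argue by cases on the sizes of~$X$ and~$Y$, exploiting two trivial facts: every subset of a partition class is an independent set, and each partition class of a bipartite graph is a vertex cover. Write $k=\vc(G)$. If $k=0$ then~$G$ is edgeless and $\ivc(G)=0\leq rs$, so assume $k\geq 1$. If $\min\{|X|,|Y|\}< rs+r-1$, then the smaller class is an independent vertex cover of size at most $rs+r-2<rk+rs$, so we are done. Thus the real work is the case $|X|,|Y|\geq rs+r-1$, where Lemma~\ref{lem:star+indep-free-bip-structure} applies and leaves two possibilities.

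In the first of these, every vertex of~$G$ has degree less than~$r$. Take a minimum vertex cover~$S$, so $|S|=k$. Since~$S$ is a vertex cover, every edge of~$G$ has both endpoints in~$N[S]$, and $|N[S]|\leq |S|+\sum_{v\in S}\deg(v)\leq k+k(r-1)=kr$. Hence the smaller of $X\cap N[S]$ and $Y\cap N[S]$ is an independent set that still covers every edge of~$G$, of size at most $kr/2\leq rk+rs$, which gives the bound.

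The remaining and main case is the second outcome of Lemma~\ref{lem:star+indep-free-bip-structure}: there is a set $X_0\subseteq X$ with $|X_0|\leq s-1$ such that every vertex of $X\setminus X_0$ has at most $s-1$ non-neighbours in~$Y$, and symmetrically a set $Y_0\subseteq Y$ with $|Y_0|\leq s-1$. Fix a minimum vertex cover~$S$. The key observation is that any $x\in X\setminus(S\cup X_0)$ forces $|Y\setminus S|\leq s-1$: indeed $x$ has at least $|Y|-(s-1)$ neighbours in~$Y$, and since $x\notin S$ all of them must lie in~$S$; symmetrically, any vertex of $Y\setminus(S\cup Y_0)$ forces $|X\setminus S|\leq s-1$. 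Therefore we are in (at least) one of three situations: $X\subseteq S\cup X_0$, so $|X|\leq k+s-1$ and~$X$ is a suitable independent vertex cover; or $Y\subseteq S\cup Y_0$, symmetrically; or both $|X\setminus S|,|Y\setminus S|\leq s-1$, so $|V(G)|\leq k+2(s-1)$ and the smaller partition class has size at most $k/2+s-1$. In each case the desired bound $\ivc(G)\leq r\cdot\vc(G)+rs$ follows since $r\geq 1$. The main obstacle is pinning down this trichotomy correctly---observing that a vertex lying outside both the cover and the small exceptional set pushes almost the whole opposite class into the cover---after which only a routine verification that the three resulting estimates all stay below $r\cdot\vc(G)+rs$ remains.
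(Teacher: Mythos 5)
Your proof is correct and takes essentially the same route as the paper: both reduce to Lemma~\ref{lem:star+indep-free-bip-structure} once the partition classes are large, then split into the low-degree case (where a neighbourhood of a minimum vertex cover inside one class gives an independent cover) and the few-exceptional-vertices case (where a vertex with at most $s-1$ non-neighbours that avoids the cover forces almost all of the opposite class into it). The only differences are cosmetic: the paper closes the second case by a contradiction under a slightly larger size threshold, whereas you close it constructively via your trichotomy, and you use $X\cap N[S]$ instead of $(S\cap Y)\cup N(S\cap X)$ in the low-degree case; both variants are sound.
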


\begin{proof}
Let~$G$ be a $(K_{1,r}+\nobreak sP_1)$-free bipartite graph.
Fix a bipartition~$(X,Y)$ of~$G$.
Let~$S$ be a minimum vertex cover of~$G$, so $|S|=\vc(G)$.
We may assume that $\vc(G) \geq\nobreak 2$, otherwise $\ivc(G)=\vc(G)$, in which case we are done.
We may also assume that $|X|,|Y| > \vc(G)r+rs > rs+r-1$, otherwise~$X$ or~$Y$ is an independent vertex cover of the required size, and we are done.
If every vertex of~$G$ has degree at most~$r-1$, then $S'=(S \cap Y)\cup (N(S \cap X))$ is an independent vertex cover in~$G$ of size at most~$\vc(G)(r-1)$, and we are done.
By Lemma~\ref{lem:star+indep-free-bip-structure}, we may therefore assume that fewer than~$s$ vertices of~$X$ have more than $s-1$ non-neighbours in~$Y$.
We will show that this leads to a contradiction.
Since $|X|,|Y| \geq \vc(G)+s$, there must be a set~$S'$ of $\vc(G)+\nobreak 1$ vertices in~$X$ that each have at least $\vc(G)+\nobreak 1$ neighbours in~$Y$.
If a vertex $x \in V(G)$ has degree at least $\vc(G)+\nobreak 1$, then $|N(x)|>|S|$, so $x \in S$.
Therefore every vertex of~$S'$ must be in~$S$, contradicting the fact that $|S'| =\vc(G)+\nobreak 1>\vc(G)=|S|$.
\end{proof}

\begin{lemma}\label{lem:vc-K1r+}
Let $r \geq 2$.
If~$G$ is a $K_{1,r}^+$-free bipartite graph, then $\ivc(G) \leq (r-1)(\vc(G))^2$.
\end{lemma}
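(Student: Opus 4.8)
The plan is to show that if $G$ is a $K_{1,r}^+$-free bipartite graph, then one of the two sides of the bipartition is ``almost'' an independent vertex cover, and to bound the number of exceptional vertices quadratically in $\vc(G)$. Fix a bipartition $(X,Y)$ of $G$ and a minimum vertex cover $S$, so $|S| = \vc(G)$. As in Lemma~\ref{lem:vc-K1r+sP1}, I may assume $\vc(G) \geq 2$ and that both $|X|$ and $|Y|$ are large (larger than $(r-1)(\vc(G))^2$), since otherwise $X$ or $Y$ is already an independent vertex cover of the required size. The first step is to understand the structure forced by $K_{1,r}^+$-freeness. The graph $K_{1,r}^+$ is obtained from the star $K_{1,r}$ by subdividing one edge, so it consists of a central vertex $c$ adjacent to $r-1$ leaves and to one more vertex $a$, with $a$ adjacent to a pendant vertex $b$. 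Forbidding this as an induced subgraph means: there is no vertex $c$ with degree at least $r$ (counting $r-1$ ordinary neighbours plus one neighbour $a$) such that $a$ has a neighbour $b \notin N[c]$; equivalently, whenever a vertex $c$ has degree $\geq r$, then for every neighbour $a$ of $c$, every neighbour of $a$ other than $c$ lies in $N(c)$.

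The second step is to exploit this. Let $B = \{v \in V(G) : \deg(v) \geq r\}$ be the set of ``big'' vertices. For any big vertex $c$ and any neighbour $a$ of $c$, we have $N(a) \subseteq N[c]$. In a bipartite graph with $c \in X$, say, this means $a \in Y$ has its entire neighbourhood contained in $\{c\} \cup (N(c)\setminus\{a\})$, but $N(a) \subseteq X$ and $N(c) \subseteq Y$, so in fact $N(a) \subseteq \{c\}$, i.e.\ $a$ has degree exactly $1$ with $c$ as its unique neighbour — unless $a$ is itself in $X$, which is impossible. Wait: $c \in X$ forces $a \in Y$ and $N(c) \subseteq Y$, so $N(c) \setminus \{a\} \subseteq Y$ cannot contain any neighbour of $a$ (those lie in $X$), hence $N(a) \subseteq \{c\}$. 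So every neighbour of a big vertex is a pendant vertex attached to it. Now, any big vertex $c$ must lie in the vertex cover $S$: its at-least-$r \geq 2$ pendant neighbours each have only $c$ available, and since $G$ has at most $\vc(G) < |X|$ edges incident to... more carefully, each such pendant neighbour $a$ has the edge $ca$ which must be covered, and covering it via $a$ would force all of $N(c)$ into $S$; a short counting argument (as in the previous lemma, using $|X|, |Y|$ large) shows $c \in S$. Thus $|B| \leq \vc(G)$.

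The third step assembles the independent vertex cover. Every vertex not in $B$ has degree at most $r-1$. Take $S$ and split it as $S = (S \cap X) \cup (S \cap Y)$; replace $S \cap X$ by $N(S \cap X)$ to try to push everything into $Y$, but vertices of $S \cap X$ that are big contribute only pendant neighbours, which is fine, while the at most $\vc(G)$ small vertices of $S \cap X$ each contribute at most $r-1$ neighbours. More directly: $Y$ is a vertex cover, and I want to turn it into an independent one by deleting a small set and adding its neighbourhood; the only obstruction to $Y$ being independent is edges inside $Y$, but $Y$ is one side of the bipartition so it is already independent — the real issue is that $Y$ might be huge, but we already reduced to $|Y|$ large meaning $X$ is the candidate. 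The correct route is: consider the small vertices in $S$; there are at most $\vc(G)$ of them, each of degree at most $r-1$, so $N(S \setminus B)$ has size at most $(r-1)\vc(G)$. Then $(S \setminus B)$'s neighbours together with the pendant-structure around $B$ yield an independent set of size $O((r-1)(\vc(G))^2)$ that still covers every edge, the quadratic factor coming from the interaction between the $\leq \vc(G)$ vertices of $B$ and their neighbourhoods, or from iterating the replacement of $S \cap X$ by neighbourhoods of bounded-degree vertices.

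\textbf{Main obstacle.} The delicate point is the bookkeeping that produces exactly the quadratic bound $(r-1)(\vc(G))^2$ rather than something linear or cubic: one must carefully track that after removing the big vertices (whose neighbours are all pendant and hence trivially handled) and the small vertices of the cover (each of degree $<r$), the total number of vertices one is forced to add is at most roughly $(r-1)$ times $|S|$, and that this does not need to be iterated more than a bounded number of times. I expect the actual argument does a single replacement of one side's cover-vertices by their neighbourhoods, using that a big vertex of $X$ in $S$ has only pendant neighbours so moving it ``down'' costs almost nothing, while the square arises because there can be up to $\vc(G)$ big vertices each of which can have up to $\vc(G)$-many... no — each big vertex's neighbours are pendants counted once; more plausibly the square is simply the crude bound $r \cdot \vc(G) + rs$ from Lemma~\ref{lem:vc-K1r+sP1} reapplied with a parameter depending on $\vc(G)$, since a $K_{1,r}^+$-free graph is $(K_{1,r} + P_1)$-free after deleting the at most $\vc(G)$ vertices of $S$. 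That last observation is likely the cleanest path: outside any vertex cover $S$ the graph is edgeless, so within $N[S]$ one is in a bounded-size situation, and a $K_{1,r}^+$-free graph restricted appropriately is $(K_{1,r} + sP_1)$-free for a suitable $s$ tied to $\vc(G)$, letting one invoke the previous lemma.
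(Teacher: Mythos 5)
The central structural claim in your second step is false, and it is the load-bearing part of your plan. You assert that $K_{1,r}^+$-freeness forces every vertex $c$ of degree at least $r$ to satisfy $N(a)\subseteq N[c]$ for each neighbour $a$ of $c$, and hence, in a bipartite graph, that every neighbour of a high-degree vertex is a pendant. This overlooks that in an \emph{induced} copy of $K_{1,r}^+$ the $r-1$ leaves must also be non-adjacent to the far endpoint $b$; since $b$ lies on the same side of the bipartition as $c$, this is a real constraint. Concretely, $K_{r,r}$ (or any almost complete bipartite graph, cf.\ Lemma~\ref{l-alek}) is $P_4$-free and hence $K_{1,r}^+$-free, yet every vertex has degree $r$ and none of their neighbours are pendants, so the conclusion "$|B|\leq \vc(G)$ and all neighbours of big vertices are pendants" collapses, and with it steps two and three. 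The fallback you sketch at the end fails too: a $K_{1,r}^+$-free bipartite graph of bounded vertex cover need not be $(K_{1,r}+\nobreak sP_1)$-free for any $s$ bounded in terms of $\vc(G)$, because $K_{1,r}+\nobreak sP_1$ is itself $K_{1,r}^+$-free (for $r\geq 3$ all neighbours of its unique high-degree vertex are leaves) while its vertex cover number is $1$ for every $s$; so Lemma~\ref{lem:vc-K1r+sP1} cannot simply be invoked with a parameter tied to $\vc(G)$. Finally, your third step is not an argument: it explicitly defers the quadratic bookkeeping, which is exactly the content of the lemma.

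For comparison, the paper uses $K_{1,r}^+$-freeness in a different, local way: if $x,y$ lie on the same side with $\dist(x,y)=2$ and $\deg(x)\geq \deg(y)+r-1$, then $x$, a common neighbour, $y$ and $r-1$ vertices of $N(x)\setminus N(y)$ induce a $K_{1,r}^+$ --- choosing the leaves outside $N(y)$ is precisely what guarantees induced-ness, the point your argument misses. Hence degrees on one side change by at most $r-2$ across each distance-$2$ step; since $G$ must be $P_{2\vc(G)+2}$-free, distances in a connected $G$ are at most about $2\vc(G)$, and some vertex of $X$ outside a minimum cover $S$ has degree at most $\vc(G)$, so \emph{every} vertex of $X$ has degree at most $(r-1)\vc(G)$. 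Replacing $S\cap X$ by $N(S\cap X)$ then yields an independent vertex cover of size at most $(r-1)(\vc(G))^2$. Some such uniform degree bound on one side is what your proposal would need and does not establish.
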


\begin{proof}
Clearly it is sufficient to prove the lemma for connected graphs~$G$.
Let~$G$ be a connected $K_{1,r}^+$-free bipartite graph.
Fix a bipartition $(X,Y)$ of~$G$.
Let~$S$ be a minimum vertex cover of~$G$, so $|S|=\vc(G)$.
We may assume that $\vc(G) \geq 2$, otherwise $\ivc(G)=\vc(G)$ and we are done.
We may also assume that $|X|,|Y| > (\vc(G))^2(r-1)$, otherwise~$X$ or~$Y$ is an independent vertex cover of the required size.

If there are two vertices $x,y \in X$ with $\dist(x,y)=2$ and $\deg(x)\geq \deg(y)+\nobreak (r-1)$, then $x,y$, a common neighbour of~$x$ and~$y$, and~$r-1$ vertices from $N(x) \setminus N(y)$ would induce a~$K_{1,r}^+$ in~$G$, a contradiction.
Therefore, if $x,y \in X$ with $\dist(x,y)=2$, then $|\deg(x)-\deg(y)| \leq r-2$, so $|\deg(x)-\deg(y)| \leq (\frac{r-2}{2})\dist(x,y)$.
By the triangle inequality and induction, it follows that if $x,y \in X$, then $|\deg(x)-\deg(y)| \leq (\frac{r-2}{2})\dist(x,y)$.
Observe that $\vc(P_{2\vc(G)+2})=\vc(G)+1$, so~$G$ must be $P_{2\vc(G)+2}$-free.
Since~$G$ is connected, it follows that if $x,y \in V(G)$, then $\dist(x,y) < 2\vc(G)+1$.
We conclude that if $x,y \in X$, then $|\deg(x)-\deg(y)| \leq \vc(G)(r-2)$.
Note that if a vertex $x \in V(G)$ has degree at least $\vc(G)+\nobreak 1$, then $|N(x)|>|S|$ and so $x \in S$.

Since $|X| > (\vc(G))^2(r-1) > \vc(G) = |S|$, there must be a vertex $y \in X \setminus S$.
Since $y \in X \setminus S$, it follows that $\deg(y)\leq \vc(G)$.
It follows that $\deg(x)\leq \deg(y)+\vc(G)(r-2) \leq \vc(G)(r-1)$ for all $x \in X$.
We conclude that $S'=(S \cap Y)\cup (N(S \cap X))$ is an independent vertex cover in~$G$ of size at most~$(\vc(G))^2(r-1)$.
This completes the proof.\qedllncs
\end{proof}

A graph is an {\it almost complete bipartite graph} if it can be obtained from a complete bipartite graph by removing a (possibly empty) set of edges that form a matching.
We need the following lemma due to Alekseev.

\begin{lemma}[\cite{Al04}]\label{l-alek}
Every connected $K_{1,3}^+$-free bipartite graph is either a path, a cycle or an almost complete bipartite graph.
\end{lemma}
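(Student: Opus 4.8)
The plan is to analyse a connected $K_{1,3}^+$-free bipartite graph $G$ with bipartition $(X,Y)$ by looking at how vertices on one side compare via their neighbourhoods. The key structural observation is that forbidding $K_{1,3}^+$ controls the interaction between a high-degree vertex and a vertex at distance~$2$ from it: if $x$ has a neighbour $z$, and $y$ is another neighbour of $z$ (so $\dist(x,y)=2$), then $x$ cannot have three neighbours outside $N(y)\cup\{z\}$, since $x$ together with three such neighbours, plus the path $x$--$z$--$y$, would induce a $K_{1,3}^+$. Hence $|N(x)\setminus N(y)|\le 2$ whenever $x,y$ lie on the same side and have a common neighbour; iterating along shortest paths shows the neighbourhoods of any two same-side vertices are ``close'' in symmetric difference. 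First I would make this precise and record the degree-gap consequence already used in Lemma~\ref{lem:vc-K1r+} for $r=3$: any two vertices $x,y$ in the same part with $\dist(x,y)=2$ satisfy $|N(x)\triangle N(y)|\le 2$ in a strong sense (one contains almost all of the other).

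Next I would split into cases according to the maximum degree of $G$. If $\Delta(G)\le 2$, then $G$ is a disjoint union of paths and cycles, and connectedness forces $G$ to be a single path or a single cycle; moreover a bipartite cycle has even length, so this is consistent. So assume some vertex, say $x_0\in X$, has $\deg(x_0)\ge 3$. The plan is then to show $G$ is an almost complete bipartite graph. Using the neighbourhood-closeness property, I would argue that every vertex of $Y$ is adjacent to ``almost all'' of $X$ and vice versa: pick $x_0$ of degree $\ge 3$; any $y\in N(x_0)$ has, by the distance-2 argument applied to pairs in $Y$ through $x_0$, neighbourhood differing from that of other vertices of $Y$ by a bounded amount, and one pushes this around the connected graph. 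The cleanest route is probably to show directly that if $x,x'\in X$ are nonadjacent-to-a-common-pattern then the missing edges form a matching. Concretely: I would prove that for any $x\in X$ and $y\in Y$, if $xy\notin E(G)$ then $x$ is complete to $Y\setminus\{y\}$ and $y$ is complete to $X\setminus\{x\}$; this immediately gives that the non-edges between $X$ and $Y$ form a matching, i.e.\ $G$ is almost complete bipartite. To get this, suppose $xy\notin E(G)$; since $G$ is connected and (after discarding the path/cycle case) has a vertex of degree $\ge3$, there is a neighbour $z$ of $x$ and the path considerations force $y$'s non-neighbours in $X$ other than $x$ to be empty, because an extra non-edge $x'y$ with $x'\ne x$ would, combined with a suitable length-2 connection and a degree-$\ge 3$ witness, produce an induced $K_{1,3}^+$.

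The main obstacle I anticipate is the bookkeeping in the ``mixed'' case — when $G$ is neither a path/cycle nor obviously dense — and in particular handling small or irregular configurations where the claimed high-degree witness vertex is far from the non-edge $xy$, so that the induced $K_{1,3}^+$ one wants to exhibit is not immediately available. Managing this cleanly requires either a careful induction on $\dist(x_0, \{x,y\})$ propagating the ``almost complete'' property along a BFS layering, or invoking the degree-gap bound from the proof of Lemma~\ref{lem:vc-K1r+} to show all same-side degrees are within an additive constant and then ruling out the intermediate-degree regime by a direct forbidden-subgraph check. I would also need to separately verify the base cases where $|X|$ or $|Y|$ is very small (at most $2$ or $3$), where a graph can simultaneously look like a short path and like a small almost complete bipartite graph; these coincide and cause no real trouble but must be stated. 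Once the matching structure of the non-edges is established, the conclusion that $G$ is a path, a cycle, or an almost complete bipartite graph follows, completing the proof.
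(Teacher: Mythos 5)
First, note that the paper does not actually prove this lemma: it is imported from Alekseev~\cite{Al04}, so there is no in-paper argument to compare against, and your attempt has to stand on its own. As written, it does not. The easy half (maximum degree at most~$2$ together with connectivity gives a path or a cycle) is fine, and reducing the remaining case to the claim that every non-edge $xy$ is the only non-edge at $x$ and the only non-edge at $y$ is the right target, since that is exactly the matching structure of the non-edges. But the proof of that claim is precisely what you have not supplied: you state it, anticipate the difficulty (non-edges far from the degree-$3$ witness, the ``mixed'' regime), and name two candidate strategies (a BFS-layer induction, or the degree-gap bound from the proof of Lemma~\ref{lem:vc-K1r+}) without executing either. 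Since, once paths and cycles are set aside, that claim \emph{is} the lemma, this is a genuine gap rather than deferred bookkeeping.

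There is also a quantitative slip that undermines the propagation you envisage. For same-side vertices $x,y$ with a common neighbour $z$, already \emph{two} neighbours $a,b$ of $x$ outside $N(y)$ yield an induced $K_{1,3}^+$ (centre $x$, leaves $a,b$, subdivision vertex $z$, end $y$); you only forbid three such neighbours, which gives the weaker bound $|N(x)\setminus N(y)|\le 2$. The correct conclusion is $|N(x)\setminus N(y)|\le 1$, and the distinction matters: a bound of~$2$ still allows a vertex with two non-neighbours on the other side, which is exactly what ``almost complete bipartite'' must exclude, so the neighbourhood-closeness property in the form you state it cannot terminate in the matching structure. With the tight bound the plan becomes workable (any same-side vertex at distance~$2$ from a vertex of degree at least~$3$ misses at most one of its neighbours, and one must then show that all relevant same-side pairs are at distance~$2$ and treat both sides of the bipartition), but those steps still need to be carried out before this can be considered a proof.
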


We also need the following lemma.

\begin{lemma}\label{almost-complete-bipartite}
Let~$G$ be an almost complete bipartite graph.
Then $\ivc(G)=\vc(G)$.
\end{lemma}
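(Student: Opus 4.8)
The plan rests on two easy bounds. First, every independent vertex cover is in particular a vertex cover, so $\ivc(G)\ge\vc(G)$ always holds. Second, if $(X,Y)$ is the bipartition of $G$ witnessing that $G$ is an almost complete bipartite graph, then both $X$ and $Y$ are independent sets that cover every edge of $G$ (all edges run between $X$ and $Y$), so $\ivc(G)\le\min\{|X|,|Y|\}$. Hence it suffices to prove $\vc(G)\ge\min\{|X|,|Y|\}$: combining this with the two displayed inequalities gives $\vc(G)\ge\min\{|X|,|Y|\}\ge\ivc(G)\ge\vc(G)$, forcing equality throughout.

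To prove $\vc(G)\ge\min\{|X|,|Y|\}$ I would first dispose of the trivial case in which $G$ is edgeless, where $\vc(G)=\ivc(G)=0$. Otherwise, write $G$ as the complete bipartite graph on parts $X$ and $Y$ with the edges of a matching $M$ deleted, and assume without loss of generality that $|X|\le|Y|$, so that $\min\{|X|,|Y|\}=|X|\ge 1$. By K\H{o}nig's theorem it is enough to exhibit a matching of $G$ that saturates $X$, and by Hall's theorem it is enough to verify $|N_G(S)|\ge|S|$ for every $S\subseteq X$. The key observation is that deleting a matching barely disturbs neighbourhoods: each vertex of $X$ lies in at most one edge of $M$ and so has at least $|Y|-1\ge|X|-1$ neighbours in $G$, while each vertex of $Y$ lies in at most one edge of $M$ and so has at most one non-neighbour in $X$. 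Consequently, if $|S|\ge 2$ then every vertex of $Y$ is adjacent to some vertex of $S$, so $N_G(S)=Y$ and $|N_G(S)|=|Y|\ge|X|\ge|S|$; if $S=\{x\}$ then $\deg_G(x)\ge1$ (when $|X|\ge2$ this is because $\deg_G(x)\ge|Y|-1\ge1$, and when $|X|=1$ it is because $X=\{x\}$ and $G$ has an edge), so $|N_G(S)|\ge1=|S|$; and $|S|=0$ is immediate. Thus Hall's condition holds, the desired matching exists, and $\vc(G)\ge|X|$.

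The argument is short, and the only point requiring care is the bookkeeping in the Hall-condition check for the degenerate cases ($G$ edgeless, or $|X|\le1$); the substantive content — that an almost complete bipartite graph with at least one edge has a matching saturating its smaller side — follows immediately from the fact that removing a matching changes each neighbourhood by at most one vertex. An entirely elementary alternative to invoking K\H{o}nig's and Hall's theorems is to take a minimum vertex cover $S$, note that if $|S|<|X|$ then some $x\in X\setminus S$ forces $N_G(x)\subseteq S$, and derive a contradiction from $|N_G(x)|\ge|Y|-1\ge|X|-1$ together with a short case analysis; I would mention this but present the K\H{o}nig/Hall version as the cleaner one.
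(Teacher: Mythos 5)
Your proof is correct, but it takes a different route from the paper's. You prove the stronger fact that every almost complete bipartite graph with at least one edge satisfies $\vc(G)=\min\{|X|,|Y|\}$, by checking Hall's condition (each vertex of $Y$ has at most one non-neighbour in $X$, since the deleted edges form a matching) to get a matching saturating the smaller side, and then sandwiching $\vc(G)\geq\min\{|X|,|Y|\}\geq\ivc(G)\geq\vc(G)$; note that for the first inequality you only need the trivial bound ``vertex cover $\geq$ matching size'', not the full strength of K\H{o}nig's theorem. The paper instead argues directly on a minimum vertex cover $S$: it first reduces to connected components, observes that the cases ``$S$ independent'' and ``$|S|=|X|$'' are immediate, and otherwise picks adjacent $x\in X\cap S$, $y\in Y\cap S$ together with some $y'\in Y\setminus S$, whose neighbourhood lies in $S$ and misses at most one vertex of $X$, yielding the contradiction $|X|>|S|\geq 1+(|X|-1)$. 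Your elementary alternative sketched at the end (a vertex of $X\setminus S$ forcing $N(x)\subseteq S$) is essentially the paper's style of argument, just anchored on the other side of the bipartition. What your main route buys is a uniform treatment with no reduction to connected components and the extra information that the optimum equals the size of the smaller part; what the paper's route buys is brevity and the avoidance of any matching-theoretic machinery.
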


\begin{proof}
Notice that $\ivc(G)=\vc(G)$ holds if and only if the equality holds for every connected component of~$G$.
Therefore, without loss of generality, we may assume that~$G$ is connected.
Let $X,Y$ be the parts of the bipartition of~$G$, and let~$S$ be a minimum vertex cover of~$G$.
We may assume without loss of generality that $|X|\leq |Y|$.
If $\vc(G)\leq 1$, then $\ivc(G)=\vc(G)$.
Therefore we may assume that $|X|\geq \vc(G)\geq 2$.
If~$S$ is independent or $|S|=|X|$, then again $\ivc(G)=\vc(G)$.

Now we assume that~$S$ is not independent and $|X|>|S|$.
This implies that there exist two adjacent vertices $x\in X\cap S$ and $y\in Y\cap S$, and another vertex $y'\in Y\setminus S$.
Since~$G$ is a connected almost complete bipartite graph, the vertex~$y'$ is adjacent to all vertices of~$X$ but at most one.
Moreover, since $y'\not\in S$, the neighbourhood of~$y'$ is contained in~$S$.
Therefore $|X|>|S|\geq |\{y\}\cup N(y')| \geq 1 + (|X|-1)=|X|$, a contradiction.
\end{proof}

Our next theorem is the main result of this section and immediately implies Theorems~\ref{thm:indep-vc} and~\ref{t-vci}.
If an upper bound given in this theorem is tight, that is, if there exists an $H$-free bipartite graph~$G$ for which equality holds, we indicate this by a~$*$ in the corresponding row (whereas the other upper bounds are not known to be tight).

\begin{theorem}\label{t-vc}
Let~$H$ be a graph.
Then the following two statements hold:
\begin{enumerate}[(i)]
\renewcommand{\theenumi}{(\roman{enumi})}
\renewcommand\labelenumi{(\roman{enumi})}
\item\label{t-vc:i} the class of $H$-free bipartite graphs is $\ivc$-bounded if and only if~$H$ is an induced subgraph of $K_{1,r}+\nobreak rP_1$ or~$K_{1,r}^+$ for some~$r\geq 1$; and
\item\label{t-vc:ii} the class of $H$-free bipartite graphs is $\ivc$-identical if and only if~$H$ is an induced subgraph of~$K_{1,3}^+$ or~$2P_1+\nobreak P_3$.\\[-8pt]
\end{enumerate}
In particular, the following statements hold for every $H$-free bipartite graph~$G$: 
	\begin{enumerate}[(1)*]
\renewcommand{\theenumi}{(\arabic{enumi})}
\renewcommand\labelenumi{(\arabic{enumi})\phantom{*}}

                \refstepcounter{enumi}
		\item[\textcolor{darkgray}{\sffamily\bfseries\upshape\mathversion{bold}{(1)*}}]\label{t-vc:1} $\ivc(G) = \vc(G)$ if $H \subseteq_i K_{1,3}^+$ or $H \subseteq_i 2P_1+\nobreak P_3$
                \refstepcounter{enumi}
		\item[\textcolor{darkgray}{\sffamily\bfseries\upshape\mathversion{bold}{(2)*}}]\label{t-vc:2} $\ivc(G) \leq \vc(G) + 1$ if $H = K_{1,3} + P_1$
		\item\label{t-vc:3} $\ivc(G) \leq \vc(G) + s - 3$ if $H = sP_1$ for $s \geq 5$
		\item\label{t-vc:4b} $\ivc(G) \leq \vc(G) + s - 2$ if $H = sP_1+\nobreak P_2$ for $s \geq 3$
                \refstepcounter{enumi}
		\item[\textcolor{darkgray}{\sffamily\bfseries\upshape\mathversion{bold}{(5)*}}]\label{t-vc:4} $\ivc(G) \leq \vc(G) + s - 2$ if $H = sP_1+\nobreak P_3$ for $s \geq 3$
		\item\label{t-vc:5} $\ivc(G) \leq \vc(G) + 3s + 2$ if $H = K_{1,3} +\nobreak sP_1$ for $s \geq 2$
                \refstepcounter{enumi}
		\item[\textcolor{darkgray}{\sffamily\bfseries\upshape\mathversion{bold}{(7)*}}]\label{t-vc:6} $\ivc(G) \leq (r-1) \vc(G) - 1$ if $H = K_{1,r}$ for $r \geq 4$
		\item\label{t-vc:7} $\ivc(G) \leq r\cdot \vc(G) + rs$ if $H = K_{1,r} +\nobreak sP_1$ for $r \geq 4, s \geq 1$
		\item\label{t-vc:8} $\ivc(G) \leq (r-1) \vc(G)^2$ if $H = K_{1,r}^+$ for $r \geq 4$
	\end{enumerate}
\end{theorem}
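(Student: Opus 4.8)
The plan is to prove Theorem~\ref{t-vc} by establishing the numbered bounds (1)*--(9) (together with constructions showing the starred ones are tight) and then reading off (i) and~(ii). The upper-bound half and the lower-bound half are largely independent.

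On the upper-bound side, the two boundedness families of~(i) are immediate from the preceding lemmas: if $H\subseteq_i K_{1,r}+rP_1$ then an $H$-free bipartite graph is $(K_{1,r}+rP_1)$-free, so Lemma~\ref{lem:vc-K1r+sP1} applies with $s=r$; if $H\subseteq_i K_{1,r}^+$ with $r\ge 2$ then Lemma~\ref{lem:vc-K1r+} applies; and $H\subseteq_i K_{1,1}^+=P_3$ is trivial since $P_3$-free bipartite graphs are disjoint unions of $K_1$'s and $K_2$'s. In particular bounds~(8) and~(9) are just these two lemmas for $r\ge 4$. For the sharper bounds the engine is Lemma~\ref{lem:star+indep-free-bip-structure}: for $H\in\{K_{1,3}+P_1,\,K_{1,3}+sP_1,\,sP_1+P_2,\,sP_1+P_3\}$ I would apply it with the relevant $r\in\{2,3\}$ to obtain a dichotomy. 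Either every vertex has degree less than $r$, so $G$ has maximum degree at most $2$ and is a disjoint union of paths and even cycles, for which the set $(S\cap Y)\cup N(S\cap X)$ gives $\ivc=\vc$ (with only short components contributing an extra constant); or all but a bounded number of vertices on each side are adjacent to all but a bounded number of vertices of the other side, so $G$ is very dense and a Hall/matching argument controls $\ivc(G)$. The remaining graphs, with one small side, are finished directly. For $H=sP_1$ one simply uses that an $sP_1$-free bipartite graph has fewer than $s$ vertices on each side. For bound~(7)* for $H=K_{1,r}$, note that a $K_{1,r}$-free bipartite graph has maximum degree at most $r-1$, so $(S\cap Y)\cup N(S\cap X)$ is an independent vertex cover of size at most $(r-1)\vc(G)$, with a short refinement saving the final $-1$.

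The most delicate part of the upper-bound side is the ``$\ivc$-identical'' statement~(1)*. For $H\subseteq_i K_{1,3}^+$ I would use Alekseev's structure theorem (Lemma~\ref{l-alek}): a connected $K_{1,3}^+$-free bipartite graph is a path, an even cycle, or an almost complete bipartite graph; in a path $v_1\cdots v_n$ the set $\{v_2,v_4,\dots\}$ is an independent minimum vertex cover, the same trick works for even cycles, and Lemma~\ref{almost-complete-bipartite} handles the remaining case (using that $\ivc$ and $\vc$ are additive over connected components). For $H\subseteq_i 2P_1+P_3=K_{1,2}+2P_1$ I would again call Lemma~\ref{lem:star+indep-free-bip-structure}, now with $r=s=2$: once both sides have at least five vertices, either $G$ has maximum degree at most $1$ (a disjoint union of $K_1$'s and $K_2$'s), or each side has at most one vertex missing two or more vertices of the other side, which makes $G$ dense enough that Hall's condition yields a matching saturating the smaller side, forcing $\vc(G)=\ivc(G)$; the graphs with a small side are handled separately using $(2P_1+P_3)$-freeness. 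Proving these statements with equality rather than mere boundedness, and getting the additive constants in~(2)*--(5) exactly right, is where I expect the bulk of the work to lie --- in particular the $2P_1+P_3$ identical case.

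On the lower-bound side I would use two families of trees. The double star $D_{s,s}$ is $(2P_2,C_4)$-free, has $\vc(D_{s,s})=2$ and, being connected with a balanced bipartition, $\ivc(D_{s,s})=s+1$ (for a connected bipartite graph the only independent vertex covers are the two sides of its bipartition); its induced subgraphs are exactly the graphs $D_{p,q}$, $K_{1,p}+qP_1$ and $tP_1$ (with $p,q\le s$ and $t\le 2s$). The second family is the tree obtained from two stars $K_{1,n}$ by taking their centres to be the two endpoints of a path on four vertices: it has $\vc=3$, $\ivc=n+2$, and is $D_{2,2}$-free because it has no two adjacent vertices of degree at least three. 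Now suppose $H$ is an induced subgraph of neither $K_{1,r}+rP_1$ nor $K_{1,r}^+$ for any $r\ge 1$; we may assume $H$ is bipartite, as otherwise $H$-free bipartite is the class of all bipartite graphs. If $H$ is an induced subgraph of no $D_{s,s}$, then the first family consists of $H$-free bipartite graphs and witnesses $\ivc$-unboundedness; otherwise $H\in\{D_{p,q}\}\cup\{K_{1,p}+qP_1\}\cup\{tP_1\}$, and since all of $K_{1,p}+qP_1$, $tP_1$ and the graphs $D_{p,q}$ with $\min(p,q)\le 1$ (which are stars or graphs $K_{1,q+1}^+$) are allowed, we must have $H=D_{p,q}$ with $p,q\ge 2$; such $H$ contains an induced $D_{2,2}$, so the $D_{2,2}$-free second family witnesses $\ivc$-unboundedness, completing the ``only if'' of~(i). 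The same two families give the ``only if'' of~(ii): $D_{2,2}$ itself has $\ivc=3>2=\vc$, and every proper induced subgraph of $D_{2,2}$ is an induced subgraph of $K_{1,3}^+$ or of $2P_1+P_3$, so any $H$ that is an induced subgraph of neither is either not an induced subgraph of $D_{2,2}$ --- and then $D_{2,2}$ itself is an $H$-free witness --- or equal to $D_{2,2}$ --- and then the second family with $n\ge 2$ is.
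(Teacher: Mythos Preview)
Your high-level plan is correct and agrees with the paper on most points: Lemmas~\ref{lem:vc-K1r+sP1} and~\ref{lem:vc-K1r+} give~(i) ``$\Leftarrow$'' and bounds~(8),(9); the $K_{1,3}^+$ identical case is exactly Alekseev plus Lemma~\ref{almost-complete-bipartite}; your two tree families are precisely the paper's $D_s^1=D_{s,s}$ and $D_s^2$, and your organisation of the unboundedness argument (listing the induced subgraphs of $D_{s,s}$ and then invoking $D_{2,2}$-freeness of $D_s^2$) is an equivalent repackaging of the paper's. Your~(ii) ``$\Rightarrow$'' via the fact that every proper induced subgraph of $D_{2,2}$ already sits inside $K_{1,3}^+$ or $2P_1+P_3$ is a clean restatement of the paper's observation that $D_{2,2}$ is $(K_{1,4},K_{1,3}+P_1,3P_1+P_2,5P_1)$-free. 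For~(7)* the ``refinement'' is the averaging you allude to: bound both $(S\cap A)\cup N(S\cap B)$ and its mirror, each saving one from the overlap when $S$ is not independent.

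Where you genuinely diverge is the $2P_1+P_3$ identical case and the sharp bounds~(2)* and~(5)*. The paper does \emph{not} use Lemma~\ref{lem:star+indep-free-bip-structure} there; instead it runs a local swap argument on a minimum vertex cover~$S$. For $H=2P_1+P_3$: if $xy$ is an edge inside $G[S]$, then $(2P_1+P_3)$-freeness forces one of $x,y$ to have at most one private neighbour outside~$S$; swapping that endpoint for its private neighbour strictly decreases the number of edges in $G[S]$ (one checks the new vertex has no neighbour in~$S$), and iterating yields an independent minimum cover, with a short endgame when both endpoints have exactly one private neighbour. For~(5)*: either every $u\in S$ has a unique outside neighbour (swap all of $S\cap A$ into $B$), or some $u\in S\cap A$ has two outside neighbours, whence $(sP_1+P_3)$-freeness gives $|A\setminus S|\le s-1$ and $|A|\le(|S|-1)+(s-1)$. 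Bound~(2)* is a similar direct case analysis. Your structural route via Lemma~\ref{lem:star+indep-free-bip-structure} plus Hall does go through in the dense regime (with $r=s=2$ and both sides of size at least five, at most one vertex on each side misses two or more on the other, and Hall for the smaller side is then immediate), but the small-side residue you flag is where all the work ends up, and squeezing out the exact additive constants of~(2)* and~(5)* from the dichotomy is no shorter than the paper's swap arguments. For bound~(6), where only $\vc(G)+3s+2$ is claimed, the paper does proceed through Lemma~\ref{lem:star+indep-free-bip-structure} essentially as you propose.
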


\begin{proof}
We start by proving~{\sffamily\bfseries\upshape\mathversion{bold}{\ref{t-vc:i}}}.

\subparagraph*{\ref{t-vc:i}: ``$\Leftarrow$''.}
First suppose that~$H$ is an induced subgraph of $K_{1,r}+\nobreak rP_1$ or~$K_{1,r}^+$ for some~$r$, then Lemma~\ref{lem:vc-K1r+sP1} or~\ref{lem:vc-K1r+}, respectively, implies that the class of $H$-free bipartite graphs is $\ivc$-bounded.

\subparagraph*{\ref{t-vc:i}: ``$\Rightarrow$''.}
Now suppose that the class of $H$-free bipartite graphs is $\ivc$-bounded, that is, there is a function $f:\mathbb{Z}_{\geq 0} \to \mathbb{Z}_{\geq 0}$ such that $\ivc(G) \leq f(\vc(G))$ for all $H$-free bipartite graphs~$G$.
We will show that~$H$ is an induced subgraph of $K_{1,r}+\nobreak rP_1$ or~$K_{1,r}^+$ for some~$r$.

For $r \geq 1$, $s \geq 2$, let~$D_s^r$ denote the graph formed from~$2K_{1,s}$ and~$P_{2r}$ by identifying the two end-vertices of the~$P_{2r}$ with the central vertices of the respective~$K_{1,s}$'s (see also \figurename~\ref{fig:Dsr-for-vc}; note that $D_s^1=D_{s,s}$).
It is easy to verify that $\vc(D_s^r)=r+1$ and $\ivc(D_s^r)=r+s$.
Note that, for every $r\geq 1$, $$\ivc(D_{f(r+1)}^r)=r+f(r+1)=r+f(\vc(D_{f(r+1)}^r))>f(\vc(D_{f(r+1)}^r)).$$
Hence, for every $r \geq 1$, $D_{f(r+1)}^r$ cannot be $H$-free.
Note that for $r \geq 1$ and $s,t \geq 2$, if $s \leq t$ then~$D_s^r$ is an induced subgraph of~$D_t^r$.
Therefore, for each $r \geq 1$, there must be an~$s$ such that~$D_s^r$ is not $H$-free.
In other words, for each $r \geq 1$, $H$ must be an induced subgraph of~$D_s^r$ for some~$s$.

\begin{figure}
\begin{center}
\begin{subfigure}{0.33\textwidth}
\begin{tikzpicture}[scale=0.7]
\coordinate (x1) at (0,0);
\coordinate (x2) at (1,0);
\coordinate (x3) at (2,0);
\coordinate (x4) at (4,0);
\coordinate (y1) at (1,3);
\coordinate (y2) at (3,3);
\coordinate (y3) at (4,3);
\coordinate (y4) at (5,3);

\draw (x1) -- (y1) -- (x2) (x3) -- (y1) -- (x4) -- (y4) (y2) -- (x4) -- (y3);

\draw[fill=black](x1) circle [radius=3pt];
\draw[fill=black](x2) circle [radius=3pt];
\draw[fill=black](x3) circle [radius=3pt];
\draw[fill=black](x4) circle [radius=3pt];
\draw[fill=white](y1) circle [radius=3pt];
\draw[fill=white](y2) circle [radius=3pt];
\draw[fill=white](y3) circle [radius=3pt];
\draw[fill=white](y4) circle [radius=3pt];

\end{tikzpicture}
\end{subfigure}
\qquad
\begin{subfigure}{0.33\textwidth}
\begin{tikzpicture}[scale=0.7]
\coordinate (x1) at (0,0);
\coordinate (x2) at (1,0);
\coordinate (x3) at (2.5,0);
\coordinate (x4) at (4.5,0);
\coordinate (y1) at (0.5,3);
\coordinate (y2) at (2.5,3);
\coordinate (y3) at (4,3);
\coordinate (y4) at (5,3);

\draw (x1) -- (y1) -- (x2) (y1) -- (x3) -- (y2) -- (x4) (y3) -- (x4) -- (y4);

\draw[fill=black](x1) circle [radius=3pt];
\draw[fill=black](x2) circle [radius=3pt];
\draw[fill=black](x3) circle [radius=3pt];
\draw[fill=black](x4) circle [radius=3pt];
\draw[fill=white](y1) circle [radius=3pt];
\draw[fill=white](y2) circle [radius=3pt];
\draw[fill=white](y3) circle [radius=3pt];
\draw[fill=white](y4) circle [radius=3pt];

\end{tikzpicture}
\end{subfigure}
\end{center}
\caption{The graphs~$D_3^1=D_{3,3}$ and~$D_2^2$.
The black vertices form a minimum independent vertex cover.}
\label{fig:Dsr-for-vc}
\end{figure}
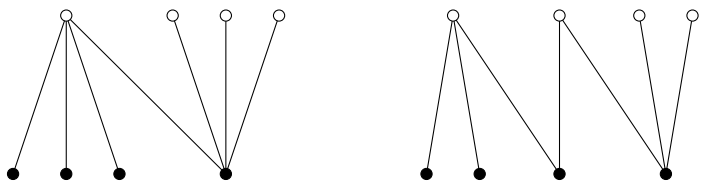

In particular, the above means that we may assume that~$H$ is an induced subgraph of~$D_t^1$ for some $t \geq 1$.
If~$H$ contains at most one of the central vertices of the stars that form the~$D_t^1$, then~$H$ is an induced subgraph of~$K_{1,t}+\nobreak tP_1$ and we are done, so we may assume~$H$ contains both central vertices.
If one of these central vertices has at most one neighbour that is not a central vertex, then~$H$ is an induced subgraph of~$K_{1,t+1}^+$, and we are done.
We may therefore assume that~$H$ contains an induced~$D_2^1$.
However, for every $s\geq 2$, $D_s^2$ is $D_2^1$-free and therefore $H$-free, a contradiction.
This completes the proof of~\ref{t-vc:i}.

\medskip
\noindent
We now prove {\sffamily\bfseries\upshape\mathversion{bold}{\ref{t-vc:ii}}}.
Let~$H$ be a graph.

\subparagraph*{\ref{t-vc:ii}: ``$\Leftarrow$''.}
First suppose that~$H$ is an induced subgraph of~$K_{1,3}^+$ or of~$2P_1+\nobreak P_3$.

\subparagraph*{Case 1. $H=K_{1,3}^+$.\\}
Let~$G$ be a $K_{1,3}^+$-free bipartite graph.
We may assume without loss of generality that~$G$ is connected.
By Lemma~\ref{l-alek}, $G$ is either a path, a cycle or an almost complete bipartite graph.
For the first two cases it is readily seen that $\ivc(G)=\vc(G)$.
For the third case we apply Lemma~\ref{almost-complete-bipartite}.

\subparagraph*{Case 2. $H=2P_1+\nobreak P_3$.\\}
Let~$G$ be a $(2P_1+\nobreak P_3)$-free bipartite graph with bipartite classes~$A$ and~$B$, and let~$S$ be a minimum vertex cover of~$G$.
Suppose~$S$ is not an independent set.
Then~$S$ contains two adjacent vertices~$x$ and~$y$, say $x\in A$ and $y\in B$.
Let~$I_x$ and~$I_y$ be the set of neighbours of~$x$ and~$y$, respectively, in $V(G)\setminus S$.
As~$S$ has minimum size, $I_x$ and~$I_y$ are both nonempty.
Moreover, as~$G$ is bipartite, $I_x\cap I_y=\emptyset$.
As the vertices of $G-S$ form an independent set, no two vertices in $I_x\cup I_y$ are adjacent.
Then $|I_x|\leq 1$ or $|I_y|\leq 1$, say $|I_x|\leq 1$, as otherwise~$x$, two vertices of~$I_x$ and two vertices of~$I_y$ form an induced~$2P_1+\nobreak P_3$ in~$G$, a contradiction.

Let $I_x=\{u\}$.
If $|I_y|\geq 2$, we replace~$S$ by $S'=(S\setminus \{x\}) \cup \{u\}$ to obtain another minimum vertex cover of~$G$.
Moreover, $u$ has no neighbours in~$S'$.
In order to see this, let~$z$ be a neighbour of~$u$ in~$S'$, and let~$v_1$,~$v_2$ be two vertices in~$I_y$.
As $V(G)\setminus S$ is an independent set, $u$ is non-adjacent to~$v_1$ and~$v_2$.
As $v_1,v_2,x,z$ all belong to~$A$, they are also mutually non-adjacent.
Hence, the set $\{v_1,v_2,x,u,z\}$ induces a~$2P_1+\nobreak P_3$ in~$G$, a contradiction.
We conclude that replacing~$x$ by~$u$ yields a minimum vertex cover~$S'$ such that~$G[S']$ contains at least one fewer edge than~$G[S]$.

Let now~$S^*$ be a minimum vertex cover such that~$G[S^*]$ has as few edges as possible.
If~$S^*$ is independent, then we have proven that $\ivc(G) = \vc(G)$.
Suppose~$S^*$ is not an independent set.
Then~$S^*$ contains two adjacent vertices~$x^*$ and~$y^*$, say $x^*\in A$ and $y^*\in B$.
By the choice of~$S^*$ and the above discussion, we conclude that each of~$x^*$ and~$y^*$ has exactly one (private) neighbour in $V(G)\setminus S^*$.
Since~$G$ is $(2P_1+\nobreak P_3)$-free, this means that $G-S^*$ has at most three vertices.
The latter implies that at least one of $|A\setminus S^*|$ and $|B\setminus S^*|$, say $|A\setminus S^*|$, has at most one vertex.
But now, since $|B\cap S^*| \geq 1$, it follows that $\ivc(G) \geq \vc(G) = |S^*| = |A \cap S^*| + |B \cap S^*| \geq |A \cap S^*| + |A\setminus S^*| = |A| \geq\ivc(G)$, and hence $\ivc(G) = \vc(G)$.

\subparagraph*{\ref{t-vc:ii}: ``$\Rightarrow$''.}
Now suppose that~$H$ is not an induced subgraph of~$K_{1,3}^+$ or of~$2P_1+\nobreak P_3$.
By~\ref{t-vc:i}, we need only consider the case when~$H$ is an induced subgraph of $K_{1,r}+\nobreak rP_1$ or~$K_{1,r}^+$ for some~$r\geq 1$.
Hence, $H$ contains an induced subgraph from the set $\{K_{1,4},K_{1,3}+\nobreak P_1,\allowbreak 3P_1+\nobreak P_2,5P_1\}$.
Let~$G$ be the double star~$D_{2,2}$ with two leaves for each central vertex, that is, $G$ is the tree on vertices $x,y,u_1,u_2,v_1,v_2$ and edges $xy$, $u_1x$, $u_2x$, $v_1y$, $v_2y$.
We note that~$G$ is bipartite and $(K_{1,4},K_{1,3}+\nobreak P_1,3P_1+\nobreak P_2,5P_1)$-free and thus $H$-free, while $\vc(G)=2$ and $\ivc(G)=3$.
This completes the proof of~\ref{t-vc:ii}.

\medskip
\noindent
We now consider Statements~\ref{t-vc:1}--\ref{t-vc:8}.
Statement~\ref{t-vc:1} follows directly from~\ref{t-vc:ii}, whereas Lemmas~\ref{lem:vc-K1r+sP1} and~\ref{lem:vc-K1r+} imply statements~\ref{t-vc:7} and~\ref{t-vc:8}, respectively.
We prove Statements~\ref{t-vc:2}--\ref{t-vc:6} separately.

\subparagraph*{\ref{t-vc:2}.}
Let~$G$ be a $(K_{1,3}+\nobreak P_1)$-free bipartite graph with partition classes~$A$ and~$B$.
If~$G$ has maximum degree~$2$, then~$G$ is the disjoint union of paths and even cycles, implying that $\ivc(G)=\vc(G)$.
Hence, we may assume that~$G$ contains a vertex~$u$ of degree at least~$3$, say $u\in A$.
Note that~$G$ must be connected, as otherwise~$u$, three neighbours of~$u$ and a vertex from another connected component of~$G$ induce a~$K_{1,3}+\nobreak P_1$ in~$G$.
By the $(K_{1,3}+\nobreak P_1)$-freeness of~$G$, we also find that~$u$ is adjacent to every vertex of~$B$.

First suppose that $|B|\geq 5$.
Consider an arbitrary vertex $u'\in A\setminus \{u\}$.
We find that~$u'$ is adjacent to all but at most two vertices of~$B$, as otherwise~$u$, $u'$ and three non-neighbours of~$u'$ in~$B$ induce a~$K_{1,3}+\nobreak P_1$ in~$G$, a contradiction.
As $|B|\geq 5$, this means that~$u'$ has at least three neighbours in~$B$.
Again by $(K_{1,3}+\nobreak P_1)$-freeness, we find that~$u'$ is also adjacent to all vertices of~$B$.
As~$u'$ is an arbitrary vertex, we conclude that~$G$ is a complete bipartite graph, which implies that $\ivc(G)=\vc(G)$.

Now suppose that $|B|\leq 4$.
As~$B$ is an independent vertex cover of~$G$, we find that $\ivc(G)\leq 4$.
If $\vc(G)=3$, then $\ivc(G) \leq \vc(G) + 1$ (so Statement~\ref{t-vc:2} holds).
If $\vc(G)\leq 1$, then $\ivc(G)=\vc(G)$.
Hence, we may assume that $\vc(G)=2$.
Let $S=\{x,y\}$ be a minimum vertex cover.
If~$S$ is independent, then $\ivc(G)=\vc(G)=2$, so we may assume that~$x$ and~$y$ are adjacent.
As~$G$ is connected, bipartite, and $V(G)\setminus S$ is an independent set, we find that~$G$ is a double star.
As~$G$ is $(K_{1,3}+\nobreak P_1)$-free and contains a vertex of degree at least~$3$, and moreover~$S$ is a minimum vertex cover of~$G$, we find that $G=D_{1,2}$ or $G=D_{2,2}$.
Then $\ivc(G)=\vc(G)$ holds in the former case and $\ivc(G)=\vc(G)+1$ holds in the latter case.
Hence we have proven the bound of~\ref{t-vc:2} and  also, as demonstrated by the graph~$D_{2,2}$, that this bound is tight.

\subparagraph*{\ref{t-vc:3}.}
For some $s \geq 5$, let~$G$ be an $sP_1$-free bipartite graph with partition classes~$A$ and~$B$.
If $\vc(G)\leq 1$, then $\ivc(G)=\vc(G)$ and thus $\ivc(G)\leq \vc(G)+s-3$.
Suppose that $\vc(G)\geq 2$.
As~$G$ is $sP_1$-free, $|A|\leq s-1$ holds.
As~$A$ is an independent vertex cover, this means that $\ivc(G)\leq s-1= 2 + s-3 \leq \vc(G)+s-3$.

\subparagraph*{\ref{t-vc:4b} and~\ref{t-vc:4}.}
Note that the bound for~\ref{t-vc:4} immediately implies~\ref{t-vc:4b}, so it is sufficient to prove Statement~\ref{t-vc:4}.
For some $s\geq 3$, let~$G$ be a $(sP_1+\nobreak P_3)$-free bipartite graph with partition classes~$A$ and~$B$.
Let~$S$ be a minimum vertex cover of~$G$.
First suppose that each vertex of~$S$ has at most one neighbour in $V(G)\setminus S$.
As~$S$ has minimum size, this means that each vertex of~$S$ has exactly one neighbour in $V(G)\setminus S$.
We replace every $u\in S\cap A$ with its unique neighbour in $V(G)\setminus S$, and note that his neighbour belongs to~$B$.
This results in a vertex cover~$S^*$ of the same size as~$S$, but which is a subset of~$B$.
This implies that~$S^*$ is independent.
Thus in this case it follows that $\ivc(G)=\vc(G)$.

Now suppose that~$S$ contains a vertex~$u$, say $u\in A$, with at least two neighbours in $V(G) \setminus S$.
As~$G$ is $(sP_1+\nobreak P_3)$-free and $V(G) \setminus S$ is independent, this means that at most $s-1$ vertices of $G-S$ belong to~$A$.
First suppose that $S\subseteq A$.
Then, as~$A$ is an independent set, we find that~$S$ is independent and thus $\ivc(G)=\vc(G)$.
Now suppose that $S\setminus A\neq \emptyset$, so $|A\cap S|\leq |S|-1$.
As~$A$ is an independent vertex cover of~$G$, we find that $\ivc(G)\leq |A|=|A\cap S| + |A\cap V(G-S)|\leq |S|-1+s-1=\vc(G)+s-2$.

The above bound is tight, as demonstrated by the graph~$D_{s-1,s-1}$, which is $(sP_1+\nobreak P_3)$-free and has $\vc(D_{s-1,s-1})=2$, whereas $\ivc(D_{s-1,s-1})=s-1+1=s=\vc(D_{s-1,s-1})+s-2$.

\subparagraph*{\ref{t-vc:5}.}
For $s \geq 2$, let~$G$ be a $(K_{1,3}+\nobreak sP_1)$-free bipartite graph with partition classes~$A$ and~$B$.
If~$A$ or~$B$ has fewer than $\max\{3s+2, \vc(G)+s\}$ vertices, then we can take the smallest partition class as an independent vertex cover to obtain the desired bound.
We may therefore assume that both~$A$ and~$B$ have size at least $\max\{3s+2, \vc(G)+s\}$.

If every vertex in~$G$ has degree at most~$2$, then~$G$ is $K_{1,3}$-free and by~\ref{t-vc:1} we find that $\ivc(G) = \vc(G)$.
By Lemma~\ref{lem:star+indep-free-bip-structure}, we may therefore assume that fewer than~$s$ vertices of~$A$ have more than $s-1$ non-neighbours in~$B$.
We will show that this leads to a contradiction.

Let~$S$ be a minimum vertex cover of~$G$.
Since~$A$ and~$B$ each have at least $\vc(G)+s$ vertices, there must be a set~$S'$ of $\vc(G)+\nobreak 1$ vertices in~$A$ that has at least  $\vc(G)+\nobreak 1$ neighbours in~$B$.
If a vertex $x \in V(G)$ has degree at least $\vc(G)+\nobreak 1$, then $|N(x)|>|S|$, so $x \in S$.
Therefore every vertex of~$S'$ must be in~$S$, contradicting the fact that $|S'| =\vc(G)+\nobreak 1>\vc(G)=|S|$.

\subparagraph*{\ref{t-vc:6}.}
For some $r\geq 4$, let~$G$ be a $K_{1,r}$-free bipartite graph with partition classes~$A$ and~$B$.
Let~$S$ be a minimum vertex cover of~$G$.
If~$S$ is independent, then $\ivc(G)=\vc(G)$.
Suppose that~$S$ is not independent.
Let $A^*\subseteq A$ be the set of neighbours of the vertices in $S\cap B$.
Note that $|(S\cap A)\cap A^*|\geq 1$, as~$S$ is not independent.
Also note that $(S\cap A)\cup A^*$ is an independent vertex cover of~$G$.
Hence $\ivc(G)\leq |(S\cap A)\cup A^*|=|S\cap A|+|A^*|-|(S\cap A)\cap A^*| \leq |S\cap A|+(r-1)|S\cap B|-1$.
Similarly, $\ivc(G) \leq |S\cap B|+(r-1)|S\cap A|-1$.
Therefore $\ivc(G) \leq \frac{1}{2}(|S\cap A|+(r-1)|S\cap B|-1+|S\cap B|+(r-1)|S\cap A|-1) = \frac{1}{2}(r|S \cap A| + r|S \cap B| -2) = \frac{1}{2}(r|S|)-1=\frac{r}{2}|S|-1$.
To see that this is tight, note that~$D_{r-2,r-2}$ is a $K_{1,r}$-free bipartite graph with $\vc(D_{r-2,r-2})=2$ and $\ivc(D_{r-2,r-2})=r-1=\frac{r}{2}\vc(D_{r-2,r-2})-1$.
\qedllncs
\end{proof}

\section{Feedback Vertex Set}\label{sec:fvs}

In this section we prove Theorems~\ref{thm:indep-fvs} an~\ref{t-fvsi} as part of a more general theorem.
Recall that a graph has an independent feedback vertex set if and only if it is near-bipartite.
We first show the following lemma.

\begin{lemma}\label{lem:fvs-P_1+P_2}
If~$G$ is a $(P_1+\nobreak P_2)$-free near-bipartite graph, then $\ifvs(G) = \fvs(G)$.
\end{lemma}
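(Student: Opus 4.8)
The plan is to exploit the very strong structure of $(P_1+P_2)$-free graphs: such a graph has the property that its vertex set is the union of a clique and an independent set, or, put differently, in every $(P_1+P_2)$-free graph the complement is $P_3$-free, so the non-edges form a disjoint union of cliques — equivalently $G$ itself is a \emph{complete multipartite graph together with isolated structure}; more precisely, a connected $(P_1+P_2)$-free graph is a complete multipartite graph. First I would reduce to the connected case, noting that $\ifvs$ and $\fvs$ are both additive over connected components (and that a $(P_1+P_2)$-free graph can have at most one component containing an edge, since an edge in one component and a vertex in another would form an induced $P_1+P_2$; an isolated vertex contributes nothing to either parameter). So assume $G$ is connected and $(P_1+P_2)$-free, hence $G$ is complete multipartite with partition classes $V_1,\dots,V_k$.

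Next I would determine which complete multipartite graphs are near-bipartite and compute both parameters. If $k\le 2$ then $G$ is bipartite, so $\fvs(G)=\ifvs(G)=0$ (a bipartite graph, in fact a complete bipartite one, has feedback vertex sets but the minimum one can be nonzero only if it contains a cycle — $K_{2,2}=C_4$ does; I need to be careful here). Let me instead split on the sizes of the classes. If at most one class has size $\ge 2$ and $k\le 2$, then $G$ is a star or edgeless and $\fvs=\ifvs=0$. In general, for a complete multipartite graph $G=K_{n_1,\dots,n_k}$, deleting all classes but the two largest and, within those two, all but one vertex each, leaves $K_{1,1}$ or a single vertex; one can show a minimum feedback vertex set is obtained by keeping only two vertices that are \emph{non-adjacent} when possible — i.e. the optimal strategy is to leave behind either a single edge or a single non-edge (two vertices in one class). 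Keeping a non-edge is free of cycles and keeps the solution independent, so the natural candidate independent feedback vertex set is "delete everything except two vertices in the largest class, if that class has size $\ge 2$; otherwise delete everything except one vertex". I would verify this candidate is a feedback vertex set (the remainder has $\le 1$ edge) and is independent, and then argue no smaller feedback vertex set of any kind exists, which gives $\ifvs(G)\le \fvs(G)$, hence equality since the reverse inequality is trivial.

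The counting step is the routine but slightly fiddly part: for $G=K_{n_1,\dots,n_k}$ with $n_1\ge n_2\ge\cdots$, I expect $\fvs(G)=n-\max\{n_1,\,n_2+1\}$ where $n=\sum n_i$, the two cases corresponding to "leave two vertices of the top class" versus "leave one vertex from each of the two top classes when $n_1=1$" — and in every case the witnessing set of that size can be taken independent, because when $n_1\ge 2$ we leave a non-edge, and when $n_1=1$ the whole graph is a complete graph $K_k$ whose minimum feedback vertex set has size $k-2$, realised by deleting any $k-2$ vertices, and any two remaining vertices form an edge so here $\fvs=\ifvs=k-2$ trivially as every vertex subset of $K_k$ is "independent" only if it has $\le 1$ vertex — wait, that forces $\ifvs(K_k)=k-1$, not $k-2$. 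So $K_k$ for $k\ge 3$ is a genuine worry.

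The hard part, then, is exactly this: complete multipartite graphs with all classes of size $1$, i.e. complete graphs $K_k$, satisfy $\fvs(K_k)=k-2$ but $\ifvs(K_k)=k-1$ (an independent set in $K_k$ has at most one vertex, so to make the rest a forest — here a single vertex — we must delete $k-1$ vertices), so $\ifvs\ne\fvs$ for $k\ge 3$. Thus the lemma \emph{as stated must be using that $K_3=C_3$ and $K_k$ for $k\ge 4$ are not the obstruction because they are not $(P_1+P_2)$-free with the relevant cycles}, or — more likely — I have mis-stated the structure. Indeed $K_k$ \emph{is} $(P_1+P_2)$-free (it has no two nonadjacent vertices at all for... no: $K_k$ has no induced $P_1+P_2$ since it has no non-edges), and $K_3$ is near-bipartite ($\{v\}$ plus an edge), with $\fvs(K_3)=1$ and $\ifvs(K_3)=1$ (delete one vertex, leaving an edge — fine, $\{v\}$ is independent!). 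For $K_4$: near-bipartite? partition into independent set $\{v\}$ and forest on a triangle — no, a triangle isn't a forest. $\{v,w\}$ can't be independent. So $K_4$ is \emph{not} near-bipartite, and is excluded from the class. Good — so the real content is: the only complete multipartite near-bipartite graphs are those with at most one class of size $\ge 2$ plus possibly... no — $K_{2,2,2}$: independent set $\{$one vertex$\}$, rest is $K_{2,2,1}$-ish... I will need to characterise these, but crucially, whenever $G$ is complete multipartite \emph{and} near-bipartite, I will show the minimum feedback vertex set can be chosen independent, because the presence of any triangle already rules $G$ out unless removing a sparse independent piece kills all triangles, and in the surviving cases the greedy "leave two non-adjacent vertices or one vertex" construction works and is independent. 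So the plan stands; the main obstacle is pinning down precisely which complete multipartite graphs are near-bipartite and checking the parameter computation in each, with $K_3$ (where the trivial independent choice happens to work) being the only complete graph that survives.
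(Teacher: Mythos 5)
Your opening move is the same as the paper's: $(P_1+P_2)$-freeness means $\overline{G}$ is $P_3$-free, so $G$ is complete multipartite, and the hypothesis of near-bipartiteness must then be used to rule out the bad cases (you correctly spot that $K_4$, where $\ifvs\neq\fvs$, is excluded because it is not near-bipartite). However, the execution has genuine gaps. First, your description of minimum feedback vertex sets in complete multipartite graphs is wrong: a maximum induced forest in $K_{n_1,\ldots,n_k}$ with $k\geq 2$ is not ``a single edge or a single non-edge'' but a star consisting of a largest class together with one vertex of another class, so $\fvs(K_{n_1,\ldots,n_k})=n-(n_1+1)$; your formula $n-\max\{n_1,n_2+1\}$ already fails for $K_{2,3}$, where $\fvs=1$ (delete one vertex of the small class, leaving $K_{1,3}$) rather than $2$. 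Second, your candidate witness ``delete everything except two vertices of the largest class'' is in general neither minimum nor independent: in $K_{2,3}$ the deleted set contains the whole size-$2$ class plus a vertex of the size-$3$ class, and vertices in different classes are adjacent. So as written the proposal does not establish $\ifvs(G)\leq\fvs(G)$.

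Third, and most importantly, the step that carries the entire proof --- determining exactly which complete multipartite graphs are near-bipartite and verifying equality on those --- is only announced (``I will need to characterise these\ldots I will show\ldots''), not carried out. This is where the paper's proof does its work: since near-bipartite graphs are $3$-colourable, $k\leq 3$; the independent set $I$ with $G-I$ a forest lies inside one class $V_i$, and if $k=3$ then some class $V_j$ with $j\neq i$ must be a singleton, since otherwise $G-I$ contains a $C_4$. Hence $G$ is a complete bipartite graph or a complete bipartite graph plus a dominating vertex. In the first case every feedback vertex set must miss at most one vertex of $V_1$ or of $V_2$, so $\fvs(G)\geq |V_2|-1$, while $|V_2|-1$ vertices of $V_2$ form an independent feedback vertex set; in the second case one additionally argues that some class must be entirely deleted (to kill triangles), giving $\fvs(G)\geq |V_2|$, while $V_2$ itself is an independent feedback vertex set. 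Your proposal would need both the corrected extremal computation and this case analysis to be a complete proof.
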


\begin{proof}
Let~$G$ be a $(P_1+\nobreak P_2)$-free near-bipartite graph.
Note that~$\overline{G}$ is a $P_3$-free graph, so~$\overline{G}$ is a disjoint union of cliques.
It follows that~$G$ is a complete multi-partite graph, say with a partition of its vertex sets into~$k$ non-empty independent sets $V_1,\ldots,V_k$.
We may assume that $k \geq 2$, otherwise~$G$ is an edgeless graph, in which case $\ifvs(G)=\fvs(G)=0$ and we are done.
Since~$G$ is near-bipartite, it contains an independent set~$I$ such that $G-I$ is a forest.
Note that $I \subseteq V_i$ for some $i \in \{1,\ldots,k\}$.
Since near-bipartite graphs are $3$-colourable, it follows that $k \leq 3$.
Furthermore, if $k=3$, then $|V_j|=1$ for some $j \in \{1,2,3\} \setminus \{i\}$, otherwise $G-I$ would contain an induced~$C_4$, a contradiction.
In other words~$G$ is either a complete bipartite graph or the graph formed from a complete bipartite graph by adding a dominating vertex.

First suppose that~$k=2$, so~$G$ is a complete bipartite graph.
Without loss of generality assume that $|V_1| \geq |V_2| \geq 1$.
Let~$S$ be a feedback vertex set of~$G$.
If there are two vertices in $V_1 \setminus S$ and two vertices in $V_2 \setminus S$, then these vertices would induce a~$C_4$ in $G-S$, a contradiction.
Therefore~$S$ must contain all but at most one vertex of~$V_1$ or all but at most one vertex of~$V_2$, so $\fvs(G) \geq \min\{|V_1|-1,|V_2|-1\}=|V_2|-1$.
Let~$I$ be a set consisting of $|V_2|-1$ vertices of~$V_2$.
Then~$I$ is independent and $G-I$ is a star, so~$I$ is an independent feedback vertex set.
It follows that $\ifvs(G) \leq |V_2|-1$.
Since $\fvs(G) \leq \ifvs(G)$, we conclude that $\ifvs(G)=\fvs(G)$ in this case.

Now suppose that~$k=3$, so~$G$ is obtained from a complete bipartite graph by adding a dominating vertex.
Without loss of generality assume that $|V_1| \geq |V_2| \geq |V_3|=1$.
Let~$S$ be a feedback vertex set of~$G$.
By the same argument as in the $k=2$ case, $S$ must contain all but at most one vertex of~$V_1$ or all but at most one vertex of~$V_2$.
If there is a vertex in $V_i \setminus S$ for all $i \in \{1,2,3\}$, then these three vertices would induce a~$C_3$ in $G-S$, a contradiction.
Therefore~$S$ must contain every vertex in~$V_i$ for some $i \in \{1,2,3\}$.
Since $|V_1| \geq |V_2| \geq |V_3|=1$, it follows that $|S| \geq \min\{|V_2|-1+|V_3|,|V_2|\} = |V_2|$.
Therefore $\fvs(G) \geq |V_2|$.
Now~$V_2$ is an independent set and $G-V_2$ is a star, so~$V_2$ is an independent feedback vertex set.
It follows that $\ifvs(G) \leq |V_2|$.
Since $\fvs(G) \leq \ifvs(G)$, we conclude that $\ifvs(G)=\fvs(G)$.\qedllncs
\end{proof}

\begin{lemma}\label{lem:fvs-K1r}
If~$r \geq 1$ and~$G$ is a $K_{1,r}$-free near-bipartite graph, then $\ifvs(G) \leq (2r^2-5r+3) \fvs(G)$.
\end{lemma}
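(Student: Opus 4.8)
The plan is to reduce the problem to a bounded-size situation and then argue about a fixed vertex cover of the (few) vertices of high degree. First I would handle the trivial cases: if $\fvs(G) = 0$ then $G$ is a forest and an independent feedback vertex set of size $0$ works, so we may assume $\fvs(G) \geq 1$. Since $G$ is $K_{1,r}$-free, every vertex has degree at most $r - 1$, so $\Delta(G) \le r-1$. This is the crucial structural consequence: in a graph of maximum degree at most $r-1$, once we delete a feedback vertex set $S$ with $|S| = \fvs(G)$, the ``damage'' done by the vertices of $S$ is local and of bounded branching, which is exactly what we need to convert $S$ into an independent set without blowing up its size by more than a constant factor depending on $r$.

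The key step is the following swap/expansion argument. Let $S$ be a minimum feedback vertex set, so $|S| = \fvs(G)$ and $F = G - S$ is a forest. If $S$ is already independent we are done, so suppose some edges of $G[S]$ remain. The idea is to replace $S$ by $S' = N_G(S) \setminus S' $-type set — more precisely, to ``push'' each vertex of $S$ onto a bounded-size set of nearby vertices of $F$ that (i) forms an independent set and (ii) still hits every cycle. Here is where the degree bound enters twice: first, $|N[S]| \le |S| \cdot r$, so $N[S]$ has at most $r \cdot \fvs(G)$ vertices; second, inside $G[N[S]]$ we can find an independent feedback vertex set of $G[N[S]]$ of size at most (number of vertices), and more carefully, we only need to cover the cycles of $G$, all of which — after we keep everything outside $N[S]$ untouched — reduce to cycles living in the bounded-degree subgraph induced on a bounded neighbourhood. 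I would make this precise by showing that $G - N[S]$ is a forest (since it is an induced subgraph of the forest $F$), and then that any independent feedback vertex set of $G[N[S]]$, together with the empty set outside, is an independent feedback vertex set of $G$. Finally, $G[N[S]]$ is itself $K_{1,r}$-free with maximum degree at most $r-1$ on at most $r \cdot \fvs(G)$ vertices; I would bound its independent feedback vertex number by a linear function of its vertex count — using that a maximum-degree-$(r-1)$ graph on $n$ vertices has an independent set, hence an independent feedback vertex set obtained greedily, of controlled size — and chase the constants to land at $2r^2 - 5r + 3$.

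The arithmetic target $2r^2 - 5r + 3 = (2r-3)(r-1)$ suggests the intended bound comes from $|N[S]| \le r \cdot \fvs(G)$ (or a slightly sharper count of neighbours, namely each of the $\fvs(G)$ vertices contributes at most $r-1$ neighbours plus itself, and one can shave using that some neighbours are shared or already counted) multiplied by a per-component factor of roughly $2r - 3$ coming from how many vertices of a bounded-degree subgraph one must delete to make the remainder both a forest and the deleted set independent. I would therefore aim to prove: $G[N[S]]$ admits an independent feedback vertex set of size at most $(2r-3) \cdot |N[S]| / r \le (2r-3)(r-1)\fvs(G)$, perhaps by an explicit greedy colouring/peeling argument on each connected component of $G[N[S]]$ exploiting $\Delta \le r-1$.

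The main obstacle I anticipate is the step that converts ``$G[N[S]]$ has few vertices and small max degree'' into a \emph{small independent} feedback vertex set of $G[N[S]]$ — one must simultaneously guarantee the deleted set is independent (which a naive vertex-count bound ignores) and that what remains is acyclic, and getting the constant down to exactly $(2r-3)(r-1)$ rather than something cruder like $|N[S]|$ will require care: likely one keeps a large independent subset of $F \cap N[S]$ untouched (a bipartition class of a spanning forest, giving roughly half the vertices) and deletes the rest, and the factor $2r-3$ reflects balancing the two halves of a near-bipartite structure against the degree bound. I would double-check the base cases $r = 1, 2, 3$ separately, where the bound $2r^2 - 5r + 3$ equals $0, 1, 6$ respectively ($r=1$: $G$ edgeless; $r=2$: $G$ a disjoint union of paths and cycles, where deleting one vertex per cycle is already independent).
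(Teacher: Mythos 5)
Your proposal has two genuine flaws, one factual and one structural. First, the claim that $K_{1,r}$-freeness gives $\Delta(G)\leq r-1$ is false: $K_{1,r}$-freeness only bounds the size of an \emph{independent} set in a neighbourhood, not the degree itself (the diamond $K_4$ minus an edge is $K_{1,3}$-free and near-bipartite but has vertices of degree $3$). The correct way to get a degree bound, and the one the paper uses, is to combine $K_{1,r}$-freeness with the near-bipartite partition $V_1,V_2,V_3$ into independent sets: a vertex of $V_i$ has at most $r-1$ neighbours in each other class, so $\Delta(G)\leq 2(r-1)$. Your entire neighbourhood count $|N[S]|\leq r\cdot\fvs(G)$, and the subsequent constant-chasing towards $(2r-3)(r-1)$, rests on the false degree bound. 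Second, and more seriously, the reduction to $G[N[S]]$ does not work: it is not true that an independent feedback vertex set of $G[N[S]]$ extends (by adding nothing outside) to a feedback vertex set of $G$. A cycle of $G$ may pass through a vertex of $S$ and otherwise live almost entirely outside $N[S]$; restricted to $N[S]$ it is not a cycle at all, so it need not be hit. For instance, if $G$ is a single long cycle and $S$ consists of one of its vertices, then $G[N[S]]$ is a $P_3$, whose independent feedback vertex set is empty, yet the empty set is certainly not a feedback vertex set of $G$. Your own final paragraph concedes that the step producing a small independent feedback vertex set of the local subgraph is missing; but even granting it, the localisation itself is unsound.

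For comparison, the paper's argument avoids both problems by working globally with the three colour classes: it keeps $S'=S$ and iteratively replaces each vertex $s\in S'\cap(V_2\cup V_3)$ by vertices of $V_1$. The point is that removing such an $s$ from the feedback vertex set creates at most $\binom{2r-2}{2}=2r^2-5r+3$ new induced cycles (since $s$ has at most $2r-2$ neighbours in the forest $G-S'$, and any two of them lie on at most one induced cycle through $s$), and every cycle of $G$ must meet $V_1$ (because $V_2\cup V_3$ induces a forest), so one vertex of $V_1$ per new cycle suffices. Iterating until $S'\subseteq V_1$ yields an independent feedback vertex set of size at most $(2r^2-5r+3)\fvs(G)$, which is exactly where the constant $\binom{2r-2}{2}$ comes from -- not from a count of $|N[S]|$.
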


\begin{proof}
Fix integers $k \geq 0$ and $r \geq 1$.
Suppose~$G$ is a $K_{1,r}$-free near-bipartite graph with a feedback vertex set~$S$ such that $|S|=k$.
Since~$G$ is near-bipartite, $V(G)$ can be partitioned into an independent set~$V_1$ and a set $V(G) \setminus V_1$ that induces a forest in~$G$.
Since forests are bipartite, we can partition $V(G) \setminus V_1$ into two independent sets~$V_2$ and~$V_3$.

Suppose $x \in V_i$ for some $i \in \{1,2,3\}$.
Then~$x$ has no neighbours in~$V_i$ since~$V_i$ is an independent set.
For $j \in \{1,2,3\}\setminus \{i\}$, the vertex~$x$ can have at most $r-\nobreak 1$ neighbours in~$V_j$, otherwise~$G$ would contain an induced~$K_{1,r}$.
It follows that $\deg(x) \leq 2(r-1)$ for all $x \in V(G)$.

Let $S'=S$.
Let $F'=V(G) \setminus S'$, so~$G[F']$ is a forest.
To prove the lemma, we will iteratively modify~$S'$ until we obtain an independent feedback vertex set~$S'$ of~$G$ with $|S'| \leq (2r^2-5r+3)|S|$.
Every vertex $u \in S'$ has at most $2r-2$ neighbours in~$F'$.
Consider two neighbours $v,w$ of~$u$ in~$F'$.
As~$F'$ is a forest, there is at most one induced path in~$F'$ from~$v$ to~$w$, so there is at most one induced cycle in $G[F' \cup \{u\}]$ that contains all of~$u$, $v$ and~$w$.
Therefore $G[F' \cup \{u\}]$ contains at most $\binom{2r-2}{2}=\frac{1}{2}(2r-2)(2r-2-1)=2r^2-5r+3$ induced cycles.
Note that every cycle in~$G$ contains at least one vertex of~$V_1$.
Therefore, if $s \in S' \cap (V_2 \cup V_3)$, then we can find a set~$X$ of at most $2r^2-5r+3$ vertices in $V_1 \setminus S'$ such that if we replace~$s$ in~$S'$ by the vertices of~$X$, then we again obtain a feedback vertex set.
Repeating this process iteratively, for each vertex we remove from $S' \cap (V_2 \cup V_3)$, we add at most $2r^2-5r+3$ vertices to $S' \cap V_1$.
We stop the procedure once $S' \cap (V_2 \cup V_3)$ becomes empty, at which point we have produced a feedback vertex set~$S'$ with $|S'| \leq (2r^2-5r+3)|S|$.
Furthermore, at this point $S' \subseteq V_1$, so~$S'$ is independent.
It follows that $\ifvs(G) \leq (2r^2-5r+3)\fvs(G)$.\qedllncs
\end{proof}

Note that all near-bipartite graphs are $3$-colourable (use one colour for the independent set and the two other colours for the forest).
We prove the following lemma.

\begin{lemma}\label{lem:Cr-fvs-and-oct}
Let $k \geq 3$.
The class of $C_k$-free near-bipartite graphs is $\ifvs$-unbounded and $\ioct$-unbounded.
\end{lemma}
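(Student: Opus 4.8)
The plan is to exhibit, for each fixed $k \geq 3$, a family of $C_k$-free near-bipartite graphs whose ordinary feedback vertex set (and odd cycle transversal) number is bounded by a constant, but whose independent feedback vertex set (and independent odd cycle transversal) number grows without bound. The natural candidate is a family built around a long induced path or a large "book" of long paths sharing a common pair of endpoints (or a common single vertex), so that a small set of "hub" vertices destroys all cycles while any \emph{independent} transversal is forced to pick many path-interior vertices.

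Concretely, I would take the graph $G_m$ obtained as follows: start with two adjacent vertices $a, b$ (or a single vertex if one prefers the $\ioct$ version to be cleaner), and attach $m$ internally-disjoint paths of length $\ell$ between $a$ and $b$, where $\ell$ is chosen larger than $k$ so that none of the cycles created has exactly $k$ vertices. One checks that $G_m$ is near-bipartite: deleting $\{a,b\}$ (an independent set once we drop the edge $ab$, or a two-vertex set inducing a single edge, handled by deleting just one of them together with one interior vertex per path) leaves a disjoint union of paths, hence a forest; so $\fvs(G_m)$ and $\oct(G_m)$ are bounded by a small constant independent of $m$. For $C_k$-freeness, every cycle in $G_m$ is the union of two of the attached paths and so has length $2\ell$, while any proper subpath-based cycle is ruled out since the paths are internally disjoint; choosing $2\ell \neq k$ and $\ell > k$ handles all cases uniformly. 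The key point is the lower bound on the \emph{independent} transversal: an independent feedback vertex set $S$ cannot contain both $a$ and a neighbour of $a$, and more to the point, to break all $\binom{m}{2}$ cycles while staying independent one is forced either to take both $a$ and $b$ (impossible if they are adjacent and we keep the edge, or if we need independence) or to hit the interior of all but at most one of the $m$ paths; either way $|S| \geq m - O(1)$, so $\ifvs(G_m) \to \infty$ and likewise $\ioct(G_m) \to \infty$.

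The main technical care is in reconciling two competing demands: the family must simultaneously be $C_k$-free for the \emph{given} $k$ and yet genuinely near-bipartite, and the two endpoints, if adjacent, create a small cycle $C_{2\ell}$ but also block using $\{a,b\}$ as an independent set. I expect the cleanest route is to \emph{not} make $a$ and $b$ adjacent — use $m$ induced paths of length $\ell$ between non-adjacent $a, b$, so $G_m$ is bipartite (hence certainly near-bipartite and $3$-colourable), every cycle has length exactly $2\ell$, and $\{a\}$ alone is already a feedback vertex set / odd cycle transversal (since $G_m - a$ is a forest), giving $\fvs(G_m) = \oct(G_m) = 1$. Then any independent transversal avoiding the obvious failure must, for each of the $m$ cycles $C_{2\ell}$, contain an interior vertex, and since the paths are internally disjoint these contributions are distinct, yielding $\ifvs(G_m), \ioct(G_m) \geq m$.

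The step I expect to be the real obstacle is verifying the lower bound $\ifvs(G_m) \geq m$ (equivalently $\ioct(G_m) \geq m$) \emph{carefully}, because an independent set is allowed to contain $a$ itself, and one must argue that even then it must still hit each path-interior: if $a \in S$ then $S$ contains no neighbour of $a$, so on each path the first interior vertex is unavailable, but the cycle through paths $i$ and $j$ still needs a vertex of $S$ strictly interior to one of those two paths; a short counting/pigeonhole argument over the $\binom{m}{2}$ cycles then forces all but at most one path to receive an interior vertex of $S$. Once $C_k$-freeness, near-bipartiteness with bounded $\fvs$/$\oct$, and this lower bound are all in hand, the unboundedness of both $\ifvs$ and $\ioct$ on $C_k$-free near-bipartite graphs follows immediately, proving the lemma.
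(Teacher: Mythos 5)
There is a genuine gap: your construction does not exhibit unboundedness at all, because every cycle of $G_m$ passes through the hub $a$ (and through $b$). Hence the single vertex $a$ -- which is trivially an independent set -- is already an independent feedback vertex set, so $\ifvs(G_m)=\fvs(G_m)=1$ in both of your variants, not $\ifvs(G_m)\geq m$. Your pigeonhole argument for the lower bound breaks exactly here: you say that when $a\in S$ the cycle through paths $i$ and $j$ ``still needs a vertex of $S$ strictly interior to one of those two paths'', but that cycle contains $a$ itself and is therefore already transversed. The $\ioct$ half is even worse off: in your preferred (non-adjacent) variant $G_m$ is bipartite, so it has no odd cycles and $\ioct(G_m)=\oct(G_m)=0$; in the adjacent variant the only odd cycles (when $\ell$ is even) are the $(\ell+1)$-cycles, which again all pass through $a$, so $\ioct(G_m)=1$. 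Cycles of even length contribute nothing to an $\ioct$ lower bound.

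The idea you are missing is that the cycles must \emph{not} all share a vertex; instead one needs two adjacent hubs each carrying its own private bundle of cycles. The paper takes $2s$ disjoint copies of $P_{2k}$, attaches both endpoints of $s$ of them to a vertex $u$ and both endpoints of the other $s$ to a vertex $v$, and adds the edge $uv$. Then every induced cycle is a $C_{2k+1}$, which is odd and different from $C_k$, so the graph is $C_k$-free, near-bipartite, and feedback vertex sets coincide with odd cycle transversals on it (so one family settles both $\ifvs$ and $\ioct$). Now $\{u,v\}$ is a feedback vertex set, giving $\fvs=\oct=2$, but an independent transversal contains at most one of the adjacent vertices $u,v$; omitting, say, $u$ forces it to hit each of the $s$ cycles through $u$, which pairwise meet only in $u$, so $\ifvs=\ioct\geq s+1$. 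Your theta-/book-type graph cannot be repaired by adjusting path lengths, because the ``all cycles through one vertex'' feature is structural; you would have to duplicate the hub-plus-bundle gadget as the paper does (and make the cycles odd) before any unboundedness argument can go through.
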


\begin{proof}
For $r,s \geq 2$, let~$S_s^r$ denote the graph constructed as follows (see also \figurename~\ref{fig:C_r-fvs-and-oct}).
Start with the graph that is the disjoint union of~$2s$ copies of~$P_{2r}$, and label these copies $U^1,\ldots,U^s,V^1,\ldots,V^s$.
Add a vertex~$u$ adjacent to both endpoints of every~$U^i$ and a vertex~$v$ adjacent to both endpoints of every~$V^i$.
Finally, add an edge between~$u$ and~$v$.

\begin{figure}
\begin{center}
\begin{tikzpicture}[scale=0.5]
\draw (-6.5,1) -- (6.5,1) (-6.5,1) -- (-1,-1) (-6.5,1) -- (-4,-1) (-6.5,1) -- (-5,-1) (-6.5,1) -- (-8,-1) (-6.5,1) -- (-9,-1) (-6.5,1) -- (-12,-1) (6.5,1) -- (1,-1) (6.5,1) -- (4,-1) (6.5,1) -- (5,-1) (6.5,1) -- (8,-1) (6.5,1) -- (9,-1) (6.5,1) -- (12,-1) (1,-1) -- (1,-2) -- (2,-3) -- (3,-3) -- (4,-2) -- (4,-1) (5,-1) -- (5,-2) -- (6,-3) -- (7,-3) -- (8,-2) -- (8,-1) (9,-1) -- (9,-2) -- (10,-3) -- (11,-3) -- (12,-2) -- (12,-1) (-1,-1) -- (-1,-2) -- (-2,-3) -- (-3,-3) -- (-4,-2) -- (-4,-1) (-5,-1) -- (-5,-2) -- (-6,-3) -- (-7,-3) -- (-8,-2) -- (-8,-1) (-9,-1) -- (-9,-2) -- (-10,-3) -- (-11,-3) -- (-12,-2) -- (-12,-1);
\draw [fill=white] (1,-1) circle [radius=4.2pt] (4,-1) circle [radius=4.2pt] (1,-2) circle [radius=4.2pt] (4,-2) circle [radius=4.2pt] (2,-3) circle [radius=4.2pt] (3,-3) circle [radius=4.2pt] (5,-1) circle [radius=4.2pt] (8,-1) circle [radius=4.2pt] (5,-2) circle [radius=4.2pt] (8,-2) circle [radius=4.2pt] (6,-3) circle [radius=4.2pt] (7,-3) circle [radius=4.2pt] (9,-1) circle [radius=4.2pt] (12,-1) circle [radius=4.2pt] (9,-2) circle [radius=4.2pt] (12,-2) circle [radius=4.2pt] (10,-3) circle [radius=4.2pt] (11,-3) circle [radius=4.2pt] (-4,-1) circle [radius=4.2pt] (-1,-2) circle [radius=4.2pt] (-4,-2) circle [radius=4.2pt] (-2,-3) circle [radius=4.2pt] (-3,-3) circle [radius=4.2pt] (-8,-1) circle [radius=4.2pt] (-5,-2) circle [radius=4.2pt] (-8,-2) circle [radius=4.2pt] (-6,-3) circle [radius=4.2pt] (-7,-3) circle [radius=4.2pt] (-12,-1) circle [radius=4.2pt] (-9,-2) circle [radius=4.2pt] (-12,-2) circle [radius=4.2pt] (-11,-3) circle [radius=4.2pt] (-10,-3) circle [radius=4.2pt] (-6.5,1) circle [radius=4.2pt];
\draw [fill=black] (6.5,1) circle [radius=4.2pt] (-1,-1) circle [radius=4.2pt] (-5,-1) circle [radius=4.2pt] (-6,-1) (-9,-1) circle [radius=4.2pt];
\node[above] at (-6.5,1.25) {$u$};
\node[above] at (6.5,1.25) {$v$};
\node[below] at (-10.5,-1.25) {$U^1$};
\node[below] at (-6.5,-1.25) {$U^2$};
\node[below] at (-2.5,-1.25) {$U^3$};
\node[below] at (2.5,-1.25) {$V^1$};
\node[below] at (6.5,-1.25) {$V^2$};
\node[below] at (10.5,-1.25) {$V^3$};
\end{tikzpicture}
\end{center}
\caption{The graph~$S_3^3$.}
\label{fig:C_r-fvs-and-oct}
\end{figure}

Every induced cycle in~$S_s^r$ is isomorphic to~$C_{2r+1}$, which is an odd cycle.
Thus a set $S\subseteq V(S_s^r)$ is a feedback vertex set for~$S_s^r$ if and only if it is an odd cycle transversal for~$S_s^r$.
It follows that $\fvs(S_s^r)=\oct(S_s^r)$ and $\ifvs(S_s^r)=\ioct(S_s^r)$.

Now~$\{u,v\}$ is a minimum feedback vertex set of~$S_s^r$, so $\fvs(S_s^r)=\oct(S_s^r)=2$.
However, any independent feedback vertex set~$S$ contains at most one vertex of~$u$ and~$v$; say it does not contain~$u$.
Then it must contain at least one vertex of each~$U^i$.
It follows that $\ifvs(S_s^r)=\ioct(S_s^r) \geq s+1$.
Since for every  $s \geq 2$, $k \geq 3$, the graph~$S_s^k$ is $C_k$-free, this completes the proof.\qedllncs
\end{proof}

We are now ready to prove the main result of this section, which immediately implies Theorems~\ref{thm:indep-fvs} and~\ref{t-fvsi}.
If an upper bound given in this theorem is tight, that is, if there exists an $H$-free near-bipartite graph~$G$ for which equality holds, we again indicate this by a~$*$ in the corresponding row (whereas the other upper bounds are not known to be tight).

\begin{theorem}\label{t-fvs}
Let~$H$ be a graph.
Then the following two statements hold:
\begin{enumerate}[(i)]
\renewcommand{\theenumi}{(\roman{enumi})}
\renewcommand\labelenumi{(\roman{enumi})}
\item\label{t-fvs:i} the class of $H$-free near-bipartite graphs is $\ifvs$-bounded if and only if~$H$ is isomorphic to~$P_1+\nobreak P_2$, a star or an edgeless graph.
\item\label{t-fvs:ii} for $H\neq K_{1,3}$, the class of $H$-free near-bipartite graphs is $\ifvs$-identical if and only if~$H$ is a (not necessarily induced) subgraph of~$P_3$.\\[-8pt]
\end{enumerate}
In particular, the following statements hold for every $H$-free near-bipartite graph~$G$: 
\begin{enumerate}[(1)*]
\renewcommand{\theenumi}{(\arabic{enumi})}
\renewcommand\labelenumi{(\arabic{enumi})\phantom{*}}
\refstepcounter{enumi}
\item[\textcolor{darkgray}{\sffamily\bfseries\upshape\mathversion{bold}{(1)*}}]\label{t-fvs:1} $\ifvs(G)=\fvs(G)$ if $H\subseteq P_3$
\refstepcounter{enumi}
\item[\textcolor{darkgray}{\sffamily\bfseries\upshape\mathversion{bold}{(2)*}}]\label{t-fvs:2} $\ifvs(G)\leq \fvs(G)+1$ if $H=4P_1$
\item\label{t-fvs:3} $\ifvs(G)\leq\fvs(G)+s-3$ if $H=sP_1$ for $s\geq 5$
\item\label{t-fvs:4} $\ifvs(G)\leq (2r^2-5r+3)\fvs(G)$ if $H=K_{1,r}$ for $r\geq 3$.
\end{enumerate}
\end{theorem}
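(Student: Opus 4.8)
The plan is to organise the argument around the two dichotomies~\ref{t-fvs:i} and~\ref{t-fvs:ii}; the four numbered statements then fall out as by-products. The upper-bound (``if'') directions rest on the three preceding lemmas, and the ``only if'' directions on explicit families of near-bipartite graphs.

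\emph{Upper bounds.} For the ``if'' direction of~\ref{t-fvs:i}: if $H=P_1+P_2$, Lemma~\ref{lem:fvs-P_1+P_2} gives $\ifvs(G)=\fvs(G)$; if $H=K_{1,r}$ is a star, Lemma~\ref{lem:fvs-K1r} gives $\ifvs(G)\le(2r^2-5r+3)\fvs(G)$, which is exactly Statement~\ref{t-fvs:4}; and if $H=sP_1$ is edgeless, then every $H$-free graph~$G$ has $\alpha(G)\le s-1$, and since near-bipartite graphs are $3$-colourable, $|V(G)|\le 3(s-1)$, so $\ifvs(G)$ is bounded by a constant. For Statements~\ref{t-fvs:2} and~\ref{t-fvs:3} I would sharpen this last step: if $\fvs(G)\le 1$ then $\ifvs(G)=\fvs(G)$, since a single vertex is an independent set, while if $\fvs(G)\ge 2$ then $\ifvs(G)\le\alpha(G)\le s-1\le\fvs(G)+s-3$; tightness for $s=4$ is witnessed by a small explicit near-bipartite graph with $\fvs=2$ and $\ifvs=3$. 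For the ``if'' direction of~\ref{t-fvs:ii} (equivalently Statement~\ref{t-fvs:1}): if $H\subseteq P_3$ then $H$ is an induced subgraph of $P_3$, of $P_1+P_2$ or of $3P_1$; the case $P_3$ is easy (the $P_3$-free near-bipartite graphs are disjoint unions of $K_1$'s, $K_2$'s and $K_3$'s, for which $\ifvs=\fvs$), the case $P_1+P_2$ is Lemma~\ref{lem:fvs-P_1+P_2}, and the case $3P_1$ follows because such graphs have $\alpha\le 2$ and hence bounded order, so a short case check gives $\ifvs=\fvs$.

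\emph{The ``only if'' direction of~\ref{t-fvs:i}.} Suppose the class of $H$-free near-bipartite graphs is $\ifvs$-bounded. If $H$ contains a cycle, it contains an induced cycle~$C_k$; every $C_k$-free graph is $H$-free, so Lemma~\ref{lem:Cr-fvs-and-oct} contradicts boundedness, and hence $H$ is a forest. A forest that is neither a star, nor edgeless, nor $P_1+P_2$ contains an induced copy of one of $P_4$, $2P_2$, $P_1+P_3$ or $2P_1+P_2$, so it suffices to show that for each of these four graphs~$F$ the class of $F$-free near-bipartite graphs is $\ifvs$-unbounded; this forces $H$ into the claimed list. For $F=P_4$: the cone $K_1+(C_4\cup sP_2)$ over one $4$-cycle and $s$ disjoint edges with a common apex~$w$ is $P_4$-free and near-bipartite, has $\fvs=2$ (delete~$w$ and one vertex of the $C_4$), while any independent feedback vertex set must avoid~$w$ and hence contains an independent vertex cover of the $C_4$ together with one endpoint of each $P_2$, giving $\ifvs=s+2$. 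For $F=2P_2$: a triangle $\{p,q,r\}$ with $s$ further vertices adjacent to each of the pairs $\{p,q\}$, $\{q,r\}$, $\{p,r\}$ is $2P_2$-free and near-bipartite (any two disjoint edges meet the triangle in distinct vertices, which are joined by a triangle edge), has $\fvs=2$ and $\ifvs=s+1$. For $F=P_1+P_3$ and $F=2P_1+P_2$ I would build similar triangle-based gadgets engineered to avoid precisely those induced subgraphs. \emph{This is where the main difficulty lies:} each gadget must simultaneously keep $\fvs$ equal to a constant, remain near-bipartite, and exclude the relevant small induced subgraph, so the ``apex over many disjoint cycles'' type of construction is unavailable (its $\fvs$ grows) and the gadgets have to be balanced carefully.

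\emph{The ``only if'' direction of~\ref{t-fvs:ii}.} Let $H\ne K_{1,3}$ with $H\not\subseteq P_3$. If $H$ is not a star, not edgeless and not $P_1+P_2$, then~\ref{t-fvs:i} already shows the class is $\ifvs$-unbounded, hence not $\ifvs$-identical. Otherwise $H$ is of bounded type but $H\not\subseteq P_3$, which (using $H\ne K_{1,3}$) leaves only $H=K_{1,r}$ with $r\ge 4$ or $H=sP_1$ with $s\ge 4$; it then suffices to exhibit one near-bipartite graph that is simultaneously $K_{1,4}$-free and $4P_1$-free and has $\ifvs>\fvs$, since it is then $K_{1,r}$-free for all $r\ge 4$ and $sP_1$-free for all $s\ge 4$. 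A small explicit example completes the dichotomy.
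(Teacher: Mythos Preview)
Your overall architecture matches the paper's: use Lemmas~\ref{lem:fvs-P_1+P_2}, \ref{lem:fvs-K1r}, \ref{lem:Cr-fvs-and-oct} for the easy directions, then reduce the ``only if'' of~\ref{t-fvs:i} to exhibiting $\ifvs$-unbounded families of near-bipartite graphs avoiding each minimal obstruction in $\{P_4,2P_2,P_1+P_3,2P_1+P_2\}$. Your $P_4$-free and $2P_2$-free constructions are both correct and different from the paper's, which is nice.

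The genuine gap is exactly where you flag it: you give no construction for the $(P_1+P_3)$-free and $(2P_1+P_2)$-free cases, and neither of your two gadgets covers them. Your cone $K_1+(C_4\cup sP_2)$ contains an induced $P_1+P_3$ (a vertex of some $P_2$ together with a $P_3$ in the $C_4$), and your triangle-with-pendants graph contains an induced $2P_1+P_2$ (an edge from a pendant to a triangle vertex, together with two pendants on the opposite side). So two separate new families would still be needed, and ``similar triangle-based gadgets'' is not yet a proof.

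The paper sidesteps this by using a single family $T_s$ for three of the four obstructions at once: take a path $x_1x_2x_3x_4$ and make an independent set of $s$ vertices complete to it. This graph has $\fvs(T_s)=3$ (remove $x_1,x_2,x_3$), while any independent feedback vertex set meets $\{x_1,x_2,x_3,x_4\}$ in at most two vertices and hence must delete all but one of the $s$ outside vertices, giving $\ifvs(T_s)\ge s-1$. Because $\overline{T_s}$ is $P_4+K_s$, one checks that $T_s$ is simultaneously $(P_1+P_3,\,2P_1+P_2,\,2P_2)$-free. Removing the edge $x_2x_3$ gives $T_s'$, whose complement is $C_4+K_s$, making it $P_4$-free with the same $\fvs$/$\ifvs$ behaviour. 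Two families instead of four, and the hard cases you were worried about come for free.

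Two smaller points. Your $3P_1$ case in~\ref{t-fvs:ii}``$\Leftarrow$'' does not need ``bounded order plus case check'': simply observe that any independent set has size at most~$2$, so $\ifvs(G)\le 2$; if $\fvs(G)\le 1$ the minimum feedback vertex set is already independent, and if $\fvs(G)=2$ then $\fvs(G)=\ifvs(G)=2$. For the tightness of~\ref{t-fvs:2} and the ``only if'' of~\ref{t-fvs:ii} you promise ``a small explicit example'' but do not give one; the paper uses a specific seven-vertex $4P_1$-free near-bipartite graph with $\fvs=2$ and $\ifvs=3$, and since $4P_1\subseteq_i K_{1,4}$ this single example handles all $K_{1,r}$ with $r\ge 4$ and all $sP_1$ with $s\ge 4$ at once.
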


\begin{proof}
We start by proving~{\sffamily\bfseries\upshape\mathversion{bold}{\ref{t-fvs:i}}}.

\subparagraph*{\ref{t-fvs:i}: ``$\Leftarrow$''.}
First suppose that~$H$ is isomorphic to~$P_1+\nobreak P_2$, a star or an edgeless graph.
If $H=P_1+\nobreak P_2$, then the class of $H$-free near-bipartite graphs is $\ifvs$-bounded by Lemma~\ref{lem:fvs-P_1+P_2}.
If~$H$ is isomorphic to a star or an edgeless graph, then~$H$ is an induced subgraph of~$K_{1,r}$ for some $r \geq 1$.
In this case the class of $H$-free near-bipartite graphs is $\ifvs$-bounded by Lemma~\ref{lem:fvs-K1r}.

\subparagraph*{\ref{t-fvs:i}: ``$\Rightarrow$''.}
Now suppose that the class of $H$-free near-bipartite graphs is $\ifvs$-bounded.
By Lemma~\ref{lem:Cr-fvs-and-oct}, $H$ must be a forest.
We will show that~$H$ is isomorphic to~$P_1+\nobreak P_2$, a star or an edgeless graph.

We start by showing that~$H$ must be $(P_1+\nobreak P_3,2P_1+\nobreak P_2,2P_2)$-free.
Let vertices $x_1,x_2,x_3,x_4$, in that order, form a path on four vertices.
For $s \geq 3$, let~$T_s$ be the graph obtained from this path by adding an independent set~$I$ on~$s$ vertices (see also \figurename~\ref{fig:T_5-for-fvs}) that is complete to the path and note that~$T_s$ is near-bipartite.
Then $\{x_1,x_2,x_3\}$ is a minimum feedback vertex set in~$T_s$.
However, if~$S$ is an independent feedback vertex set, then~$S$ contains at most two vertices in $\{x_1,x_2,x_3,x_4\}$.
Therefore~$S$ must contain at least $s-1$ vertices of~$I$, otherwise $T_s-S$ would contain an induced~$C_3$ or~$C_4$.
Therefore $\fvs(T_s)=3$ and $\ifvs(T_s) \geq s-1$.
Note that~$T_s$ is $(P_1+\nobreak P_3,2P_1+\nobreak P_2,2P_2)$-free (this is easy to see by casting to the complement and observing that~$\overline{T_s}$ is the disjoint union of a~$P_4$ and a complete graph).
Therefore~$H$ cannot contain $P_1+\nobreak P_3$, $2P_1+\nobreak P_2$ or~$2P_2$ as an induced subgraph, otherwise~$T_s$ would be $H$-free, a contradiction.

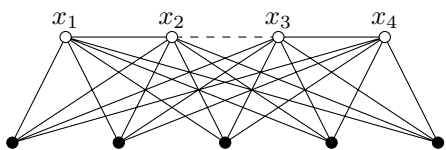
\begin{figure}
\begin{center}
\begin{tikzpicture}[scale=0.7,rotate=270]
\draw (1,4) -- (-1,3) (1,4) -- (-1,1) (1,4) -- (-1,-1) (1,4) -- (-1,-3)
(1,2) -- (-1,3) (1,2) -- (-1,1) (1,2) -- (-1,-1) (1,2) -- (-1,-3)
(1,0) -- (-1,3) (1,0) -- (-1,1) (1,0) -- (-1,-1) (1,0) -- (-1,-3)
(1,-2) -- (-1,3) (1,-2) -- (-1,1) (1,-2) -- (-1,-1) (1,-2) -- (-1,-3)
(1,-4) -- (-1,3) (1,-4) -- (-1,1) (1,-4) -- (-1,-1) (1,-4) -- (-1,-3)
(-1,3) -- (-1,1) (-1,-1) -- (-1,-3);
\draw [dashed] (-1,1) -- (-1,-1);
\draw [fill=white] (-1,3) circle [radius=3pt]
		(-1,1) circle [radius=3pt] 
		(-1,-1) circle [radius=3pt] 
		(-1,-3) circle [radius=3pt];
\draw [fill=black](1,4) circle [radius=3pt] 
		(1,2) circle [radius=3pt] 
		(1,0) circle [radius=3pt]
		(1,-2) circle [radius=3pt]
		(1,-4) circle [radius=3pt]; 
\node[above] at (-1,3) {$x_4$};
\node[above] at (-1,1) {$x_3$};
\node[above] at (-1,-1) {$x_2$};
\node[above] at (-1,-3) {$x_1$};
\end{tikzpicture}
\end{center}
\caption{\label{fig:T_5-for-fvs}The graphs~$T_5$ and~$T_5'$.
The edge~$x_2x_3$ is present in~$T_5$, but not in~$T_5'$.}
\end{figure}

Next, we show that~$H$ must be $P_4$-free.
For $s \geq 3$ let~$T_s'$ be the graph obtained from~$T_s$ by removing the edge~$x_2x_3$ (see also \figurename~\ref{fig:T_5-for-fvs}).
Then $\{x_1,x_2,x_3\}$ is a minimum feedback vertex set in~$T_s'$, so $\fvs(T_s') =3$.
By the same argument as for~$T_s$, we find that $\ifvs(T_s') \geq s-1$.
Now the complement~$\overline{T_s'}$ is the disjoint union of a~$C_4$ and a complete graph, so~$T_s'$ is $P_4$-free.
Therefore~$H$ cannot contain~$P_4$ as an induced subgraph.

We may now assume that~$H$ is a $(P_1+\nobreak P_3,2P_1+\nobreak P_2,2P_2,P_4)$-free forest.
If~$H$ is connected, then it is a $P_4$-free tree, so it is a star, in which case we are done.
We may therefore assume that~$H$ is disconnected.
We may also assume that~$H$ contains at least one edge, otherwise we are done.
Since~$H$ is $(2P_1+\nobreak P_2)$-free, it cannot have more than two components.
Since~$H$ is $2P_2$-free, one of its two components must be isomorphic to~$P_1$.
Since~$H$ is a $(P_1+\nobreak P_3)$-free forest, its other component must be isomorphic to~$P_2$.
Hence~$H$ is isomorphic to $P_1+\nobreak P_2$.
This completes the proof of~\ref{t-fvs:i}.

\medskip
\noindent
We now prove~{\sffamily\bfseries\upshape\mathversion{bold}{\ref{t-fvs:ii}}}.
Let~$H$ be a graph not isomorphic to~$K_{1,3}$.

\subparagraph*{\ref{t-fvs:ii}: ``$\Leftarrow$''.}
First suppose that~$H$ is a subgraph of~$P_3$.
If $H\subseteq_i P_1+\nobreak P_2$, then $\ifvs(G)=\fvs(G)$ for every $H$-free near-bipartite graph~$G$ by Lemma~\ref{lem:fvs-P_1+P_2}.
If $H\subseteq_i P_3$, then every $H$-free near-bipartite graph~$G$ is a disjoint union of complete graphs on at most three vertices, and hence $\ifvs(G)=\fvs(G)$ holds.
Finally, suppose that $H\subseteq_i 3P_1$.
Let~$G$ be a $3P_1$-free near-bipartite graph.
As~$G$ is $3P_1$-free, every minimum independent feedback vertex set of~$G$ has size at most~$2$.
Hence, every minimum feedback vertex set of~$G$ also has size at most~$2$.
Moreover, if it has size less than~$2$, then it is an independent feedback vertex set.
We conclude that $\ifvs(G)=\fvs(G)$.

\subparagraph*{\ref{t-fvs:ii}: ``$\Rightarrow$''.}
Now suppose that~$H$ is not a subgraph of~$P_3$.
Recall that we assume that $H\neq K_{1,3}$.
By~\ref{t-fvs:i}, we may then assume that $H=K_{1,r}$ for some $r\geq 4$ or $H=sP_1$ for some $s\geq 4$.
Consider the graph~$G$ in \figurename~\ref{fvs1}.
It is straightforward to check that~$G$ is $4P_1$-free and near-bipartite; $\{u,v\}$ is a minimum feedback vertex set (indeed $G-\{u,v\}$ is~$P_5$) while $\ifvs(G)=3$; for instance, $\{v,v_2,v_3\}$ is a minimum independent feedback vertex set of~$G$.
This completes the proof of~\ref{t-fvs:ii}.

\medskip
\noindent
We now consider Statements~\ref{t-fvs:1}--\ref{t-fvs:4}.
Statement~\ref{t-fvs:1} follows directly from Statement~\ref{t-fvs:ii}, whereas Lemma~\ref{lem:fvs-K1r} implies Statement~\ref{t-fvs:4}.
We prove Statements~\ref{t-fvs:2} and~\ref{t-fvs:3} below.

\subparagraph*{\ref{t-fvs:2} and~\ref{t-fvs:3}.}
First note that, as shown in the proof of Statement~\ref{t-fvs:ii}, the graph~$G$ in \figurename~\ref{fvs1} is $4P_1$-free, with $\fvs(G)=2$ and $\ifvs(G)=3$, so the bound in Statement~\ref{t-fvs:2} is tight.
It remains to prove that $\ifvs(G)\leq\fvs(G)+s-3$ if $H=sP_1$ with $s\geq 4$ (this proves the bounds in Statements~\ref{t-fvs:2} $(s=4)$ and~\ref{t-fvs:3} $(s\geq 5)$).
Let~$G$ be an $sP_1$-free near-bipartite graph.
If $\fvs(G)\leq 1$, then $\ifvs(G)=\fvs(G)$.
Hence, we may assume that $\fvs(G)\geq 2$.
As~$G$ is near-bipartite, $V(G)$ can be partitioned into three independent sets $V_1$, $V_2$,~$V_3$, such that $V_2\cup V_3$ induce a forest.
Hence, $V_1$ is an independent feedback vertex set.
As~$G$ is $sP_1$-free, $V_1$ has size at most~$s-1$.
This means that $\ifvs(G)\leq s-1 = 2 + s-3 \leq \fvs(G)+s-3$.
This completes the proof of Statements~\ref{t-fvs:2} and~\ref{t-fvs:3}.\qedllncs
\end{proof}

\begin{figure}
\begin{center}
\begin{tikzpicture}[xscale=0.8, yscale=0.8]
\draw (2,-2) -- (1,-1) -- (0,-1) -- (-2,-2) -- (0,1) -- (-2,2) -- (2,2) -- (2,-2) -- (-2,-2) -- (-2,2) (0,-1) -- (0,1) -- (2,-2);
\draw[fill=black] (0,1) circle [radius=3pt] (2,-2) circle [radius=3pt];
\draw[fill=white] (-2,-2) circle [radius=3pt] (-2,2) circle [radius=3pt] (2,2) circle [radius=3pt] (0,-1) circle [radius=3pt] (1,-1) circle [radius=3pt];
\node[right] at (0,1) {$u$};
\node[right] at (2,-2) {$v$};
\node[above left] at (0,-1) {$v_2$};
\node[left] at (-2,2) {$v_3$};
\end{tikzpicture}
\end{center}
\caption{An example of a $4P_1$-free near-bipartite graph~$G$ with $\ifvs(G)=\fvs(G)+1$, which shows that the bound in Theorem~\ref{t-fvs}\ref{t-fvs:2} is tight.}\label{fvs1}
\end{figure}
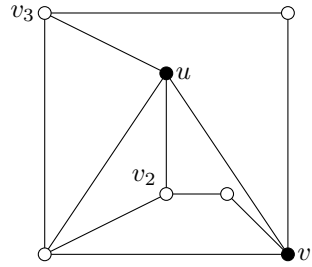

\section{Odd Cycle Transversal}\label{sec:oct}

In this section we prove Theorems~\ref{thm:indep-oct} and~\ref{t-octi} as part of a more general theorem.
Recall that a graph has an independent odd cycle transversal if and only if it is $3$-colourable.
Before proving the main result of this section, we first provide a sequence of auxiliary statements.

\begin{lemma}\label{lem:oct-P4}
If~$G$ is a $P_4$-free $3$-colourable graph, then $\ioct(G)=\oct(G)$.
\end{lemma}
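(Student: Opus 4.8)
The plan is to exploit the well-known structure of connected $P_4$-free graphs (cographs): every connected $P_4$-free graph on at least two vertices is the join of two smaller $P_4$-free graphs. As usual, it suffices to treat connected $G$, since both $\oct$ and $\ioct$ are additive over connected components. So assume $G$ is connected and $3$-colourable, hence $3$-colourable and $P_4$-free. First I would dispose of the trivial case $\oct(G)=0$: then $G$ is bipartite and connected and $P_4$-free, so $G$ is complete bipartite, and a minimum odd cycle transversal of size $0$ is of course independent, giving $\ioct(G)=\oct(G)=0$. (More simply, if $G$ is bipartite then the empty set works.)

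Now assume $\oct(G)\ge 1$, so $G$ is not bipartite and in particular $G=G_1 + G_2$ is a join of two nonempty $P_4$-free graphs (the join being the complement of a disjoint union). Because $G$ is $3$-colourable and the two sides of a join must receive disjoint colour sets, one side — say $G_2$ — must be $2$-colourable, i.e. bipartite, while $G_1$ is $1$-colourable on the remaining colour, i.e. $G_1$ is edgeless. Moreover $G_1$ being edgeless and joined to everything forces $G_2$ itself to be bipartite AND every vertex of $G_1$ to be a dominating vertex; in particular $|V(G_1)|=1$ is impossible to guarantee directly, but what matters is: let $I=V(G_1)$, an independent set, each vertex of which is complete to $V(G_2)$, and $G-I=G_2$ is bipartite. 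Hence $I$ is already an \emph{independent} odd cycle transversal, so $\ioct(G)\le |I|$. The key remaining point is to show $|I|$ can be taken to equal $\oct(G)$, i.e. that no odd cycle transversal is smaller than some such independent $I$: any odd cycle transversal $S$ must, since $G$ is not bipartite, intersect $V(G_1)\cup V(G_2)$ in a way that kills all odd closed walks; the structure (triangle through two vertices of $G_2$ in one bipartition class plus one vertex of $G_1$, etc.) should pin down $\oct(G)$ precisely as the minimum over the two sides of "number of vertices you must delete to make that side contribute no odd cycle together with the other."

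The clean way to finish is induction on $|V(G)|$ using the join decomposition directly on $\oct$ and $\ioct$ simultaneously. For a join $G = G_1 + G_2$ of nonempty cographs, both of which are $3$-colourable as induced subgraphs of $G$, a short case analysis on which of $G_1,G_2$ are edgeless/bipartite/arbitrary shows that an optimal (independent) odd cycle transversal always has the form: delete all of the smaller side, or recurse into one side. Concretely, $\oct(G_1+G_2) = \min\{\,|V(G_1)| + \oct'(G_2),\ |V(G_2)| + \oct'(G_1)\,\}$ where $\oct'$ measures deletions needed to reach an independent (edgeless) graph when the other side survives, and one checks the analogous formula holds verbatim with $\ioct$ in place of $\oct$; since the base cases (one vertex, or edgeless graphs, or bipartite graphs) satisfy $\ioct=\oct$, the equality propagates. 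I expect the main obstacle to be bookkeeping: correctly characterizing, for a join, exactly which vertices a minimum odd cycle transversal must contain — one must rule out "mixed" transversals that delete some vertices from each side — and verifying that the same minimizing configuration is simultaneously optimal for the independent version. Once the join recursion and its base cases are nailed down, the induction closes immediately, and restricting to connected components finishes the general statement.
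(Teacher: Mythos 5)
Your structural setup coincides with the paper's: reduce to connected $G$, use that the complement of a connected $P_4$-free graph on more than one vertex is disconnected, and conclude (when $G$ is not bipartite) that $V(G)$ splits into an independent set $I$ that is complete to a $P_4$-free bipartite graph $B=G-I$. However, what you defer as ``bookkeeping'' is exactly the content of the lemma, and two of your intermediate claims are wrong as stated. First, ``show $|I|$ can be taken to equal $\oct(G)$'' is false: let $B=2P_2$ and let $I$ be an independent set of $s\geq 3$ vertices complete to $B$; this graph is $P_4$-free and $3$-colourable, its only join decomposition has edgeless side $I$, yet $\oct(G)=2<s=|I|$ (delete one endpoint of each edge of $B$). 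The correct statement is $\oct(G)=\min\{|I|,\vc(B)\}$, because any transversal missing a vertex of $I$ and both endpoints of an edge of $B$ leaves a triangle; your recursion formula does not capture this (it conflates the two ways a deletion can make a join bipartite --- empty one side and make the other bipartite, or make both sides edgeless --- and on the example above it does not evaluate to $2$).

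Second, your assertion that ``the analogous formula holds verbatim with $\ioct$ in place of $\oct$'' is precisely the step that needs proof and is not automatic. An independent transversal realising the value $\vc(B)$ must be an \emph{independent} vertex cover of $B$, i.e.\ one needs $\ivc(B)=\vc(B)$; this fails for general bipartite graphs (e.g.\ the double star $D_{2,2}$), so it cannot follow from generic join bookkeeping. It holds here because $B$ is a $P_4$-free bipartite graph, hence a disjoint union of complete bipartite graphs, whose minimum vertex cover takes one side of each component and is therefore independent --- this use of $P_4$-freeness on the bipartite side is the crux of the paper's proof and is absent from your outline (your base-case check that bipartite graphs satisfy $\ioct=\oct$ concerns the wrong quantity). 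With $\oct(G)=\min\{|I|,\vc(B)\}$ and $\ivc(B)=\vc(B)$ supplied, your approach closes and is essentially the paper's argument; as written, the crux is missing.
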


\begin{proof}
Let~$G$ be a $P_4$-free $3$-colourable graph.
It suffices to prove the lemma component-wise, so we may assume that~$G$ is connected.
Note that~$G$ cannot contain any induced odd cycles on more than three vertices, as it is $P_4$-free.
Let $(V_1,V_2,V_3)$ be a partition of~$V(G)$ into independent sets.
We may assume that~$G$ is not bipartite, otherwise $\ioct(G)=\oct(G)=0$, in which case we are done.
As~$G$ is connected, $P_4$-free and contains more than one vertex, its complement~$\overline{G}$ must be disconnected.
Therefore we can partition the vertex set of~$G$ into two parts~$X_1$ and~$X_2$ such that~$X_1$ is complete to~$X_2$.
No independent set~$V_i$ can have vertices in both~$X_1$ and~$X_2$, so without loss of generality we may assume that $X_1=V_1$ and $X_2 = V_2 \cup V_3$.
Since~$G[X_2]$ is a $P_4$-free bipartite graph, it is readily seen that it is a disjoint union of complete bipartite graphs.

Note that $G-X_1$ is a bipartite graph, so~$X_1$ is an odd cycle transversal of~$G$.
Furthermore, $X_1$ is independent.
Now let~$S$ be a minimum vertex cover of~$G[X_2]$.
Observe that $G-S$ is bipartite, so~$S$ is an odd cycle transversal of~$G$.
Since~$G[X_2]$ is the disjoint union of complete bipartite graphs, for every component~$C$ of~$G[X_2]$, $S$ must contain one part of the bipartition of~$C$, or the other; by minimality of~$S$, it only contains vertices from one of the parts.
It follows that~$S$ is independent.

We now claim that every minimum odd cycle transversal~$S$ of~$G$ contains either~$X_1$ or a minimum vertex cover of~$G[X_2]$, both of which we have shown are independent odd cycle transversals; by the minimality of~$S$, this will imply that~$S$ is equal to one of them.
Indeed, suppose for contradiction that~$S$ is a minimum odd cycle transversal such that there is a vertex $x \in X_1 \setminus S$ and two adjacent vertices $y,z \in X_2 \setminus S$.
Then $G[\{x,y,z\}]$ is a~$C_3$ in $G-S$.
This contradiction completes the proof.\qedllncs
\end{proof}

\begin{lemma}\label{lem:oct-K13}
If~$G$ is a $K_{1,3}$-free $3$-colourable graph, then $\ioct(G) \leq 3\oct(G)$.
\end{lemma}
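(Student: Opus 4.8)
The plan is to start from a minimum odd cycle transversal $S$ of $G$ (so $G-S$ is bipartite and $|S|=\oct(G)$) together with a fixed proper $3$-colouring $(V_1,V_2,V_3)$ of $G$, and then to repeatedly ``push'' the transversal into the single colour class $V_1$ at a multiplicative cost of at most $3$, until it becomes independent. Two structural observations I would record first: (a) since $G$ is $K_{1,3}$-free and properly $3$-coloured, every vertex $v$ has $|N(v)\cap V_j|\le 2$ for each colour $j$ (three pairwise non-adjacent neighbours in one class would, with $v$, induce a $K_{1,3}$), so in particular $\deg_G(v)\le 4$; and (b) for any odd cycle transversal $S'$, the graph $G-S'$ is $K_{1,3}$-free and bipartite, hence has maximum degree at most $2$, i.e.\ it is a disjoint union of paths and even cycles.

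Next I would maintain a set $S'$ that is always an odd cycle transversal of $G$ (so $B:=G-S'$ is bipartite), starting with $S'=S$, and show that while $S'\cap(V_2\cup V_3)\ne\emptyset$ one can pick $s\in S'\cap(V_2\cup V_3)$ — say $s\in V_2$ — and replace $s$ in $S'$ by a set $X\subseteq V_1\cap V(B)$ with $|X|\le 3$ such that $(B-X)+s$ is again bipartite. Because $B$ is bipartite, every odd cycle of $B+s$ passes through $s$, and $N(s)\cap V(B)\subseteq V_1\cup V_3$ with at most two vertices in each of $V_1$ and $V_3$, so $s$ has at most four neighbours in $B$, spread among the path/even-cycle components of $B$. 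On each component $D$ on which $s$ has at least two neighbours I would delete the $V_1$-neighbours of $s$ lying on $D$ (at most two overall, and deleting one turns an even cycle into a path); if, after this, the remaining (at most two) $V_3$-neighbours $q_1,q_2$ of $s$ still lie in one component on opposite sides of its bipartition, I would additionally delete a $V_1$-vertex from the $q_1$–$q_2$ subpath — using the fact that an odd-length path between two $V_3$-vertices must contain a $V_1$-vertex (otherwise it is $2$-coloured by $V_2,V_3$ only and its endpoints are on the same side) — taking one such vertex per arc in the single case where $D$ was still an even cycle. A short case analysis of how the at most four neighbours of $s$ distribute over components shows the total number of deleted $V_1$-vertices is at most $3$, the extremal configuration being two $V_1$-neighbours of $s$ on one component and two $V_3$-neighbours on an even cycle.

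Each such replacement keeps $S'$ an odd cycle transversal, strictly decreases $|S'\cap(V_2\cup V_3)|$, and increases $|S'\cap V_1|$ by at most $3$; hence after $|S\cap(V_2\cup V_3)|$ iterations we obtain $S'\subseteq V_1$, which is independent, with $\ioct(G)\le|S'|\le|S\cap V_1|+3|S\cap(V_2\cup V_3)|\le 3|S|=3\oct(G)$.

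I expect the main obstacle to be the per-step bound of exactly $3$, and within it the even-cycle components of $B$: deleting a single vertex of such a cycle turns it into a path but need not separate the two troublesome $V_3$-neighbours of $s$, so one must either spend a second vertex there or exploit that a $V_1$-neighbour of $s$ on that same cycle — which must be removed anyway — already breaks the cycle into a path. Making the bookkeeping land on $3$ rather than $4$ is the only delicate point; observations (a) and (b) and the parity argument producing a $V_1$-vertex on an odd $V_3$–$V_3$ path are routine.
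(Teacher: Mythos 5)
Your overall scheme is the same as the paper's: fix a $3$-colouring $(V_1,V_2,V_3)$, maintain an odd cycle transversal $S'$, and repeatedly trade a vertex $s\in S'\cap(V_2\cup V_3)$ for a few vertices of $V_1\setminus S'$, using that every vertex has at most two neighbours in each other colour class (so degree at most $4$) and that $G-S'$ is a disjoint union of paths and even cycles. The difference is only in the bookkeeping: the paper settles for a replacement set of size at most $4$ (one $V_1$-vertex for each induced odd cycle through $s$, of which there are at most $4$) and recovers the factor $3$ by choosing $V_1$ with $|S\cap V_1|\ge |S\cap V_2|,|S\cap V_3|$, so that $|S\cap(V_2\cup V_3)|\le \tfrac{2}{3}|S|$ and the total is $|S|+3\cdot\tfrac{2}{3}|S|=3|S|$; you instead claim a replacement set of size at most $3$, which is what allows you to skip that normalisation.

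The gap is in that per-step bound of $3$. Your rule is: delete every $V_1$-neighbour of $s$ lying on a component carrying at least two neighbours of $s$, then up to two further $V_1$-vertices to separate the two $V_3$-neighbours. In precisely the configuration you call extremal---both $V_1$-neighbours $p_1,p_2$ of $s$ on one component $D_1$, both $V_3$-neighbours $q_1,q_2$ alone on an even cycle $D_2$ with both $q_1$--$q_2$ arcs odd---this rule deletes $p_1$ and $p_2$ in the first step and one $V_1$-vertex from each arc of $D_2$ in the second, i.e.\ four vertices, not three; the ``short case analysis'' that is supposed to deliver $3$ is asserted rather than carried out, and your own count of $3$ for this configuration contradicts the rule you state. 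The per-step claim is in fact true, and the repair is small: a component whose only neighbours of $s$ lie in $V_1$ needs at most one deletion (delete one of $p_1,p_2$ if their distance there is odd, none if it is even), since afterwards $s$ has at most one surviving neighbour on that component; with this refinement a genuine check of the possible distributions of the at most four neighbours gives at most $3$. Alternatively, keep your rule with bound $4$ and add the paper's choice of $V_1$, which again yields $3\oct(G)$. As written, however, your accounting only establishes $\ioct(G)\le |S\cap V_1|+4|S\cap(V_2\cup V_3)|\le 4\oct(G)$.
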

\begin{proof}
Fix an integer $k \geq 0$.
Let~$G$ be a $K_{1,3}$-free $3$-colourable graph with an odd cycle transversal~$S$ such that $|S|=k$.
Fix a partition of~$V(G)$ into three independent sets $V_1,V_2,V_3$.
Without loss of generality assume that $|S \cap V_1| \geq |S \cap V_2|, |S\cap V_3|$, so $|S \cap (V_2 \cup V_3)| \leq \frac{2k}{3}$.
Let $S'=S$ and note that $G-S'$ is bipartite by definition of odd cycle transversal.
To prove the lemma, we will iteratively modify~$S'$ until we obtain an independent odd cycle transversal~$S'$ of~$G$ with $|S'| \leq 3k$.

Suppose $x \in V_i$ for some $i \in \{1,2,3\}$.
Then~$x$ has no neighbours in~$V_i$ since~$V_i$ is an independent set.
For $j \in \{1,2,3\}\setminus \{i\}$, the vertex~$x$ can have at most two neighbours in~$V_j$, otherwise~$G$ would contain an induced~$K_{1,3}$.
It follows that $\deg(x) \leq 4$ for all $x \in V(G)$.

As $G-S'$ is a bipartite $K_{1,3}$-free graph, it is a disjoint union of paths and even cycles.
Every vertex $u \in S'$ has at most four neighbours in $V(G) \setminus S'$.
An induced odd cycle in $G-(S' \setminus \{u\})$ consists of the vertex~$u$ and an induced path~$P$ in $G-S'$ between two neighbours $v,w$ of~$u$ such that $P\cap N(u)$ does not contain any vertices apart from~$v$ and~$w$.
If~$u$ has~$q$ neighbours in some component~$C$ of $G-S'$, then there can be at most~$q$ such paths~$P$ that lie in this component.
It follows that there are at most four induced odd cycles in $G-(S' \setminus \{u\})$.
Note that every induced odd cycle in~$G$ contains at least one vertex in each~$V_i$.
Therefore, if $s \in S' \cap (V_2 \cup V_3)$, then we can find a set~$X$ of at most four vertices in $V_1 \setminus S'$ such that if we replace~$s$ in~$S'$ by the vertices of~$X$, then we again obtain an odd cycle transversal.
Repeating this process iteratively, for each vertex we remove from $S' \cap (V_2 \cup V_3)$, we add at most four vertices to $S' \cap V_1$, so~$|S'|$ increases by at most~$3$.
We stop the procedure once $S' \cap (V_2 \cup V_3)$ becomes empty, at which point we have produced an odd cycle transversal~$S'$ with $|S'| \leq |S| + 3|S \cap (V_2 \cup V_3)| \leq k + 3\times \frac{2k}{3}=3k$.
Furthermore, at this point $S' \subseteq V_1$, so~$S'$ is independent.
It follows that $\ioct(G) \leq 3 \oct(G)$.\qedllncs 
\end{proof}

\begin{lemma}\label{lem:oct-K_1r-to-K13+sP_1}
Let $r,s \geq 1$.
Suppose there is a function $f:\mathbb{Z}_{\geq 0} \to \mathbb{Z}_{\geq 0}$ such that $\ioct(G) \leq f(\oct(G))$ for every $K_{1,r}$-free $3$-colourable graph~$G$.
Then $\ioct(G) \leq \max\{\oct(G)r+r^2+3rs-2r,\allowbreak f(\oct(G))\}$ for every $(K_{1,r}+\nobreak sP_1)$-free $3$-colourable graph~$G$.
\end{lemma}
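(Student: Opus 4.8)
The plan is to distinguish whether or not $G$ contains an induced $K_{1,r}$, and to reduce the second case to the first via a structural analysis. If $G$ is $K_{1,r}$-free, then by hypothesis $\ioct(G) \le f(\oct(G))$ and we are done, so suppose $G$ has an induced $K_{1,r}$. Let $Z$ denote its vertex set, with centre $z_0$ and leaves $z_1,\dots,z_r$, so $|Z| = r+1$, and let $R := V(G) \setminus N[Z]$. Since $Z$ is anti-complete to $R$ and $G$ is $(K_{1,r}+\nobreak sP_1)$-free, $G[R]$ has no independent set of size $s$; as $G[R]$ is $3$-colourable, $|R| \le 3(s-1) = 3s-3$. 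In particular $Z \cup R$ is a dominating set of $G$ of size at most $(r+1)+(3s-3)$. It now suffices to prove $\ioct(G) \le r\cdot\oct(G)+r^2+3rs-2r$; this number equals $r\bigl(\oct(G)+(r+1)+(3s-3)\bigr)$, i.e.\ $r$ times an upper bound on $\oct(G)+|Z|+|R|$.

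The construction I would use fixes a minimum odd cycle transversal $S$ of $G$, a $3$-colouring $(V_1,V_2,V_3)$, and starts from the odd cycle transversal $B := S \cup Z \cup R$; this is an odd cycle transversal because $G-B$ is an induced subgraph of the bipartite graph $G-S$, and $|B| \le \oct(G)+|Z|+|R|$. I would then turn $B$ into an \emph{independent} odd cycle transversal by the replacement move from the proof of Lemma~\ref{lem:oct-K13}: while the current transversal $B'$ contains a vertex $u\in V_2\cup V_3$ having a neighbour still in $B'$, delete $u$ from $B'$ and re-insert a small set of vertices of $V_1$ meeting the (at most $|N_G(u)\setminus B'|$ many) induced odd cycles of $G-(B'\setminus\{u\})$ through $u$; each move keeps $B'$ an odd cycle transversal and eventually drives $B'$ into $V_1$, so that $B'$ becomes independent. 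If every move adds at most $r$ new vertices, the resulting independent transversal has size at most $r|B|$, which is exactly the required bound once one substitutes $|Z|=r+1$ and $|R|\le 3s-3$.

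The main obstacle, and the point at which the $(K_{1,r}+\nobreak sP_1)$-free hypothesis is genuinely used, is bounding the size of each replacement set by $r$. In the $K_{1,r}$-free setting of Lemma~\ref{lem:oct-K13} this is free because $G$ then has maximum degree $2(r-1)$; here $G$ may have vertices of unbounded degree, and the vertex $u$ being replaced can itself have many neighbours in the bipartite remainder. The remedy is structural: a vertex $w$ of large degree together with $r$ pairwise non-adjacent neighbours (which exist in a $3$-colourable graph once $\deg(w)$ is large) spans an induced $K_{1,r}$, so $(K_{1,r}+\nobreak sP_1)$-freeness forces all but at most $3(s-1)$ vertices of $G$ to lie in the closed neighbourhood of that copy. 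Combining this with the dominating $K_{1,r}$ above, and invoking Lemma~\ref{lem:star+indep-free-bip-structure} on the bipartite graph $G-S$ (which says a $(K_{1,r}+\nobreak sP_1)$-free bipartite graph with both sides large is either of maximum degree less than $r$ or almost complete bipartite), one shows that the part of $G$ relevant to the replacements is either genuinely of bounded degree — in which case each move costs less than $r$, since a degree-$r$ vertex of a bipartite graph already centres an induced $K_{1,r}$ — or is so close to a complete bipartite graph that one near-colour-class of it is already a small independent odd cycle transversal, finishing the proof directly. Making this dichotomy precise, in particular tracking the $r-1$ spare leaves of $Z$ and a fresh independent $s$-set that together witness a forbidden $K_{1,r}+\nobreak sP_1$ in the high-degree case, is the technical heart of the argument; the remaining bookkeeping is routine arithmetic yielding $\ioct(G) \le r\cdot\oct(G)+r^2+3rs-2r$.
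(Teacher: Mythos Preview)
Your proposal has a genuine gap at precisely the point you flag as the ``technical heart''. The replacement strategy of Lemma~\ref{lem:oct-K13} works there because $K_{1,3}$-freeness forces $G-S'$ to be a disjoint union of paths and even cycles, which is what bounds the number of induced odd cycles through a reinserted vertex~$u$ by $|N_G(u)\setminus B'|$. Without $K_{1,r}$-freeness this structural fact disappears: a vertex $u\in V_2$ can have arbitrarily many neighbours in both $V_1$ and $V_3$, and each triangle $u,a_i,c_j$ with $a_i\in V_1$, $c_j\in V_3$, $a_ic_j\in E(G)$ is an induced odd cycle through $u$; covering all of them by $V_1$-vertices may require all of $N(u)\cap V_1$, which is not bounded by~$r$. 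Your sketch in the final paragraph gestures toward a dichotomy (bounded degree versus near-complete-bipartite), but never shows how either branch yields the per-move bound of~$r$; in fact the high-degree branch would have to abandon the replacement scheme entirely and argue directly, which you do not carry out. As written, the proposal is not a proof.

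The paper's argument takes a cleaner route that avoids replacement altogether. Assuming each colour class $V_i$ is large, it applies Lemma~\ref{lem:star+indep-free-bip-structure} to the bipartite graphs $G[V_1\cup V_2]$ and $G[V_1\cup V_3]$ to show that all but fewer than~$s$ vertices of $V_2$ and of $V_3$ are near-complete to~$V_1$. Any edge between such ``good'' vertices of $V_2$ and $V_3$ then lies in many triangles with $V_1$, so the minimum odd cycle transversal~$T$ must cover that edge; hence $T$ restricted to $V_2\cup V_3$, together with the few bad vertices, is a vertex cover of $G[V_2\cup V_3]$ of size at most $\oct(G)+2(s-1)$. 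Lemma~\ref{lem:vc-K1r+sP1} then converts this to an \emph{independent} vertex cover~$T'$ of $G[V_2\cup V_3]$ of size at most $\oct(G)r+3rs-2r$, and~$T'$ is automatically an independent odd cycle transversal of~$G$. The key difference is that the paper reduces to the \emph{vertex cover} price-of-independence bound rather than trying to iterate a local replacement.
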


\begin{proof}
Fix $r,s \geq 1$ and $k \geq 0$.
Let~$G$ be a $(K_{1,r}+\nobreak sP_1)$-free $3$-colourable graph with a minimum odd-cycle transversal~$T$ on~$k$ vertices.
Fix a partition of~$V(G)$ into three independent sets $V_1,V_2,V_3$.
We may assume that $\oct(G) \geq 2$, otherwise $\ioct(G)=\oct(G)$ and we are done.
If $|V_i| \leq \max\{\oct(G)r+r^2+3rs-2r,\allowbreak f(\oct(G))\}$ for some $i \in \{1,2,3\}$, then deleting~$V_i$ from~$G$ yields a bipartite graph, so $\ioct(G) \leq \max\{\oct(G)r+r^2+3rs-2r,\allowbreak f(\oct(G))\}$ and we are done.
We may therefore assume that $|V_i| > \max\{\oct(G)r+r^2+3rs-2r,\allowbreak f(\oct(G))\}$ for all $i \in \{1,2,3\}$.
If~$G$ is $K_{1,r}$-free, then $\ioct(G) \leq f(\oct(G))$, so suppose that~$G$ contains an induced~$K_{1,r}$, say with vertex set~$X$.
Note that $|X|=r+1$, and each~$V_i$ can contain at most~$r$ vertices of~$X$, since every~$V_i$ is an independent set.

For every $i \in \{1,2,3\}$, there cannot be a set of~$s$ vertices in $V_i \setminus X$ that are anti-complete to~$X$, otherwise~$G$ would contain an induced $K_{1,r}+\nobreak sP_1$, a contradiction.
For every $i \in \{1,2,3\}$, since $|V_i| > \oct(G)r+r^2+3rs-2r \geq r^2+3rs$, it follows that $|V_i \setminus X|\geq |V_i|- r>(s-1)+(r+1)(r-1)=(s-1)+|X|(r-1)$.
Hence for every $i \in \{1,2,3\}$, there must be a vertex $x \in X$ that has at least~$r$ neighbours in~$V_i$.
Applying this for each~$i$ in turn, we find that at least two of the graphs in $\{G[V_1 \cup V_2],G[V_1 \cup V_3],G[V_2 \cup V_3]\}$ contain a vertex of degree at least~$r$; without loss of generality assume that this is the case for $G[V_1 \cup V_2]$ and $G[V_1 \cup V_3]$.
Let~$V_2'$ and~$V_3'$ denote the set of vertices in~$V_2$ and~$V_3$, respectively, that have more than $s-\nobreak 1$ non-neighbours in~$V_1$.
By Lemma~\ref{lem:star+indep-free-bip-structure}, $|V_2'|,|V_3'| \leq s-1$.

Suppose a vertex $x \in V_2 \setminus V_2'$ is adjacent to a vertex $y \in V_3 \setminus V_3'$.
By definition of~$V_2'$ and~$V_3'$, the vertices~$x$ and~$y$ each have at most $s-\nobreak 1$ non-neighbours in~$V_1$.
Since $|V_1| - 2(s-1) \geq \oct(G)+1$, it follows that $|N(x) \cap N(y) \cap V_1| \geq \oct(G)+1$ so $N(x) \cap N(y) \cap V_1 \not \subseteq T$.
We conclude that at least one of~$x$ or~$y$ must be in~$T$.
In other words, $T \cap ((V_2 \setminus V_2') \cup (V_3 \setminus V_3'))$ is a vertex cover of $G[(V_2 \setminus V_2') \cup (V_3 \setminus V_3')]$, of size at most~$\oct(G)$.
Therefore $(T \cap ((V_2 \setminus V_2') \cup (V_3 \setminus V_3'))) \cup V_2' \cup V_3'$ is a vertex cover of $G[V_2 \cup V_3]$ of size at most $\oct(G)+2(s-1)$.
By Lemma~\ref{lem:vc-K1r+sP1}, there is an independent vertex cover~$T'$ of $G[V_2 \cup V_3]$ of size at most $(\oct(G)+2(s-1))r+rs=\oct(G)r+3rs-2r$.
Note that by definition of vertex cover, $(V_2 \cup V_3) \setminus T'$ is an independent set, and so $G-T'$ is bipartite.
Therefore~$T'$ is an independent odd cycle transversal for~$G$ of size at most~$\oct(G)r+3rs-2r$.
This completes the proof.\qedllncs
\end{proof}

The following result follows immediately from combining Lemmas~\ref{lem:oct-K13} and~\ref{lem:oct-K_1r-to-K13+sP_1}.

\begin{corollary}\label{cor:oct-K13+sP1}
For $s \geq 1$, $\ioct(G) \leq 3\oct(G)+9s+3$ for every $(K_{1,3}+\nobreak sP_1)$-free $3$-colourable graph~$G$.
\end{corollary}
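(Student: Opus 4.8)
The plan is to obtain the corollary directly from Lemma~\ref{lem:oct-K_1r-to-K13+sP_1} by specialising to $r=3$. The first thing to check is the hypothesis of that lemma: it requires a function $f:\mathbb{Z}_{\geq 0} \to \mathbb{Z}_{\geq 0}$ with $\ioct(G) \leq f(\oct(G))$ for every $K_{1,3}$-free $3$-colourable graph~$G$. But this is precisely what Lemma~\ref{lem:oct-K13} provides, with $f(n)=3n$. So I would simply record that the hypothesis is met with $r=3$ and $f(n)=3n$.

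Next I would apply Lemma~\ref{lem:oct-K_1r-to-K13+sP_1} with these values. It yields, for every $(K_{1,3}+\nobreak sP_1)$-free $3$-colourable graph~$G$,
\[
\ioct(G) \leq \max\bigl\{\,\oct(G)\cdot 3 + 3^2 + 3\cdot 3\cdot s - 2\cdot 3,\; f(\oct(G))\,\bigr\} = \max\bigl\{\,3\oct(G) + 9s + 3,\; 3\oct(G)\,\bigr\}.
\]
Since $3\oct(G)+9s+3 \geq 3\oct(G)$ for all $s \geq 1$, the maximum equals $3\oct(G)+9s+3$, which is exactly the claimed bound.

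Since the statement is just an instantiation of an already-proved lemma, there is no genuine obstacle here; the only thing to be careful about is evaluating the expression $\oct(G)r + r^2 + 3rs - 2r$ correctly at $r=3$, namely confirming $9 + 9s - 6 = 9s + 3$. I would present the argument exactly as above, in two short sentences, since the excerpt already flags that the result "follows immediately."
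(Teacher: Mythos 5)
Your proposal is correct and matches the paper's own derivation: the corollary is stated there as following immediately from combining Lemma~\ref{lem:oct-K13} (giving $f(n)=3n$) with Lemma~\ref{lem:oct-K_1r-to-K13+sP_1} at $r=3$, and your arithmetic $3\oct(G)+9+9s-6=3\oct(G)+9s+3$ together with taking the maximum against $3\oct(G)$ is exactly right.
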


\begin{lemma}\label{lem:P1+P4-2P2-oct}
The class of $(P_1+\nobreak P_4,2P_2)$-free $3$-colourable graphs is $\ioct$-unbounded.
\end{lemma}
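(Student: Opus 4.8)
The plan is to exhibit an explicit family of $(P_1+P_4, 2P_2)$-free $3$-colourable graphs $\{G_s\}_{s\ge 2}$ with $\oct(G_s)$ bounded by a constant while $\ioct(G_s)$ grows without bound. This is the same strategy used in Lemma~\ref{lem:Cr-fvs-and-oct} and in the proof of Theorem~\ref{t-fvs}\ref{t-fvs:i}, where the key trick is to take a ``forest part'' that is cheap to kill with a non-independent transversal (because a few high-degree vertices dominate many cycles) but expensive to kill with an independent one (because those high-degree vertices cannot be used together with their neighbours).

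First I would choose the building block so that the ambient graph is $(P_1+P_4,2P_2)$-free. The cleanest way to guarantee $2P_2$-freeness is to make the graph's complement disconnected in a controlled way, or equivalently to make the graph ``almost complete multipartite'' apart from a bounded-size part; recall from the proof of Theorem~\ref{t-fvs}\ref{t-fvs:i} that $\overline{T_s}$ being a disjoint union of a $P_4$ and a clique makes $T_s$ simultaneously $(P_1+P_3,2P_1+P_2,2P_2,P_4)$-free. So a natural candidate is a graph $G_s$ consisting of a path $P_4$ on vertices $x_1x_2x_3x_4$ together with a large independent set $I$ of size $s$ that is complete to $\{x_1,x_2,x_3,x_4\}$ --- that is, the graph $T_s$ from Section~\ref{sec:fvs} itself, which is $(P_1+P_3,2P_2)$-free and in particular $(P_1+P_4,2P_2)$-free, and is $3$-colourable (colour $I$ with one colour and the path with two colours, since the path is bipartite). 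Then I would compute $\oct(T_s)$: the set $\{x_2,x_3\}$ (or $\{x_1,x_3\}$) already makes $T_s-\{x_2,x_3\}$ bipartite, since what remains is $I$ complete to $\{x_1,x_4\}$, a complete bipartite graph; one then checks no single vertex suffices (there are odd cycles avoiding any one vertex), so $\oct(T_s)=2$, a constant. The key inequality is the lower bound on $\ioct(T_s)$: an independent transversal $S$ can contain at most two of $x_1,x_2,x_3,x_4$, and it cannot contain both a path vertex and a vertex of $I$ adjacent to it; if $S$ contains fewer than $s-1$ vertices of $I$, then at least two vertices of $I$ survive together with enough of the path to form an odd cycle (two vertices of $I$ with a common neighbour on the path and a suitable third path vertex give a $C_3$, or with all of $x_1,x_2,x_3$ surviving one finds a $C_5$). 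Pinning down the exact combinatorics of which small configurations force a surviving odd cycle is the part that needs care, but it is entirely analogous to the argument for $T_s$ in the $\ifvs$ setting, where $\ifvs(T_s)\ge s-1$; here the same reasoning gives $\ioct(T_s)\ge s-1$. Since $s$ is unbounded, the class is $\ioct$-unbounded.

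The main obstacle I anticipate is not the boundedness of $\oct$ (that is immediate from the complete-bipartite structure of $T_s$ minus two path vertices) but verifying the $\ioct$ lower bound cleanly: I must rule out \emph{every} independent odd cycle transversal of size less than $s-1$, which means arguing that for any independent $S$ with $|S\cap I|\le s-2$ the graph $T_s-S$ still contains an odd cycle. The safe way to do this is a short case analysis on $|S\cap\{x_1,x_2,x_3,x_4\}|\in\{0,1,2\}$ combined with independence (e.g.\ $S$ cannot contain $x_2$ and $x_3$ simultaneously since they are adjacent, and cannot contain any $x_i$ together with a vertex of $I$), each case exhibiting a concrete surviving $C_3$ or $C_5$ using the two surviving vertices of $I$. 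I would also double-check the forbidden-subgraph claim directly on $T_s$: any induced $P_1+P_4$ or $2P_2$ would need at least two vertices of $I$ in different components of the induced subgraph, but any two vertices of $I$ share the common neighbour $x_1$ (indeed all of $x_1,\dots,x_4$), so they cannot lie in different components --- this kills both forbidden patterns at once. Putting these pieces together gives the lemma.
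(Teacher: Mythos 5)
There is a genuine gap, and it is exactly at the point you flagged as needing care: the $\ioct$ lower bound for your chosen family fails. In $T_s$ (the $P_4$ on $x_1x_2x_3x_4$ plus an independent set $I$ of size $s$ complete to the path), the set $\{x_1,x_3\}$ is an \emph{independent} odd cycle transversal of size~$2$: since $I$ is independent and complete to the path, every odd cycle of $T_s$ must use an edge of the path (a cycle with no path edge alternates between $I$ and path vertices and is therefore even), and $\{x_1,x_3\}$ is a vertex cover of the $P_4$, so $T_s-\{x_1,x_3\}$ is the complete bipartite graph with parts $\{x_2,x_4\}$ and $I$, which is bipartite. Hence $\ioct(T_s)=\oct(T_s)=2$ for every $s$, and the family shows nothing. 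The analogy with the $\ifvs$ argument breaks precisely because there the surviving $4$-cycles (two vertices of $I$ together with two non-adjacent path vertices) forced $S$ to take $s-1$ vertices of $I$; for odd cycle transversal these $C_4$'s are harmless, so an independent set of path vertices hitting all path edges suffices. Your proposed case analysis cannot be repaired on this graph: the configurations you hope force a surviving $C_3$ or $C_5$ (e.g.\ two surviving vertices of $I$ with a common path neighbour) are in fact bipartite, since two vertices of $I$ are never adjacent.

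To fix this you need a construction in which no small independent set of ``hub'' vertices kills all triangles. The paper achieves this with a graph $Q_s$ built from three independent sets $A,B,C$ of size $s$ and three mutually adjacent special vertices $a\in A$, $b\in B$, $c\in C$, where $a$ is complete to $B\cup C$, $b$ to $A\cup C$ and $c$ to $A\cup B$. Then $\{a,b\}$ is an odd cycle transversal, so $\oct(Q_s)=2$, but any independent transversal misses at least two of $a,b,c$, say $b$ and $c$, and must then contain every vertex of $A$ (each $x\in A$ forms a triangle with $b$ and $c$), giving $\ioct(Q_s)\geq s$; one then checks $(P_1+\nobreak P_4)$- and $2P_2$-freeness by inspecting $Q_s-N[x]$ and $Q_s-N[\{a,y\}]$. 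Your verification of $(P_1+\nobreak P_4,2P_2)$-freeness and of $\oct(T_s)=2$ is fine, but without a valid lower bound on $\ioct$ the lemma is not proved.
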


\begin{proof}
Let $s \geq 2$.
We construct the graph~$Q_s$ as follows (see also \figurename~\ref{fig:Qr-oct}).
First, let $A$, $B$ and~$C$ be disjoint independent sets of~$s$ vertices.
Choose vertices $a\in A$, $b\in B$ and $c\in C$.
Add edges so that~$a$ is complete to $B \cup C$, $b$ is complete to $A \cup C$ and~$c$ is complete to $A \cup B$.
Let~$Q_s$ be the resulting graph and note that it is $3$-colourable with colour classes $A$, $B$ and~$C$.

\begin{figure}
\begin{center}
\begin{tikzpicture}[scale=0.7,rotate=-90]
\coordinate (m1) at (0:1.4);
\coordinate (m2) at (120:1.4);
\coordinate (m3) at (240:1.4);

\coordinate (x1) at (0:2);
\coordinate (x2) at (120:2);
\coordinate (x3) at (240:2);

\coordinate (y1) at (15+180+0:4);
\coordinate (y2) at (15+180+120:4);
\coordinate (y3) at (15+180+240:4);

\coordinate (z1) at (-15+180+0:4);
\coordinate (z2) at (-15+180+120:4);
\coordinate (z3) at (-15+180+240:4);

\coordinate (w1) at (180+0:4);
\coordinate (w2) at (180+120:4);
\coordinate (w3) at (180+240:4);

\coordinate (l1) at (180+0:4.8);
\coordinate (l2) at (180+120:5.3);
\coordinate (l3) at (180+240:5.3);

\coordinate (e1) at ($(y1)!0.5!(z1)$);
\coordinate (e2) at ($(y2)!0.5!(z2)$);
\coordinate (e3) at ($(y3)!0.5!(z3)$);

\draw[fill=gray!20!white] (e1) ellipse (0.5 and 1.5);
\draw[fill=gray!20!white,rotate=120] (e2) ellipse (0.5 and 1.5);
\draw[fill=gray!20!white,rotate=240] (e3) ellipse (0.5 and 1.5);

\draw(l1) node {$A \setminus \{a\}$};
\draw(l2) node {$B \setminus \{b\}$};
\draw(l3) node {$C \setminus \{c\}$};

\draw(m1) node {$a$};
\draw(m2) node {$b$};
\draw(m3) node {$c$};

\draw (x1) -- (x2) -- (x3) -- (x1);

\draw (x1) -- (y3) -- (x2);
\draw (x1) -- (z3) -- (x2);
\draw (x1) -- (w3) -- (x2);

\draw (x1) -- (y2) -- (x3);
\draw (x1) -- (z2) -- (x3);
\draw (x1) -- (w2) -- (x3);

\draw (x2) -- (y1) -- (x3);
\draw (x2) -- (z1) -- (x3);
\draw (x2) -- (w1) -- (x3);

\draw[fill=black](x1) circle [radius=3pt];
\draw[fill=white](x2) circle [radius=3pt];
\draw[fill=white](x3) circle [radius=3pt];

\draw[fill=black](y1) circle [radius=3pt];
\draw[fill=white](y2) circle [radius=3pt];
\draw[fill=white](y3) circle [radius=3pt];

\draw[fill=black](z1) circle [radius=3pt];
\draw[fill=white](z2) circle [radius=3pt];
\draw[fill=white](z3) circle [radius=3pt];

\draw[fill=black](w1) circle [radius=3pt];
\draw[fill=white](w2) circle [radius=3pt];
\draw[fill=white](w3) circle [radius=3pt];
\end{tikzpicture}
\end{center}
\caption{The graph~$Q_4$.}
\label{fig:Qr-oct}
\end{figure}

Note that~$\{a,b\}$ is a minimum odd cycle transversal of~$Q_s$, so $\oct(Q_s)=2$.

Let~$S$ be a minimum independent odd cycle transversal.
Then~$S$ contains at most one vertex in $\{a,b,c\}$, say~$S$ contains neither~$b$ nor~$c$.
If a vertex $x \in A$ is not in~$S$, then $Q_s[\{x,b,c\}]$ is a~$C_3$ in $Q_s-S$, a contradiction.
Hence every vertex of~$A$ is in~$S$, and so $\ioct(Q_s) \geq s$.

It remains to show that~$Q_s$ is $(P_1+\nobreak P_4,2P_2)$-free.
Consider a vertex $x \in A$.
Then $Q_s-N[x]$ is an edgeless graph if $x=a$ and $Q_s-N[x]$ is the disjoint union of a star and an edgeless graph otherwise.
It follows that $Q_s-N[x]$ is $P_4$-free.
By symmetry, we conclude that~$Q_s$ is $(P_1+\nobreak P_4)$-free.
Now consider a vertex $y \in N(a) \cap B$.
Then $Q_s-N[\{a,y\}]$ is empty if $y=b$ and $Q_s-N[\{a,y\}]$ is an edgeless graph otherwise.
It follows that $Q_s-N[\{a,y\}]$ is $P_2$-free.
By symmetry, we conclude that~$Q_s$ is $2P_2$-free.
This completes the proof.\qedllncs
\end{proof}

\begin{lemma}\label{lem:oct-two-deg3}
Let~$H$ be a graph with more than one vertex of degree at least~$3$.
Then the class of $H$-free $3$-colourable graphs is $\ioct$-unbounded.
\end{lemma}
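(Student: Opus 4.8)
The plan is to treat three cases according to the structure of $H$, producing in each a family of $H$-free $3$-colourable graphs with $\oct$ bounded by a constant but $\ioct\to\infty$. First, if $H$ contains a cycle, then a shortest, hence chordless, cycle of $H$ is an induced $C_k$ with $k\ge 3$; every $C_k$-free graph is then $H$-free, so the class of $C_k$-free near-bipartite graphs, which is $3$-colourable and $\ioct$-unbounded by Lemma~\ref{lem:Cr-fvs-and-oct}, lies inside the class of $H$-free $3$-colourable graphs, which is therefore $\ioct$-unbounded. So from now on $H$ is a forest; it has at least two vertices of degree at least $3$, and since forests are triangle-free, either two such vertices are non-adjacent, or there are exactly two and they are adjacent.

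Suppose first that $H$ has two non-adjacent vertices of degree at least $3$. I would reuse the graphs $S_s^r$ of Lemma~\ref{lem:Cr-fvs-and-oct} for a fixed $r\ge 2$: their only vertices of degree at least $3$ are $u$ and $v$, and $u\sim v$, so any induced subgraph of $S_s^r$ with two vertices of degree at least $3$ contains both $u$ and $v$ with them adjacent. Hence $S_s^r$ is $H$-free, and since the $S_s^r$ are near-bipartite with $\oct(S_s^r)=2$ and $\ioct(S_s^r)\ge s+1$, the class of $H$-free $3$-colourable graphs is $\ioct$-unbounded.

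It remains to handle the case where $H$ is a forest whose only two vertices of degree at least $3$, say $x$ and $y$, are adjacent. Picking two further neighbours of $x$ and two of $y$ and using that $H$ is a forest, one checks that these six vertices induce $D_{2,2}$, so $D_{2,2}\subseteq_i H$; it therefore suffices to build an $\ioct$-unbounded family of $D_{2,2}$-free $3$-colourable graphs, and a new construction is needed since neither the $S_s^r$ nor the graphs $Q_s$ of Lemma~\ref{lem:P1+P4-2P2-oct} are $D_{2,2}$-free. I would take $G_s$ with vertices $u,v$; a set $W=\{w_1,\dots,w_{2s}\}$ with each $w_j$ adjacent to both $u$ and $v$; vertices $x_1,y_1,\dots,x_s,y_s$ with $w_{2i-1}\sim x_i\sim y_i\sim w_{2i}$ for $i=1,\dots,s$; and the edge $uv$. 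Then $G_s$ is $3$-colourable (give $u,v,w_j,x_i,y_i$ colours $1,2,3,1,2$), the graph $G_s-\{u,v\}$ is a disjoint union of paths $P_4$ so $\oct(G_s)\le 2$, and any independent odd cycle transversal $S$ omits $u$ or $v$: if it omits both then it must contain every $w_j$ to destroy the triangle $uvw_j$, and if it omits $u$ and contains $v$ then by independence it omits all of $W$, so it must contain one of $x_i,y_i$ for each $i$ to destroy the $5$-cycle $u w_{2i-1} x_i y_i w_{2i}$. Either way $|S|\ge s+1$, so $\ioct(G_s)\ge s+1$ while $\oct(G_s)\le 2$.

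The step I expect to be the main obstacle is proving that $G_s$ is $D_{2,2}$-free; this is exactly why a new construction is required here. The vertices of $G_s$ of degree at least $3$ are $u$, $v$ and the $w_j$'s, so the two degree-$3$ centres of an induced $D_{2,2}$ would have to be an adjacent pair among these, that is, one of $\{u,v\}$, $\{u,w_j\}$ or $\{v,w_j\}$. I would rule out each case by observing that one of the two vertices has at most one neighbour outside the closed neighbourhood of the other — indeed $N(u)\setminus N[v]=\emptyset$, and $w_j$ has only its unique path-neighbour outside $N[u]$ and outside $N[v]$ — whereas a centre of $D_{2,2}$ needs two private neighbours. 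Getting these neighbourhood computations exactly right, and checking the (routine) exhaustiveness of the cases, completes the argument.
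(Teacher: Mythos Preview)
Your proof is correct. Amusingly, your construction $G_s$ in the final case is isomorphic to the paper's $Z_s$ (take $a=u$, $b=v$, and let $w_{2i-1},x_i,y_i,w_{2i}$ be the $i$th copy of $P_4$), so you have in fact rediscovered the paper's construction.

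The difference is purely organisational. The paper uses this single family $Z_s$ for \emph{all} forests $H$ with two vertices of degree at least~$3$, rather than splitting into the adjacent/non-adjacent cases. Its $H$-freeness argument runs as follows: if $H\subseteq_i Z_s$ with $x,y$ of degree $\ge 3$ in $H$, then (since $H$ is a forest) each of $x,y$ has three pairwise non-adjacent neighbours in $Z_s$; the endpoints $w_j$ fail this because two of their three neighbours ($u$ and $v$) are adjacent, so $\{x,y\}=\{u,v\}$; now any further $H$-neighbour $z$ of $x=u$ is some $w_j$, and $w_j$ is also adjacent to $y=v$, giving a triangle in $H$, a contradiction. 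This single paragraph replaces your Case~2 (the use of $S_s^r$), your reduction to $D_{2,2}$, and your private-neighbour analysis; your route is a bit longer but equally valid, and your $D_{2,2}$-freeness check is exactly the right computation.
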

\begin{proof}
Let $s \geq 1$.
We construct the graph~$Z_s$ as follows (see also \figurename~\ref{fig:oct-deg3}).
Start with the disjoint union of~$s$ copies of~$P_4$ and label these copies $U^1,\ldots,U^s$.
Add an edge~$ab$ and make~$a$ and~$b$ adjacent to both endpoints of every~$U^i$.
Let~$Z_s$ be the resulting graph and note that~$Z_s$ is $3$-colourable (colour~$a$ and~$b$ with Colours~$1$ and~$2$, respectively, colour the endpoints of the~$U^i$s with Colour~$3$ and colour the remaining vertices of the~$U^i$s with Colours~$1$ and~$2$).

\begin{figure}
\begin{center}
\begin{tikzpicture}[xscale=0.5, yscale=0.5]
\draw (-3,2) -- (-9,-1) -- (-9,-3) -- (-6,-3) -- (-6,-1) -- (-3,2) (-9,-1) -- (3,2) -- (-6,-1) (-3,2) -- (-4,-1) -- (-4,-3) -- (-1,-3) -- (-1,-1) -- (-3,2) (-4,-1) -- (3,2) -- (-1,-1) (-3,2) -- (6,-1) -- (6,-3) -- (9,-3) -- (9,-1) -- (-3,2) (9,-1) -- (3,2) -- (6,-1) (-3,2) -- (1,-1) -- (1,-3) -- (4,-3) -- (4,-1) -- (-3,2) (4,-1) -- (3,2) -- (1,-1) (-3,2) -- (3,2);
\draw[fill=white] (-9,-1) circle [radius=4.2pt];
\draw[fill=white] (-6,-1) circle [radius=4.2pt];
\draw[fill=white] (-4,-1) circle [radius=4.2pt];
\draw[fill=white] (-1,-1) circle [radius=4.2pt];
\draw[fill=white] (6,-1) circle [radius=4.2pt];
\draw[fill=white] (9,-1) circle [radius=4.2pt];
\draw[fill=black] (-9,-3) circle [radius=4.2pt];
\draw[fill=white] (-6,-3) circle [radius=4.2pt];
\draw[fill=black] (-4,-3) circle [radius=4.2pt];
\draw[fill=white] (-1,-3) circle [radius=4.2pt];
\draw[fill=black] (6,-3) circle [radius=4.2pt];
\draw[fill=white] (9,-3) circle [radius=4.2pt];
\draw[fill=white] (-3,2) circle [radius=4.2pt];
\draw[fill=black] (3,2) circle [radius=4.2pt];
\draw[fill=white] (4,-3) circle [radius=4.2pt];
\draw[fill=black] (1,-3) circle [radius=4.2pt];
\draw[fill=white] (4,-1) circle [radius=4.2pt];
\draw[fill=white] (1,-1) circle [radius=4.2pt];
\node [above] at (-3,2.1) {$a$};
\node [above] at (3,2.1) {$b$};
\node [below] at (-7.5,-3.15) {$U^1$};
\node [below] at (-2.5,-3.15) {$U^2$};
\node [below] at (2.5,-3.15) {$U^3$};
\node [below] at (7.5,-3.15) {$U^4$};
\end{tikzpicture}
\end{center}
\caption{The graph~$Z_4$.}
\label{fig:oct-deg3}
\end{figure}

Note that $Z_s-\{a,b\}$ is bipartite, so~$\{a,b\}$ is a minimum odd cycle transversal and $\oct(Z_s)=2$.
However, any independent odd cycle transversal~$S$ contains at most one vertex of~$a$ and~$b$; say it does not contain~$a$.
For every $i \in \{1,\ldots,s\}$, the graph $Z_s[U^i \cup \{a\}]$ is a~$C_5$.
Therefore~$S$ must contain at least one vertex from each~$U^i$.
It follows that $\ioct(Z_s) \geq s$.

Let~$H$ be a graph with more than one vertex of degree at least~$3$.
By Lemma~\ref{lem:Cr-fvs-and-oct}, we may assume that~$H$ is a forest.
It remains to show that~$Z_s$ is $H$-free.
Suppose, for contradiction, that~$Z_s$ contains~$H$ as an induced subgraph and let~$x$ and~$y$ be two vertices that have degree at least~$3$ in~$H$.
Since~$H$ is a forest, $x$ and~$y$ must each have three pairwise non-adjacent neighbours in~$Z_s$.
The endpoints of each~$U^i$ have exactly three neighbours, but two of them ($a$ and~$b$) are adjacent.
Without loss of generality we may therefore assume that $x=a$ and $y=b$.
Since~$x$ has degree at least~$3$ in~$H$, the vertex~$x$ must have a neighbour $z \neq y$ in~$H$ and so~$z$ must be the endpoint of a~$U^i$.
Therefore~$x$, $y$ and~$z$ are pairwise adjacent, so $H[\{x,y,z\}]$ is a~$C_3$, contradicting the fact that~$H$ is a forest.
It follows that~$Z_s$ is $H$-free.
This completes the proof.\qedllncs
\end{proof}

\begin{lemma}\label{lem:oct-K15}
The class of $K_{1,5}$-free $3$-colourable graphs is $\ioct$-unbounded.
\end{lemma}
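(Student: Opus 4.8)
The plan is to exhibit, for every integer $m\ge 1$, a $K_{1,5}$-free $3$-colourable graph $G_m$ with $\oct(G_m)=2$ and $\ioct(G_m)\ge m$; since $\oct$ stays bounded by the constant~$2$, this proves the class is $\ioct$-unbounded, in the same spirit as the graphs $S_s^r$, $Q_s$ and $Z_s$ used in the preceding lemmas. The graph $G_m$ consists of a small non-bipartite ``core'' and a long bounded-degree ``forcing chain''. Concretely, I would take a triangle on vertices $a,b,c$, a path $z_1z_2\cdots z_{3m}$, the short chords $z_iz_{i+5}$ for all valid~$i$, and the six core--chain edges $z_1b,\,z_1c,\,z_2c,\,z_3a,\,z_4b,\,z_5c$. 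Reading the three colours as $0,1,2$ with $a,b,c$ receiving $0,1,2$, these six edges are chosen precisely so that $z_1,\dots,z_5$ are forced to the pattern $\gamma(z_i)\equiv i-1\pmod 3$; the $\pm 5$ chords are then forced by two requirements on the chain: the chord length must be \emph{odd} (so the chain is bipartite and $\oct$ stays small) and $\equiv 2\pmod 3$ (so the periodic colouring propagates).

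Three of the four verifications are routine. For $3$-colourability one just exhibits the colouring $\gamma(z_i)\equiv i-1\pmod 3$ on the chain together with $a,b,c\mapsto 0,1,2$ and checks all edges. For $K_{1,5}$-freeness one observes that every vertex has degree at most~$5$, that the only vertex of degree~$5$ is~$c$ (with $N(c)=\{a,b,z_1,z_2,z_5\}$), and that $G_m[N(c)]$ contains the edge $ab$, so $\alpha(N(c))\le 4$; since every other vertex has degree at most~$4$, $G_m$ has no induced~$K_{1,5}$. For the bound on $\oct$: the graph on $z_1,\dots,z_{3m}$ with the path edges and the chords $z_iz_{i+5}$ is bipartite (colour $z_i$ by the parity of~$i$; $5$ odd) and triangle-free, so all odd cycles of $G_m$ pass through the core; deleting $b$ and~$c$ leaves only the core--chain edge $z_3a$, so $G_m-\{b,c\}$ is this bipartite graph with a single pendant vertex attached, hence bipartite, giving $\oct(G_m)\le 2$. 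Moreover $G_m-v$ is non-bipartite for every single vertex~$v$ (it contains the triangle $abc$, or the triangle $bz_1c$, or one of the $5$-cycles $a z_3 z_2 z_1 b$, $a z_3 z_2 z_1 c$), so $\oct(G_m)=2$; the presence of these extra odd cycles through the core is exactly what is needed to keep $\oct$ above~$1$, since an odd cycle transversal of size~$1$ would automatically be independent.

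The crux is the lower bound on $\ioct$, for which I would prove that the $3$-colouring is rigid. The triangle $abc$ forces $\{a,b,c\}$ to receive all three colours; then $z_1$ (adjacent to $b$ and~$c$) is forced to the colour of~$a$, and the edges $z_2c,\,z_3a,\,z_4b,\,z_5c$ successively force $\gamma(z_2),\dots,\gamma(z_5)$ to $1,2,0,1$. For $i\ge 6$, $z_i$ is adjacent to $z_{i-1}$ and $z_{i-5}$, whose already-forced colours are $i-2$ and $i$ modulo~$3$ — distinct, since $5\not\equiv 1\pmod 3$ — so $z_i$ is forced to the remaining colour $i-1\pmod 3$. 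By induction every proper $3$-colouring of $G_m$ restricts to $\gamma(z_i)\equiv i-1\pmod 3$ on the chain, so each of the three colour classes contains at least~$m$ of the vertices $z_1,\dots,z_{3m}$; equivalently, every independent odd cycle transversal of $G_m$ (being the complement of two colour classes) has size at least~$m$. Hence $\ioct(G_m)\ge m$, and letting $m\to\infty$ completes the proof.

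The step I expect to be most delicate is this rigidity argument together with the bookkeeping that keeps every neighbourhood of independence number at most~$4$: one must pin down exactly which six core--chain edges make $z_1,\dots,z_5$ rigid (a small finite check), confirm that the chain is triangle-free and bipartite so that $\oct$ really is~$2$, and then run the clean induction on the $\pm 5$ chords. The conceptual point underlying the construction — and the reason such a family can exist despite $G_m$ being bipartite after deleting only two vertices — is that a bipartite graph ($G_m-\{b,c\}$) can nevertheless have a rigid, perfectly balanced $3$-colouring once the colours on a bounded boundary set (here, the core) are prescribed.
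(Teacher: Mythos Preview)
Your proposal is correct and follows the same overall strategy as the paper: build a $K_{1,5}$-free graph with $\oct=2$ whose $3$-colouring is unique up to permutation, so that every independent odd cycle transversal is forced to be one of the (large, balanced) colour classes. Your verification of $K_{1,5}$-freeness, of $\oct(G_m)=2$, and of the rigidity induction (using that $z_{i-1}$ and $z_{i-5}$ carry the two colours $i-2$ and $i$ modulo~$3$) all check out; note that one needs $m\ge 2$ so that $z_4,z_5$ and the first chord $z_1z_6$ exist, which is harmless for unboundedness.

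The only genuine difference from the paper is the specific gadget. The paper's graph~$Y_s$ uses a two-vertex core $\{x,y\}$ attached to four parallel $P_{3s}$'s with rung edges $a_ib_i$, $c_id_i$ and cross edges $a_ic_{i+1}$, $d_ib_{i+1}$; rigidity propagates across the four paths in lockstep, and the degree-$5$ vertices are $y,a_1,d_1$. Your construction replaces this by a triangle core and a single path with $\pm 5$ chords; rigidity propagates along one path, and the sole degree-$5$ vertex is~$c$. Your version is arguably leaner (one chain instead of four, and only one high-degree vertex to check), while the paper's four-track layout keeps all internal degrees at~$4$ without chords and makes the bipartition of $Y_s-\{x,y\}$ visually immediate. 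Either way, the proof content --- bounded $\oct$, bounded degree with no independent $5$-neighbourhood, and rigidity giving balanced colour classes --- is identical.
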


\begin{proof}
Let $s \geq 1$.
We construct the graph~$Y_s$ as follows (see also \figurename~\ref{fig:oct-K15}).
\begin{enumerate}
\item Start with the disjoint union of four copies of~$P_{3s}$ and label the vertices of these paths $a_1,\ldots,a_{3s}$, $b_1,\ldots,b_{3s}$, $c_1,\ldots,c_{3s}$ and $d_1,\ldots,d_{3s}$ in order, respectively.
\item For each $i \in \{1,\ldots,3s\}$ add the edges~$a_ib_i$ and~$c_id_i$.
\item For each $i \in \{1,\ldots,3s-1\}$ add the edges~$a_ic_{i+1}$ and~$d_ib_{i+1}$.
\item Finally, add an edge~$xy$ and make~$x$ adjacent to~$a_1$ and~$d_1$ and~$y$ adjacent to~$a_1$, $b_1$, $c_1$ and~$d_1$.
\end{enumerate}
Let~$Y_s$ be the resulting graph.

\begin{figure}
\begin{center}
\begin{tikzpicture}
\foreach \letter/\j in {a/1.7,b/0.7,c/-0.7,d/-1.7}
\foreach \i in {1,2,...,6}{
\coordinate (\letter\i) at (\i*1.2,\j*1.2);
}

\coordinate (x) at (-2,0);
\coordinate (y) at (-1,0);
\draw (x) -- (y);

\foreach \i in {1,2,...,6} {\draw (a\i)--(b\i) (c\i) -- (d\i);}
\foreach \i[evaluate=\i as \evali using int(\i+1)] in {1,2,...,5} {\draw (a\evali) -- (a\i)--(c\evali)--(c\i) (d\evali) -- (d\i) -- (b\evali) -- (b\i);}

\foreach \from\to in {x/a1,x/d1,y/a1,y/b1,y/c1,y/d1} \draw (\from) -- (\to);

\tikzstyle{w_ci_vertex}=[circle,fill=white!100,text=black,minimum size=0.3cm,draw]
\tikzstyle{b_ci_vertex}=[circle,fill=black!50,text=black,minimum size=0.3cm,draw]
\tikzstyle{w_sq_vertex}=[regular polygon,regular polygon sides=4,fill=white!100,text=black,scale=0.9,draw]
\tikzstyle{b_sq_vertex}=[regular polygon,regular polygon sides=4,fill=black!50,text=black,scale=0.9,draw]
\tikzstyle{w_tr_vertex}=[regular polygon,regular polygon sides=3,fill=white!100,text=black,scale=0.6,draw]
\tikzstyle{b_tr_vertex}=[regular polygon,regular polygon sides=3,fill=black!50,text=black,scale=0.6,draw]

\node[w_sq_vertex,label=left:{$x$}] at (x) {};
\node[w_tr_vertex,label={[label distance=-3pt]above left:$y$}] at (y) {};

\foreach \letter/\i/\pos/\type in
{
a/1/0.4/b_ci_vertex,
a/2/0.4/w_sq_vertex,
a/3/0.4/b_tr_vertex,
a/4/0.4/w_ci_vertex,
a/5/0.4/b_sq_vertex,
a/6/0.4/w_tr_vertex,
b/1/-0.45/w_sq_vertex,
b/2/-0.45/b_tr_vertex,
b/3/-0.45/w_ci_vertex,
b/4/-0.45/b_sq_vertex,
b/5/-0.45/w_tr_vertex,
b/6/-0.45/b_ci_vertex,
c/1/0.4/b_sq_vertex,
c/2/0.4/w_tr_vertex,
c/3/0.4/b_ci_vertex,
c/4/0.4/w_sq_vertex,
c/5/0.4/b_tr_vertex,
c/6/0.4/w_ci_vertex,
d/1/-0.45/w_ci_vertex,
d/2/-0.45/b_sq_vertex,
d/3/-0.45/w_tr_vertex,
d/4/-0.45/b_ci_vertex,
d/5/-0.45/w_sq_vertex,
d/6/-0.45/b_tr_vertex} 
{
\node[\type] at (\letter\i) {};
\node at ($(\letter\i)+(0,\pos)$) {$\letter_\i$};
}
\end{tikzpicture}
\hspace*{1.5cm}
\end{center}
\caption{The graph~$Y_2$.
Different shapes show the unique $3$-colouring of~$Y_2$.
Different colours show the $2$-colouring of $Y_2 - \{ x, y \}$.}
\label{fig:oct-K15}
\end{figure}
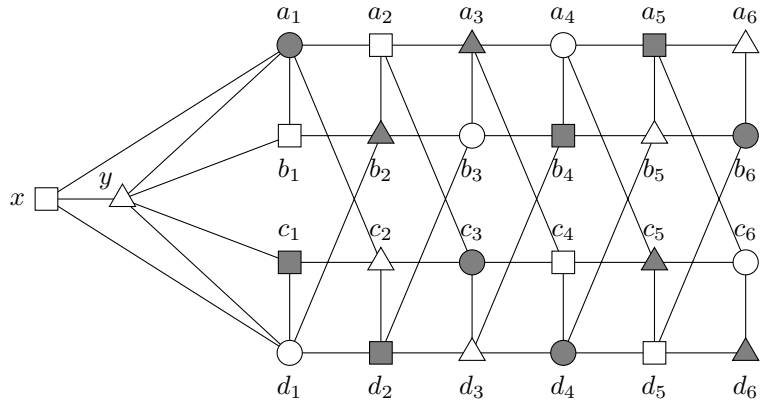

First note that~$Y_s$ is $K_{1,5}$-free.
The vertices~$y$, $a_1$ and~$d_1$ all have degree~$5$, but their neighbourhood is not independent, so they cannot be the central vertex of an induced~$K_{1,5}$.
All the other vertices have degree at most~$4$, so they cannot be the central vertex of an induced~$K_{1,5}$ either.
Therefore no vertex in~$Y_s$ is the central vertex of an induced~$K_{1,5}$, so~$Y_s$ is $K_{1,5}$-free.

The graph $Y_s-\{x,y\}$ is bipartite with bipartition classes:
\begin{enumerate}
\item $\{a_i, c_i \;|\; 1 \leq i \leq 3s, i \equiv 1 \bmod 2\} \cup \{b_i, d_i \;|\; 1 \leq i \leq 3s, i \equiv 0 \bmod 2\}$ and
\item $\{a_i, c_i \;|\; 1 \leq i \leq 3s, i \equiv 0 \bmod 2\} \cup \{b_i, d_i \;|\; 1 \leq i \leq 3s, i \equiv 1 \bmod 2\}$.
\end{enumerate}
It follows that $\oct(Y_s)=2$.

Furthermore, $Y_s$ is $3$-colourable with colour classes:
\begin{enumerate}
\item $\{x\} \cup \{a_i, d_i \;|\; 1 \leq i \leq 3s, i \equiv 2 \bmod 3\} \cup \{b_i, c_i \;|\; 1 \leq i \leq 3s, i \equiv 1 \bmod 3\}$,
\item $\{y\} \cup \{a_i, d_i \;|\; 1 \leq i \leq 3s, i \equiv 0 \bmod 3\} \cup \{b_i, c_i \;|\; 1 \leq i \leq 3s, i \equiv 2 \bmod 3\}$ and
\item $\{a_i, d_i \;|\; 1 \leq i \leq 3s, i \equiv 1 \bmod 3\} \cup \{b_i, c_i \;|\; 1 \leq i \leq 3s, i \equiv 0 \bmod 3\}$.
\end{enumerate}
In fact, we will show that this $3$-colouring is unique (up to permuting the colours).
To see this, suppose that $c:V(Y_s)\to \{1,2,3\}$ is a $3$-colouring of~$Y_s$.
Since~$x$ and~$y$ are adjacent we may assume without loss of generality that $c(x)=1$ and $c(y)=2$.
Since~$a_1$ and~$d_1$ are adjacent to both~$x$ and~$y$, it follows that $c(a_1)=c(d_1)=3$.
Since~$b_1$ is adjacent to~$y$ and~$a_1$, it follows that $c(b_1)=1$.
By symmetry $c(c_1)=1$.

We prove by induction on~$i$ that for every $i \in \{1,\ldots,3s\}$, $c(a_i)=c(d_i) \equiv i+2 \bmod 3$ and $c(b_i)=c(c_i)\equiv i \bmod 3$.
We have shown that this is true for $i=1$.
Suppose that the claim holds for $i-1$ for some $i \in \{2,\ldots,3s\}$.
Then $c(a_{i-1})=c(d_{i-1}) \equiv (i-1)+2 \bmod 3$ and $c(b_{i-1})=c(c_{i-1})\equiv i-1 \bmod 3$.
Since~$b_i$ is adjacent to~$b_{i-1}$ and~$d_{i-1}$, it follows that~$c(b_i)\equiv i \bmod 3$.
Since~$a_i$ is adjacent to~$b_i$ and~$a_{i-1}$, it follows that $c(a_i)\equiv i+2 \bmod 3$.
By symmetry $c(c_i)\equiv i \bmod 3$ and $c(d_i)\equiv i+2 \bmod 3$.
Therefore the claim holds for~$i$.
By induction, this completes the proof of the claim and therefore shows that the $3$-colouring of~$Y_s$ is indeed unique.

Furthermore, note that the colour classes in this colouring have sizes~$4s+\nobreak 1$, $4s+\nobreak 1$ and~$4s$, respectively.
A set~$S$ is an independent odd cycle transversal of a graph if and only if it is a colour class in some $3$-colouring of this graph.
It follows that $\ioct(Y_s)=4s$.
This completes the proof.\qedllncs
\end{proof}

Before we can prove our main theorem of this section, we need one more lemma, due to Olariu.

\begin{lemma}[\cite{Ol88}]\label{l-ol}
Every connected component of a $\overline{P_1+P_3}$-free graph is either $C_3$-free or complete multi-partite.
\end{lemma}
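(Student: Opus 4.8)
The plan is to recognise that $\overline{P_1+P_3}$ is the \emph{paw} (a triangle with a pendant edge), so the statement is Olariu's characterisation of paw-free graphs~\cite{Ol88}, and to reprove it directly. Since the paw is connected, a graph is $\overline{P_1+P_3}$-free precisely when each of its components is, so I would first reduce to the connected case and aim to show: if $G$ is connected, $\overline{P_1+P_3}$-free and contains a triangle, then $G$ is complete multi-partite. The single observation driving the argument is that if $T$ is a triangle and $v\notin T$ has \emph{exactly one} neighbour in $T$, then $G[\{v\}\cup T]$ is a paw; hence in a $\overline{P_1+P_3}$-free graph every vertex outside a triangle $T$ has $0$ or at least $2$ neighbours in $T$.

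The first real step is to prove that \emph{every} vertex of $G$ lies in a triangle. If some vertex $v$ did not, I would pick a triangle $T$ with $\dist(v,T)=k\geq 1$ minimum and a shortest path $v=w_0,\dots,w_k$ with $w_k\in T$; then $w_{k-1}$ has a neighbour in $T$, it cannot have two (else it would lie in a triangle at distance at most $k-1$ from $v$, contradicting minimality), so it has exactly one, and $G[\{w_{k-1}\}\cup T]$ is a paw — a contradiction.

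The second step is to show $G$ has no induced $P_1+P_2$, which is equivalent to $\overline{G}$ being $P_3$-free (a connected $P_3$-free graph is complete), i.e. to $G$ being complete multi-partite. Suppose $xz\in E(G)$ and $y\notin\{x,z\}$ is non-adjacent to both $x$ and $z$. By the previous step $y$ lies in a triangle $\{y,p,q\}$, and by the driving observation (together with $x\not\sim y$, $z\not\sim y$) each of $x$ and $z$ is adjacent either to both or to neither of $p$ and $q$. A short case check then produces a paw in every case: if $x$ is adjacent to $p,q$ then $\{x,p,q\}$ is a triangle, and either $z$ is adjacent to $p,q$ as well — so $\{x,z,p,q\}$ is a $K_4$ in which $y$ has exactly one neighbour in the triangle $\{x,z,p\}$ — or $z$ is adjacent to neither of $p,q$ and so has exactly one neighbour in $\{x,p,q\}$; symmetrically if $x$ is adjacent to neither of $p,q$ while $z$ is adjacent to both, then $x$ has exactly one neighbour in the triangle $\{z,p,q\}$; and if neither $x$ nor $z$ has any neighbour in $\{y,p,q\}$, I would invoke connectivity to take a shortest path from $\{x,z\}$ to $\{y,p,q\}$ and note that its penultimate vertex has exactly one neighbour in the triangle $\{y,p,q\}$. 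Each outcome contradicts $\overline{P_1+P_3}$-freeness, and assembling the two steps (together with the trivial triangle-free alternative and the reduction to components) gives the lemma.

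I expect the main obstacle to be organising this last case analysis cleanly — in particular noticing that the subcase where $x$ and $z$ are \emph{entirely} non-adjacent to $\{y,p,q\}$ cannot be closed by a purely local argument and genuinely requires the connectivity hypothesis through a shortest set-to-set path. Everything else becomes routine once one has first established that every vertex of $G$ lies in a triangle, which is the conceptual key that unlocks the proof.
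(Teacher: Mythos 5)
The paper never proves this statement itself: it is imported verbatim from Olariu~\cite{Ol88}, so your attempt is a reproof and has to stand on its own. Your identification of $\overline{P_1+P_3}$ as the paw, the reduction to a connected component containing a triangle, the key observation about a vertex with exactly one neighbour in a triangle, Step~1 (every vertex lies in a triangle, via a triangle at minimum distance), and the first three sub-cases of Step~2 are all correct.

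There is, however, a genuine flaw in the last sub-case, the one you yourself single out: when neither $x$ nor $z$ has any neighbour in $\{y,p,q\}$, you take a shortest path from $\{x,z\}$ to $\{y,p,q\}$ and assert that its penultimate vertex $w$ has \emph{exactly one} neighbour in the triangle $\{y,p,q\}$. Minimality of the path length does not give this: a path whose penultimate vertex is adjacent to two or all three of $y,p,q$ is exactly as short, so nothing you have said excludes $w$ having two or three neighbours there, and in that event the triangle $\{y,p,q\}$ together with $w$ yields no paw and your contradiction evaporates. The case can be closed, but it needs one more move: if $w$ has at least two neighbours $t_1,t_2\in\{y,p,q\}$, then $\{w,t_1,t_2\}$ is a triangle, and the vertex $w'$ preceding $w$ on the path exists (the path has length at least~$2$, since $x$ and $z$ have no neighbours in $\{y,p,q\}$) and, by shortestness, has no neighbour in $\{y,p,q\}$; hence $w'$ is adjacent to $w$ and to neither of $t_1,t_2$, so it has exactly one neighbour in the triangle $\{w,t_1,t_2\}$ and produces the paw. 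With the penultimate-vertex dichotomy handled this way (exactly one neighbour: your paw; at least two: the paw on $\{w',w,t_1,t_2\}$), the argument becomes a complete and correct elementary proof of Olariu's lemma; as written, that single step would fail.
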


We are now ready to prove the main result of this section, which immediately implies Theorems~\ref{thm:indep-oct} and~\ref{t-octi}.
If an upper bound given in this theorem is tight, that is, if there exists an $H$-free $3$-colourable graph~$G$ for which equality holds, we again indicate this by a~$*$ in the corresponding row (whereas the other upper bounds are not known to be tight).

\newpage
\begin{theorem}\label{t-oct}
Let~$H$ be a graph.
Then the following two statements hold:
\begin{enumerate}[(i)]
\renewcommand{\theenumi}{(\roman{enumi})}
\renewcommand\labelenumi{(\roman{enumi})}
\item\label{t-oct:i} the class of $H$-free $3$-colourable graphs is $\ioct$-bounded 
\begin{itemize}
\item if~$H$ is an induced subgraph of~$P_4$ or $K_{1,3}+\nobreak sP_1$ for some $s\geq 0$ and
\item only if~$H$ is an induced subgraph of~$K_{1,4}^+$ or $K_{1,4}+\nobreak sP_1$ for some $s\geq 0$.
\end{itemize}
\item\label{t-oct:ii} For $H\notin \{K_{1,3}, K_{1,3}^+, 2P_1+\nobreak P_3\}$, the class of $H$-free $3$-colourable graphs is $\ioct$-identical if and only if~$H$ is a (not necessarily induced) subgraph of~$P_4$ that is not isomorphic to~$2P_2$.\\[-20pt]
\end{enumerate}
In particular, the following statements hold for every $H$-free bipartite graph~$G$: 
\begin{enumerate}[(1)*]
\renewcommand{\theenumi}{(\arabic{enumi})}
\renewcommand\labelenumi{(\arabic{enumi})\phantom{*}}
\refstepcounter{enumi}
\item[\textcolor{darkgray}{\sffamily\bfseries\upshape\mathversion{bold}{(1)*}}]\label{t-oct:1} $\ioct(G)=\oct(G)$ if $H\subseteq P_4$ but $H\neq 2P_2$
\item\label{t-oct:2} $\ioct(G)\leq\oct(G)+s-3$ if $H=sP_1$ for $s\geq 5$
\item\label{t-oct:3} $\ioct(G)\leq\oct(G)+3s-1$ if $H=sP_1+\nobreak P_2$ for $s\geq 3$
\item\label{t-oct:4} $\ioct(G)\leq 2\oct(G)+6s$ if $H=sP_1+\nobreak P_3$ for $s\geq 2$
\item\label{t-oct:5} $\ioct(G)\leq 3\oct(G)$ if $H=K_{1,3}$
\item\label{t-oct:6} $\ioct(G)\leq 3\oct(G)+9s+3$ if $H=K_{1,3}+\nobreak sP_1$ for $s\geq 1$.
\end{enumerate}
\end{theorem}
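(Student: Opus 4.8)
The plan is to first clear the six numbered bounds, each of which is essentially immediate from results already in hand. Statement~(1) is exactly the assertion of part~\ref{t-oct:ii} that the class of $H$-free $3$-colourable graphs is $\ioct$-identical when $H\subseteq P_4$ and $H\not\cong 2P_2$, so it follows from the ``$\Leftarrow$'' direction of~\ref{t-oct:ii}. Statements~(5) and~(6) are Lemma~\ref{lem:oct-K13} and Corollary~\ref{cor:oct-K13+sP1} verbatim. Statements~(3) and~(4) are the cases $r=1$ and $r=2$ of Lemma~\ref{lem:oct-K_1r-to-K13+sP_1}: a $P_2$-free $3$-colourable graph is edgeless, so one may take $f\equiv 0$, giving $\ioct(G)\leq \oct(G)+3s-1$ for $(sP_1+P_2)$-free graphs; a $P_3$-free $3$-colourable graph is a disjoint union of cliques on at most three vertices, so one may take $f(n)=n$, giving $\ioct(G)\leq 2\oct(G)+6s$ for $(sP_1+P_3)$-free graphs. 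Statement~(2) holds because an $sP_1$-free graph has independence number at most $s-1$, so its smallest colour class is an independent odd cycle transversal of size at most $s-1\leq\oct(G)+(s-3)$ once $\oct(G)\geq 2$, while $\ioct(G)=\oct(G)$ when $\oct(G)\leq 1$. The real content is parts~\ref{t-oct:i} and~\ref{t-oct:ii}.

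For~\ref{t-oct:i}, the ``if'' direction is immediate: if $H\subseteq_i P_4$ then $H$-free graphs are $P_4$-free and Lemma~\ref{lem:oct-P4} applies; if $H\subseteq_i K_{1,3}+sP_1$ then $H$-free graphs are $(K_{1,3}+sP_1)$-free and Corollary~\ref{cor:oct-K13+sP1} applies. For the ``only if'' direction, suppose the class of $H$-free $3$-colourable graphs is $\ioct$-bounded. Lemma~\ref{lem:Cr-fvs-and-oct} forces $H$ to be a forest, Lemma~\ref{lem:oct-two-deg3} forces $H$ to have at most one vertex of degree at least~$3$, Lemma~\ref{lem:oct-K15} forces $H$ to be $K_{1,5}$-free (equivalently, of maximum degree at most~$4$), and Lemma~\ref{lem:P1+P4-2P2-oct} forces $H$ to be $(P_1+P_4,2P_2)$-free. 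It then remains to verify, by a short combinatorial argument, that a $(P_1+P_4,2P_2)$-free forest with at most one vertex of degree at least~$3$ and maximum degree at most~$4$ is an induced subgraph of $K_{1,4}^+$ or of $K_{1,4}+sP_1$: being $2P_2$-free it consists of one tree of diameter at most~$3$ together with isolated vertices; being $(P_1+P_4)$-free it has no isolated vertices unless that tree is a star; and the degree and diameter constraints then leave only the stars $K_{1,r}$ ($r\leq4$) with isolated vertices added, or the double stars $K_{1,r}^+$ ($r\leq4$).

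For the ``$\Leftarrow$'' direction of~\ref{t-oct:ii}, let $H\subseteq P_4$ with $H\not\cong 2P_2$. If $H\subseteq_i P_4$ then Lemma~\ref{lem:oct-P4} applies, so we are left with $H\in\{3P_1,4P_1,P_3+P_1,P_2+2P_1\}$. For $H=3P_1$ the graph has independence number at most~$2$, so $\ioct(G)\leq 2$ and a short case check on $\oct(G)$ finishes it. For $H=4P_1$ the graph has independence number at most~$3$, so $\ioct(G)\leq 3$; if $\ioct(G)>\oct(G)$ then $\oct(G)=2$ and can be witnessed only by edges (an independent witness would give $\ioct(G)\leq 2$), so for such an edge $uv$ the bipartite graph $G-\{u,v\}$ has both sides of size at most~$3$, whence $|V(G)|\leq 8$, while any $3$-colouring of a graph with $\ioct(G)\geq 3$ has all classes of size at least~$3$, so $|V(G)|\geq 9$, a contradiction. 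For $H=P_3+P_1$ we pass to the complement: $\overline G$ is $\overline{P_1+P_3}$-free, so by Lemma~\ref{l-ol} each component of $\overline G$ is $C_3$-free or complete multi-partite, which translates into $G$ being a join of at most three graphs, each a complete graph or of independence number at most~$2$; for such $G$ one checks $\ioct(G)=\oct(G)$ by reducing to the clique and $3P_1$-free cases. For $H=P_2+2P_1$ one argues directly, using that for every edge $uv$ the set of vertices anti-complete to $\{u,v\}$ is a clique of size at most~$3$, which restricts the structure of $G$ enough to conclude $\ioct(G)=\oct(G)$.

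Finally, for the ``$\Rightarrow$'' direction of~\ref{t-oct:ii}: assume $H$ is not a subgraph of $P_4$, or $H\cong 2P_2$, and $H\notin\{K_{1,3},K_{1,3}^+,2P_1+P_3\}$; we must produce an $H$-free $3$-colourable $G$ with $\ioct(G)>\oct(G)$. If $H$ is an induced subgraph of neither $K_{1,4}^+$ nor $K_{1,4}+sP_1$ then, by the ``only if'' part of~\ref{t-oct:i}, the class is already $\ioct$-unbounded; this covers every $H$ with an induced cycle, two vertices of degree at least~$3$, a vertex of degree at least~$5$, or an induced $P_1+P_4$ or $2P_2$ (in particular $H=2P_2$ and $H=P_1+P_4$). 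For the remaining $H$, running through the structural classification from~\ref{t-oct:i} and discarding subgraphs of $P_4$ and the three excluded graphs shows that $H$ contains one of $K_{1,4}$, $K_{1,3}+P_1$, $P_2+3P_1$, $5P_1$ as an induced subgraph, so it suffices to construct, for each of these four graphs $F$, an $F$-free $3$-colourable graph with $\ioct>\oct$. This is the main obstacle. Unlike the unbounded cases, these examples cannot be obtained by attaching many disjoint odd cycles to two high-degree ``forbidden'' apex vertices (as with $S_s^r$, $Z_s$, $Q_s$, $Y_s$): for $F=K_{1,4}$ and $F=K_{1,3}+P_1$ the example must still contain an induced $K_{1,3}$ --- a $K_{1,3}$-free example would resolve the open case $H=K_{1,3}$ of~\ref{t-oct:ii} --- so the apex vertices cannot have large independent neighbourhoods; and for $F=P_2+3P_1$ and $F=5P_1$ a counting argument like the one above for $4P_1$ shows that any example has independence number exactly~$4$ and at most ten vertices. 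So the examples essentially have to be found by hand, and then one must verify $3$-colourability, $F$-freeness, and $\oct=2<3\leq\ioct$ (the last typically by exhibiting the essentially unique $3$-colouring and checking that no independent pair is an odd cycle transversal). A secondary difficulty is the structural analysis of $(P_3+P_1)$-free and $(P_2+2P_1)$-free $3$-colourable graphs used in the ``$\Leftarrow$'' direction of~\ref{t-oct:ii}.
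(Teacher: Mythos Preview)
Your proposal is correct and follows the paper's approach essentially verbatim: the same auxiliary lemmas are invoked for part~\ref{t-oct:i} and for the numbered bounds (including the $r=1,2$ instantiations of Lemma~\ref{lem:oct-K_1r-to-K13+sP_1}), and part~\ref{t-oct:ii} proceeds via the same three structural cases ($4P_1$, $P_1+P_3$, $2P_1+P_2$) for ``$\Leftarrow$'' and the same reduction to four minimal obstructions plus hand-built examples for ``$\Rightarrow$''. Two minor remarks: the $3P_1$ case in ``$\Leftarrow$'' is redundant (every $3P_1$-free graph is $4P_1$-free), and in the $P_1+P_3$ case Olariu's lemma yields that each anticomponent of $G$ is $3P_1$-free or a \emph{disjoint union of cliques}, not a single complete graph---but neither point affects the argument.
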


\begin{proof}
We start by proving~{\sffamily\bfseries\upshape\mathversion{bold}{\ref{t-oct:i}}}.

\subparagraph*{\ref{t-oct:i}: ``$\Leftarrow$''.}
First suppose that~$H$ is an induced subgraph of~$P_4$ or $K_{1,3}+\nobreak sP_1$ for some $s\geq 0$.
Then the class of $H$-free $3$-colourable graphs is $\ioct$-bounded by Lemma~\ref{lem:oct-P4} or Corollary~\ref{cor:oct-K13+sP1}, respectively.

\subparagraph*{\ref{t-oct:i}: ``$\Rightarrow$''.}
Now suppose that the class of $H$-free $3$-colourable graphs is $\ioct$-bounded.
We will prove that~$H$ must be an induced subgraph of~$K_{1,4}^+$ or $K_{1,4}+\nobreak sP_1$ for some $s\geq 0$.
By Lemma~\ref{lem:Cr-fvs-and-oct}, $H$ must be a forest.
By Lemma~\ref{lem:oct-K15}, $H$ must be~$K_{1,5}$-free.
Since~$H$ is a $K_{1,5}$-free forest, it has maximum degree at most~$4$.
By Lemma~\ref{lem:P1+P4-2P2-oct}, $H$ must be $(P_1+\nobreak P_4,2P_2)$-free.

First suppose that~$H$ is $P_4$-free, so every component of~$H$ is a $P_4$-free tree.
Hence every component of~$H$ is a star.
In fact, as~$H$ has maximum degree at most~$4$, every component of~$H$ is an induced subgraph of~$K_{1,4}$.
As~$H$ is $2P_2$-free, at most one component of~$H$ contains an edge.
Therefore~$H$ is an induced subgraph of~$K_{1,4}+\nobreak sP_1$ for some $s\geq 0$ and we are done.

Now suppose that~$H$ contains an induced~$P_4$, say on vertices $x_1,x_2,x_3,x_4$ in that order and let $X=\{x_1,x_2,x_3,x_4\}$.
Since~$H$ is a forest, every vertex $v \in V(H) \setminus X$ has at most one neighbour in~$X$.
A vertex $v \in V(H) \setminus X$ cannot be adjacent to~$x_1$ or~$x_4$, since~$H$ is $2P_2$-free.
By Lemma~\ref{lem:oct-two-deg3}, the vertices~$x_2$ and~$x_3$ cannot both have neighbours outside~$X$; without loss of generality assume that~$x_3$ has no neighbours in $V(H)\setminus X$.
Since~$H$ is $(P_1+\nobreak P_4)$-free, every vertex $v \in V(H) \setminus X$ must have at least one neighbour in~$X$, so it must be adjacent to~$x_2$.
As~$H$ has maximum degree at most~$4$, it follows that~$H$ is an induced subgraph of~$K_{1,4}^+$.
This completes the proof of~\ref{t-oct:i}.

\medskip
\noindent
We now prove {\sffamily\bfseries\upshape\mathversion{bold}{\ref{t-oct:ii}}}.
Let~$H$ be a graph that is not isomorphic to a graph in $\{K_{1,3}, K_{1,3}^+, 2P_1+\nobreak P_3\}$.

\subparagraph*{\ref{t-oct:ii}: ``$\Leftarrow$''.}
First suppose that~$H$ is a subgraph of~$P_4$ that is not isomorphic to~$2P_2$.
If~$H$ is an induced subgraph of~$P_4$, then the claim follows from Lemma~\ref{lem:oct-P4}.
It is sufficient to prove that $\ioct(G)=\oct(G)$ if~$G$ is a $3$-colourable $H$-free graph in three remaining cases, namely when $H=4P_1$, $H=P_1+\nobreak P_3$ and $H=2P_1+\nobreak P_2$.

\subparagraph*{\bf Case~1. $H=4P_1$.\\}
Let~$G$ be a $4P_1$-free $3$-colourable graph and let $X_1$, $X_2$,~$X_3$ be the colour classes of some $3$-colouring of~$G$.
Note that $|X_1|,|X_2|,|X_3| \leq 3$ since~$G$ is $4P_1$-free and $X_1$, $X_2$,~$X_3$ are independent sets.
If $\oct(G) \leq 1$ then $\ioct(G)=\oct(G)$, so we need only consider the case when $\oct(G) \geq 2$.
Since~$G$ is $4P_1$-free, every independent odd cycle transversal has at most three vertices, so $\ioct(G) \leq 3$.

Suppose, for contradiction, that $\oct(G)\neq \ioct(G)$.
Since $\oct(G) \leq \ioct(G)$, it follows that $\oct(G)=2$ and $\ioct(G)=3$.
If $|X_i| < 3$ for some $i \in \{1,2,3\}$ then~$X_i$ is an independent odd cycle transversal on fewer than three vertices, a contradiction.
It follows that $|X_1|=|X_2|=|X_3|=3$ and so~$G$ has exactly nine vertices.
Let~$S$ be a minimum odd cycle transversal of~$G$, in which case $|S|=2$.
Then $G-S$ is a bipartite graph on seven vertices.
Therefore one of the parts of $G-S$ contains at least four vertices, and so $G-S$ (and therefore~$G$) contains an induced~$4P_1$.
This contradiction implies that $\ioct(G)=\oct(G)$.

\subparagraph*{\bf Case 2. $H=P_1+\nobreak P_3$.\\}
Let~$G$ be a $(P_1+\nobreak P_3)$-free $3$-colourable graph and let $X_1$, $X_2$,~$X_3$ be the colour classes of some $3$-colouring of~$G$.
By Lemma~\ref{l-ol}, every connected component of a $\overline{P_1+P_3}$-free graph is either $C_3$-free or complete multi-partite.
Let $D_1,\ldots,D_r$ be the connected components of~$\overline{G}$.
Then~$V(G)$ can be partitioned into sets $A_1,\ldots,A_r$, with $A_i=V(D_i)$ for $i\in \{1,\ldots,r\}$, such that 
\begin{enumerate}[(a)]
\renewcommand{\theenumi}{(\alph{enumi})}
\renewcommand\labelenumi{(\alph{enumi})}
\item\label{prop:a} for all $i \in \{1,\ldots,r\}$, the graph~$G[A_i]$ is either $3P_1$-free or a disjoint union of complete graphs, and
\item\label{prop:b} for all $i,j \in \{1,\ldots,r\}$ with $i\neq j$, the set~$A_i$ is complete to the set~$A_j$.
\end{enumerate}
As~$G$ is $3$-colourable and hence contains no~$K_4$, Property~\ref{prop:b} implies that $r\leq 3$.
First suppose that $r=3$.
Then, as~$G$ is $3$-colourable, each~$A_i$ must be an independent set.
Hence, $G$ is a complete $3$-partite graph with partition classes $A_1$, $A_2$,~$A_3$.
It follows that $\ioct(G)=\oct(G)=\min\{|A_1|,|A_2|,|A_3|\}$.

Now suppose that $r=2$.
As~$G$ is $3$-colourable and~$A_1$ is complete to~$A_2$, one of the sets~$A_1$ or~$A_2$, say~$A_1$, must be an independent set, and the other set, $A_2$, must induce a bipartite graph.
First assume that~$G[A_2]$ is a disjoint union of complete graphs.
As~$G[A_2]$ is bipartite, this means that every connected component of~$G[A_2]$ has at most two vertices (see \figurename~\ref{f-a1a2} for an example).
\begin{figure}
\begin{center}
\begin{tikzpicture}[xscale=1, yscale=1]
\draw (2.5,-1.5) -- (1.5,-1.5) (5.5,-1.5) -- (6.5,-1.5)
(5.5,1.5) -- (1.5,-1.5) (5.5,1.5) -- (2.5,-1.5) (5.5,1.5) -- (3.5,-1.5) (5.5,1.5) -- (4.5,-1.5) (5.5,1.5) -- (5.5,-1.5) (5.5,1.5) -- (6.5,-1.5)
(4.5,1.5) -- (1.5,-1.5) (4.5,1.5) -- (2.5,-1.5) (4.5,1.5) -- (3.5,-1.5) (4.5,1.5) -- (4.5,-1.5) (4.5,1.5) -- (5.5,-1.5) (4.5,1.5) -- (6.5,-1.5)
(3.5,1.5) -- (1.5,-1.5) (3.5,1.5) -- (2.5,-1.5) (3.5,1.5) -- (3.5,-1.5) (3.5,1.5) -- (4.5,-1.5) (3.5,1.5) -- (5.5,-1.5) (3.5,1.5) -- (6.5,-1.5)
(2.5,1.5) -- (1.5,-1.5) (2.5,1.5) -- (2.5,-1.5) (2.5,1.5) -- (3.5,-1.5) (2.5,1.5) -- (4.5,-1.5) (2.5,1.5) -- (5.5,-1.5) (2.5,1.5) -- (6.5,-1.5)
(1.5,2) -- (1,2) -- (1,1) -- (1.5,1)
(1.5,-2) -- (1,-2) -- (1,-1) -- (1.5,-1);
\draw[fill=white] circle [radius=3pt] (2.5,1.5) circle [radius=3pt] (3.5,1.5) circle [radius=3pt] (4.5,1.5) circle [radius=3pt] (5.5,1.5)
circle [radius=3pt] (1.5,-1.5) circle [radius=3pt] (2.5,-1.5) circle [radius=3pt] (3.5,-1.5) circle [radius=3pt] 
(4.5,-1.5) circle [radius=3pt] (5.5,-1.5) circle [radius=3pt] (6.5,-1.5) circle [radius=3pt] (6.5,-1.5);
\filldraw[white] (-0.2,-0.2) rectangle (0.2,0.2);
\node[left] at (1,1.5) {$A_1$};
\node[left] at (1,-1.5) {$A_2$};
\end{tikzpicture}
\end{center}
\caption{An example of a $(P_1+\nobreak P_3)$-free $3$-colourable graph~$G$ in the case when $r=2$ and~$G[A_2]$ is the disjoint union of one or more complete graphs on at most two vertices.}\label{f-a1a2}
\end{figure}
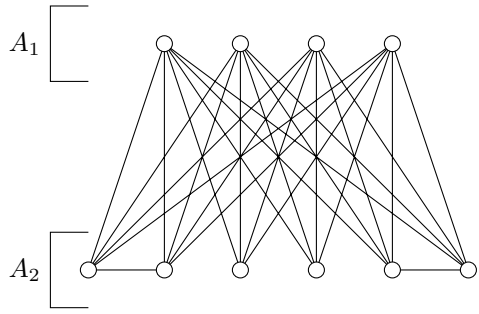
Pick a vertex of each edge in~$G[A_2]$ and let~$A_2'$ be the set of selected vertices.
Then $\ioct(G)=\oct(G)=\min\{|A_1|,|A_2'|\}$.
By Property~\ref{prop:a}, it remains to consider the case when~$G[A_2]$ is bipartite and $3P_1$-free.
Then $\ivc(G[A_2]) \leq 2$ and so $\ioct(G) \leq 2$ and therefore $\ioct(G)=\oct(G)$.

Finally, suppose that $r=1$.
If $G=G[A_1]$ is the disjoint union of complete graphs, then each complete graph must have at most three vertices (as~$G$ is $3$-colourable).
This implies that $\ioct(G)=\oct(G)$.
If $G=G[A_1]$ is $3P_1$-free, then $\ioct(G) \leq 2$ and therefore $\ioct(G)=\oct(G)$.
We conclude that $\ioct(G)=\oct(G)$.

\subparagraph*{Case 3. $H=2P_1+\nobreak P_2$.\\}
Let~$G$ be a $(2P_1+\nobreak P_2)$-free $3$-colourable graph.
As~$G$ is $3$-colourable, we can partition~$V(G)$ into three independent sets $A$, $B$,~$C$.
If $\oct(G)\leq 1$, then $\ioct(G)=\oct(G)$.
Hence, we may assume that $\oct(G)\geq 2$.
For contradiction, we assume that $\ioct(G)\geq \oct(G)+\nobreak 1$.

As $\oct(G)\geq 2$, it follows that~$G$ is not bipartite.
Hence, $A$, $B$,~$C$ are non-empty and moreover, there exists an edge between each pair of these sets.
We claim that every subgraph of~$G$ induced by two vertices in one set in $\{A,B,C\}$ and two vertices in another set in $\{A,B,C\}$ has at least one edge.
This can be seen as follows.
For contradiction, suppose that there exist two vertices $a_1$, $a_2$ of~$A$ and two vertices $b_1$, $b_2$ of~$B$, such that $\{a_1,a_2,b_1,b_2\}$ is an independent set.
As $G[A\cup B]$ contains an edge, there exist adjacent vertices $x\in A$ and $y\in B$.
As $\{a_1,a_2,b_1,b_2\}$ is an independent set, it follows that $x\notin \{a_1,a_2\}$ or $y\notin \{b_1,b_2\}$
Assume without loss of generality that $x\notin \{a_1,a_2\}$.
Then~$y$ must be adjacent to least one of $a_1,a_2$, as otherwise $\{a_1,a_2,x,y\}$ would induce~$2P_1+\nobreak P_2$.
Assume without loss of generality that~$y$ is adjacent to~$a_1$.
Then $y\notin \{b_1,b_2\}$, as $\{a_1,a_2,b_1,b_2\}$ is an independent set.
However, now $\{b_1,b_2,a_1,y\}$ induces~$2P_1+\nobreak P_2$, a contradiction.
Hence, the claim holds.

Now let~$S$ be a minimum odd cycle transversal of~$G$.
Let $A'=A\setminus S$, $B'=B\setminus S$ and $C'=C\setminus S$.
First suppose that each of $A'$, $B'$,~$C'$ contains at least three vertices.
As~$S$ is an odd cycle transversal, $G-S=G[A'\cup B'\cup C']$ is bipartite.
Hence, $A'\cup B'\cup C'$ can be partitioned into two independent sets~$X$ and~$Y$.
As each of $A'$, $B'$,~$C'$ has at least three vertices, one of $X$,~$Y$, say~$X$, contains two vertices of at least two sets of $A'$, $B'$,~$C'$.
By the above claim, $G[X]$ contains an edge, a contradiction.
Hence, we may assume without loss of generality that $|A'|\leq 2$, so $|S\cap A|\geq |A|-2$.
Since~$A$ is an independent odd cycle transversal, it follows that $|A|\geq \ioct(G)$.
Hence, we obtain $$|S\cap A| \geq |A|-2\geq \ioct(G)-2 \geq \oct(G)-1=|S|-1.$$
As~$S$ is not an independent set, this implies that $|S\cap A| = |A|-2=|S|-1$, and thus~$S$ contains exactly one vertex from $B\cup C$, say, $S\cap B=\{b\}$ (and thus $S\cap C=\emptyset$).
As $|S\cap A|=|A|-2$, it follows that $|A'|=|A\setminus S|=2$.
Let $A'=\{a',a''\}$.
Since $\ioct(G) > \oct(G) \geq 2$, and $B$~and~$C$ are odd cycle transversals, it follows that $|B|,|C| \geq 3$.

Suppose that $|B|\geq 4$.
As $\ioct(G) > \oct(G)$, the independent set $(A \cap S) \cup \{ a'' \}$ is not an odd cycle transversal.
Consequently, $G - ((A \cap S) \cup \{ a'' \})=G[\{a'\}\cup B\cup C]$ is not bipartite.
As $G[B\cup C]$ is bipartite, this means that $G - ((A \cap S) \cup \{ a'' \})$ has an odd cycle containing~$a'$.
This implies that~$a'$ has a neighbour in both~$B$ and~$C$.
As~$G$ is $(2P_1+\nobreak P_2)$-free and $|B|\geq 4$, this means that~$a'$ has at least three neighbours in~$B$, and thus at at least two neighbours $b_1$,~$b_2$ in $B \setminus \{ b \}$.
As $|C|\geq 3$, we find for the same reason that~$a'$ has at least two neighbours $c_1$,~$c_2$ in~$C$.
By our previous claim, there is at least one edge with one end-vertex in $\{b_1,b_2\}$, say~$b_1$, and the other one in $\{c_1,c_2\}$, say~$c_1$.
However, now $\{a',b_1,c_1\}$ induces a~$C_3$ in $G- ((A \cap S) \cup \{ b \})$, contradicting the fact that $S=(A \cap S) \cup \{ b \}$ is an odd cycle transversal.
We conclude that $|B|=3$, say $B=\{b,b',b''\}$.

As $3=|B|\geq \ioct(G)>\oct(G)=|S|\geq 2$, we find that $|S| = 2$.
Hence $|S \cap A|=1$ and $|A|=|S|+2=3$, say $S=\{a,b\}$ and $A=\{a,a',a''\}$.
In particular, both~$a'$ and~$a''$ are adjacent to at least one vertex of~$B$ and to at least one vertex of~$C$, as otherwise $\{a,a''\}$ or $\{a,a'\}$, respectively, is an independent odd cycle transversal of~$G$ of size~$2$.

By our claim, there exists at least one edge between a vertex of $\{a',a''\}$, say~$a'$, and a vertex of $\{b',b''\}$, say~$b'$.
Since $\{ b, b'' \}$ is not an odd cycle transversal and $G[A\cup C]$ is bipartite, $b'$ belongs to an odd cycle in $G-\{b,b''\}=G[A\cup C\cup \{b'\}]$.
This implies that~$b'$ has a neighbour in~$C$.
This, together with the fact that~$G$ is $(2P_1+\nobreak P_2)$-free, implies that~$b'$ is adjacent to all but at most one vertex in~$C$.
Recall that~$a'$ also has a neighbour in~$C$.
By the same argument, this means that~$a'$ is adjacent to all but at most one vertex in~$C$.
Since $|C| \geq 3$, we find that~$a'$ and~$b'$ have a common neighbour~$c\in C$.
Then, as~$a'$ and~$b'$ are adjacent, $\{a',b',c\}$ induces a~$C_3$ in $G-\{a,b\}$, contradicting the fact that $S=\{a,b\}$ is an odd cycle transversal of~$G$.
We conclude that $\ioct(G)=\oct(G)$.

\subparagraph*{\ref{t-oct:ii}: ``$\Rightarrow$''.}
Now suppose that $H=2P_2$ or~$H$ is not a subgraph of~$P_4$.
By~\ref{t-oct:i} we may assume that~$H$ is an induced subgraph of~$K_{1,4}^+$ or $K_{1,4}+\nobreak sP_1$ for some $s\geq 0$, which in particular implies that  $H \neq 2P_2$.
Recall that $H\notin  \{K_{1,3},K_{1,3}^+,2P_1+\nobreak P_3\}$.
This means that~$H$ contains an induced subgraph from the set $\{K_{1,4}, K_{1,3}+\nobreak P_1,5P_1,3P_1+\nobreak P_2\}$.

\begin{figure}\label{oct1}
\begin{center}
\begin{tikzpicture}[xscale=0.8, yscale=0.8]
\draw (1,0) -- (-1,0) (1,0) -- (3,1.5) -- (2.2,2.3) -- (1,0) (1,0) -- (3,-1.5) -- (2.2,-2.3) -- (1,0) 
(-1,0) -- (-3,1.5) -- (-2.2,2.3) -- (-1,0) (-1,0) -- (-3,-1.5) -- (-2.2,-2.3) -- (-1,0);
\draw[fill=white] (1,0) circle [radius=3pt] (3,1.5) circle [radius=3pt] (3,-1.5) circle [radius=3pt] (-3,1.5) circle [radius=3pt] 
(-2.2,2.3) circle [radius=3pt] (-3,-1.5) circle [radius=3pt] (-2.2,-2.3) circle [radius=3pt];
\draw[fill=black]  (-1,0) circle [radius=3pt] (2.2,-2.3) circle [radius=3pt] (2.2,2.3) circle [radius=3pt]; 
\node[above right] at (-1,0) {$u$};
\node[above left] at (1,0) {$v$};
\node[left] at (2.2,2.3) {$u_1$};
\node[left] at (2.2,-2.3) {$u_2$};
\end{tikzpicture}
\end{center}
\caption{A $(K_{1,4},K_{1,3}+\nobreak P_1,5P_1)$-free $3$-colourable graph~$G$ with~$\ioct(G)=\oct(G)+\nobreak 1$.}\label{f-counter}
\end{figure}

First consider the graph~$G$ from \figurename~\ref{f-counter}.
It is readily seen that~$G$ is $(K_{1,4}, K_{1,3}+\nobreak P_1,5P_1)$-free and $3$-colourable.
Moreover, $\{u,v\}$ is a minimum odd cycle transversal, so $\oct(G)=2$, while $\ioct(G)=3$ (for instance, $\{u,u_1,u_2\}$ is a minimum independent odd cycle transversal of~$G$).
Now consider the graph~$G$ from \figurename~\ref{f-counter2}.
It is readily seen that~$G$ is $(3P_1+\nobreak P_2)$-free and $3$-colourable.
Moreover, $\oct(G)=2$, as $\{u,v\}$ is a minimum odd cycle transversal, while $\ioct(G)=3$ (for instance, $\{u,u_1,u_2\}$ is a minimum independent odd cycle transversal of~$G$).
This completes the proof of~\ref{t-oct:ii}.

\begin{figure}
\begin{center}
\begin{tikzpicture}[xscale=0.8, yscale=0.8]
\draw (1,0) -- (2,1) -- (4,1) -- (1,0) -- (2,-1) -- (4,-1) -- (1,0) (2,1) -- (2,-1) (4,1) -- (4,-1)
(-1,0) -- (-2,1) -- (-4,1) -- (-1,0) -- (-2,-1) -- (-4,-1) -- (-1,0) (-2,1) -- (-2,-1) (-4,1) -- (-4,-1) (1,0) -- (-1,0);
\draw[fill=white] 
(1,0) circle [radius=3pt] (4,1) circle [radius=3pt] (2,-1) circle [radius=3pt] (-4,1) circle [radius=3pt] (-2,1) circle [radius=3pt] (-2,-1) circle [radius=3pt] (-4,-1) circle [radius=3pt];
\draw[fill=black] (-1,0) circle [radius=3pt] (2,1) circle [radius=3pt] (4,-1) circle [radius=3pt];
\node[above right] at (-1,0) {$u$};
\node[above left] at (1,0) {$v$};
\node[left] at (2,1) {$u_1$};
\node[right] at (4,-1) {$u_2$};
\end{tikzpicture}
\end{center}
\caption{A $(3P_1+\nobreak P_2)$-free $3$-colourable graph~$G$ with $\ioct(G)=\oct(G)+\nobreak 1$.}\label{f-counter2}
\end{figure}

\medskip
\noindent
We now consider Statements~\ref{t-oct:1}--\ref{t-oct:6}.
Statement~\ref{t-oct:1} immediately follows from Statement~\ref{t-oct:ii}, whereas Lemma~\ref{lem:oct-K13} and  Corollary~\ref{cor:oct-K13+sP1} imply Statements~\ref{t-oct:5} and~\ref{t-oct:6}, respectively.
It remains to prove Statements~\ref{t-oct:2}--\ref{t-oct:4}.

\subparagraph*{\ref{t-oct:2}.}
Let $H=sP_1$ for some $s\geq 5$.
Let~$G$ be an $sP_1$-free $3$-colourable graph.
If $\oct(G)\leq 1$, then $\ioct(G)=\oct(G)$.
Hence, we may assume that $\oct(G)\geq 2$.
As~$G$ is $3$-colourable, $V(G)$ can be partitioned into three independent sets $V_1$, $V_2$,~$V_3$.
Hence, $V_1$ is an independent odd cycle transversal.
As~$G$ is $sP_1$-free, $V_1$ has size at most~$s-1$.
This means that $\ioct(G)\leq s-1 = 2 + s-3 \leq \oct(G)+s-3$.

\newpage
\subparagraph*{\ref{t-oct:3} and~\ref{t-oct:4}.}
Statements~\ref{t-oct:3} and~\ref{t-oct:4} follow from Lemma~\ref{lem:oct-K_1r-to-K13+sP_1} after observing that $\ioct(G)=\oct(G)$ holds for every $K_{1,r}$-free $3$-colourable graph $G$ with $r\in\{1,2\}$ (this also follows from~\ref{t-oct:1}).
This completes the proof.\qedllncs
\end{proof}

\section{Conclusions}\label{s-con}
To develop an insight into the price of independence for classical concepts, we have investigated whether or not the size of a minimum independent vertex cover, feedback vertex set or odd cycle transversal is bounded in terms of the minimum size of the not-necessarily-independent variant of each of these transversals for $H$-free graphs (that have such independent transversals).
While we note that the bounds we give in some of our results are tight, in this paper we were mainly concerned with obtaining dichotomy results on whether there is a bound, rather than trying to find exact bounds.
We will now discuss some open problems resulting from our work.

We fully classified for which graphs~$H$ the class of $H$-free bipartite graphs is $\ivc$-bounded and for which graphs~$H$ the class of $H$-free near-bipartite graphs is $\ifvs$-bounded.
By Lemma~\ref{lem:oct-K_1r-to-K13+sP_1}, for $r,s \geq 1$ the class of $K_{1,r}$-free $3$-colourable graphs is $\ioct$-bounded if and only if the class of $(K_{1,r}+\nobreak sP_1)$-free $3$-colourable graphs is $\ioct$-bounded.
Therefore, Theorem~\ref{thm:indep-oct} (and similarly, Theorem~\ref{t-oct}\ref{t-oct:i}) leaves three open cases with respect to $\ioct$-boundedness, as follows:

\begin{openproblem}\label{oprob:oct}
Determine whether the class of $H$-free $3$-colourable graphs is $\ioct$-bounded when~$H$ is:
\begin{enumerate}
\item $K_{1,4}$ (or equivalently $K_{1,4}+\nobreak sP_1$ for any $s \geq 1$),
\item $K_{1,3}^+$ or
\item $K_{1,4}^+$.
\end{enumerate}
\end{openproblem}

\noindent
We fully classified for which graphs~$H$ the class of $H$-free bipartite graphs is $\ivc$-identical.
However, we have a few remaining cases for the notions of being $\ifvs$-identical (one open case) and being $\ioct$-identical (three open cases):

\begin{openproblem}\label{o-1}
Does there exist a $K_{1,3}$-free near-bipartite graph~$G$ with $\ifvs(G)>\fvs(G)$?
\end{openproblem}

\begin{openproblem}\label{o-2}
For $H\in \{K_{1,3},K_{1,3}^+,2P_1+\nobreak P_3\}$, does there exist an $H$-free $3$-colourable graph~$G$ with $\ioct(G)>\oct(G)$?
\end{openproblem}
In particular, we note that the $H=K_{1,3}^+$ case is the only one open for both Open Problem~\ref{oprob:oct} and Open Problem~\ref{o-2}.
We also note that, in contrast to the class of $(2P_1+\nobreak P_3)$-free $3$-colourable graphs (see, for example,~\cite{BGPS12}), the classes of $K_{1,3}$-free near-bipartite graphs and $K_{1,3}$-free $3$-colourable graphs are \NP-complete to recognize.
This follows from the results that the problems of deciding near-bipartiteness~\cite{BDFJP17b} and deciding $3$-colourability~\cite{Ho81} are \NP-complete for line graphs, which form a subclass of $K_{1,3}$-free graphs.

\medskip
\noindent
As results for the price of connectivity implied algorithmic consequences for connected transversal problems~\cite{CHJMP18,JPP18}, it is natural to ask whether our results for the price of independence have similar consequences.
The problems {\sc Independent Vertex Cover}, {\sc Independent Feedback Vertex Set} and {\sc Independent Odd Cycle Transversal} ask to determine the minimum size of the corresponding independent transversal.
The first problem is readily seen to be polynomial-time solvable.
The other two problems are \NP-hard for $H$-free graphs whenever~$H$ is not a linear forest~\cite{BDFJP17b}, just like their classical counterparts {\sc Feedback Vertex Set}~\cite{MPRS12,Mu17} and {\sc Odd Cycle Transversal}~\cite{EHK98} (see also~\cite{KKTW01,KL07}).
The complexity of these four problems restricted to $H$-free graphs is still poorly understood when~$H$ is a linear forest.
Our results suggest that it is unlikely that we can obtain polynomial algorithms for the independent variants based on results for the original variants, as the difference between~$\ifvs(G)$ and~$\fvs(G)$ and between~$\ioct(G)$ and~$\oct(G)$ can become unbounded quickly.

\bibliography{poireferences}

\begin{thebibliography}{10}

\bibitem{Al04}
Vladimir~E. Alekseev.
\newblock Polynomial algorithm for finding the largest independent sets in
  graphs without forks.
\newblock {\em Discrete Applied Mathematics}, 135(1--3):3--16, 2004.

\bibitem{BHKP17}
R\'emy Belmonte, Pim van~'t Hof, Marcin Kami\'nski, and Dani\"el Paulusma.
\newblock The price of connectivity for feedback vertex set.
\newblock {\em Discrete Applied Mathematics}, 217(Part 2):132--143, 2017.

\bibitem{BDFJP17}
Marthe Bonamy, Konrad~K. Dabrowski, Carl Feghali, Matthew Johnson, and Dani\"el
  Paulusma.
\newblock Recognizing graphs close to bipartite graphs.
\newblock {\em Proc. MFCS 2017, LIPIcs}, 83:70:1--70:14, 2017.

\bibitem{BDFJP17b}
Marthe Bonamy, Konrad~K. Dabrowski, Carl Feghali, Matthew Johnson, and Dani\"el
  Paulusma.
\newblock Independent feedback vertex set for ${P_5}$-free graphs.
\newblock {\em Algorithmica}, 81(4):1342--1369, 2019.

\bibitem{BGPS12}
Hajo Broersma, Petr~A. Golovach, Dani\"el Paulusma, and Jian Song.
\newblock Updating the complexity status of coloring graphs without a fixed
  induced linear forest.
\newblock {\em Theoretical Computer Science}, 414(1):9--19, 2012.

\bibitem{Camby17}
Eglantine Camby.
\newblock Price of independence for the dominating set problem.
\newblock Manuscript, 2017.

\bibitem{Ca19}
Eglantine Camby.
\newblock Price of connectivity for the vertex cover problem and the dominating
  set problem: Conjectures and investigation of critical graphs.
\newblock {\em Graphs and Combinatorics}, 35(1):103--118, 2019.

\bibitem{CCFS14}
Eglantine Camby, Jean Cardinal, Samuel Fiorini, and Oliver Schaudt.
\newblock The price of connectivity for vertex cover.
\newblock {\em Discrete Mathematics \& Theoretical Computer Science},
  16:207--224, 2014.

\bibitem{CP17}
Eglantine Camby and Fr\"ank Plein.
\newblock A note on an induced subgraph characterization of domination perfect
  graphs.
\newblock {\em Discrete Applied Mathematics}, 217(Part 3):711--717, 2017.

\bibitem{CS14}
Eglantine Camby and Oliver Schaudt.
\newblock The price of connectivity for dominating set: Upper bounds and
  complexity.
\newblock {\em Discrete Applied Mathematics}, 177:53--59, 2014.

\bibitem{CL10}
Jean Cardinal and Eythan Levy.
\newblock Connected vertex covers in dense graphs.
\newblock {\em Theoretical Computer Science}, 411(26--28):2581--2590, 2010.

\bibitem{CHJMP18}
Nina Chiarelli, Tatiana~R. Hartinger, Matthew Johnson, Martin Milani\v{c}, and
  Dani\"el Paulusma.
\newblock Minimum connected transversals in graphs: New hardness results and
  tractable cases using the price of connectivity.
\newblock {\em Theoretical Computer Science}, 705:75--83, 2018.

\bibitem{DJPPZ18}
Konrad~K. Dabrowski, Matthew Johnson, Giacomo Paesani, Dani\"el Paulusma, and
  Viktor Zamaraev.
\newblock On the price of independence for vertex cover, feedback vertex set
  and odd cycle transversal.
\newblock {\em Proc. MFCS 2018, LIPIcs}, 117:63:1--63:15, 2018.

\bibitem{DJPPZ19}
Konrad~K. Dabrowski, Matthew Johnson, Giacomo Paesani, Dani\"el Paulusma, and
  Viktor Zamaraev.
\newblock Independent transversals versus transversals.
\newblock {\em Proc. EuroComb 2019, Acta Mathematica Universitatis Comenianae},
  88(3):585--591, 2019.

\bibitem{EHK98}
Thomas Emden-Weinert, Stefan Hougardy, and Bernd Kreuter.
\newblock Uniquely colourable graphs and the hardness of colouring graphs of
  large girth.
\newblock {\em Combinatorics, Probability and Computing}, 7(04):375--386, 1998.

\bibitem{GS09}
Alexander Grigoriev and Ren{\'e} Sitters.
\newblock Connected feedback vertex set in planar graphs.
\newblock {\em Proc. WG 2009, LNCS}, 5911:143--153, 2010.

\bibitem{HJMP16}
Tatiana~R. Hartinger, Matthew Johnson, Martin Milani\v{c}, and Dani\"el
  Paulusma.
\newblock The price of connectivity for cycle transversals.
\newblock {\em European Journal of Combinatorics}, 58:203--224, 2016.

\bibitem{Ho81}
Ian Holyer.
\newblock The {NP-Completeness} of edge-coloring.
\newblock {\em {SIAM} Journal on Computing}, 10(4):718--720, 1981.

\bibitem{JPP18}
Matthew Johnson, Giacomo Paesani, and Dani\"el Paulusma.
\newblock Connected vertex cover for {$(sP_1+P_5)$-free} graphs.
\newblock {\em Algorithmica}, in press.

\bibitem{KKTW01}
Daniel Kr{\'a}l', Jan Kratochv\'{\i}l, {\relax Zs}olt Tuza, and Gerhard~J.
  Woeginger.
\newblock Complexity of coloring graphs without forbidden induced subgraphs.
\newblock {\em Proc. WG 2001, LNCS}, 2204:254--262, 2001.

\bibitem{KL07}
Vadim~V. Lozin and Marcin Kami\'nski.
\newblock Coloring edges and vertices of graphs without short or long cycles.
\newblock {\em Contributions to Discrete Mathematics}, 2(1), 2007.

\bibitem{MPRS12}
Neeldhara Misra, Geevarghese Philip, Venkatesh Raman, and Saket Saurabh.
\newblock On parameterized independent feedback vertex set.
\newblock {\em Theoretical Computer Science}, 461:65--75, 2012.

\bibitem{Mu17}
Andrea Munaro.
\newblock On line graphs of subcubic triangle-free graphs.
\newblock {\em Discrete Mathematics}, 340(6):1210--1226, 2017.

\bibitem{Ol88}
Stephan Olariu.
\newblock Paw-free graphs.
\newblock {\em Information Processing Letters}, 28(1):53--54, 1988.

\end{thebibliography}

\end{document}